\def\deg{\operatorname{deg}}
\def\ac{{\overline{\rm ac}}}
\def\Supp{\operatorname{Supp}}
\def\moi{\operatorname{moi}}
\def\lct{\operatorname{lct}}
\def\Gal{\operatorname{Gal}}
\def\Sing{\operatorname{Sing}}
\def\Tr{\operatorname{Tr}}
\def\Frob{\operatorname{Frob}}
\def\H{\operatorname{H}}
\def\11{{\mathbf 1}}
\def\AA{{\mathbb A}}
\def\CC{{\mathbb C}}
\def\FF{{\mathbb F}}
\def\NN{{\mathbb N}}
\def\QQ{{\mathbb Q}}
\def\ZZ{{\mathbb Z}}
\def\cA{{\mathcal A}}
\def\cL{{\mathcal L}}
\def\cM{{\mathcal M}}
\def\cO{{\mathcal O}}
\def\cX{{\mathcal X}}
\def\cY{{\mathcal Y}}
\newcounter{dummy} \numberwithin{dummy}{section}
\newtheorem{thm}[dummy]{Theorem}
\newtheorem{lem}[dummy]{Lemma}
\newtheorem{cor}[dummy]{Corollary}
\newtheorem{prop}[dummy]{Proposition}
\newtheorem{conj}[dummy]{Conjecture}
\theoremstyle{definition}
\newtheorem{defn}[dummy]{Definition}
\newtheorem{notation}[dummy]{Notation}
\newtheorem{def-prop}[dummy]{Proposition-Definition}
\newtheorem{def-theorem}[dummy]{Theorem-Definition}
\newtheorem{def-lem}[dummy]{Lemma-Definition}
\theoremstyle{remark}
\newtheorem{remark}[dummy]{Remark}
\theoremstyle{plain}
\numberwithin{equation}{subsection}
\DeclareMathOperator*{\Spec}{Spec}
\newcommand{\ord}{\operatorname{ord}}
\def\VG{\mathrm{VG}}
\newcommand{\RF}{{\rm RF}}
\begin{document}

\author{Kien Huu Nguyen}
\author{Willem Veys}
\address{K. H. Nguyen and W. Veys, KU Leuven, Department of Mathematics, Celestijnenlaan 200B, 3001 Heverlee, Belgium}
\email{kien.nguyenhuu@kuleuven.be, wim.veys@kuleuven.be}

\keywords{\fontsize{8pt}{9pt}\selectfont Exponential sums, Igusa's conjecture, Igusa's local zeta functions, motivic oscillation index, jet polynomials, non-rational singularities, transfer principle,  weights of $\ell$-adic cohomology groups, analytic isomorphism of singularities}

\subjclass[2010]{\fontsize{8pt}{9pt}\selectfont Primary 11L07, 11S40 Secondary 14E15, 14B05, 14F20, 03C98, 11U09, 32B10}

\thanks{\fontsize{8pt}{9pt}\selectfont  The first author is supported by the Fund for Scientific Research - Flanders (Belgium) (FWO) 12X3519N. The second author is partially supported by KU Leuven grant C14/17/083. The authors want to thank Santiago Encinas and Orlando Villamayor for useful discussions.}

\title[On the motivic oscillation index]
{\fontsize{10pt}{11pt}\selectfont On the motivic oscillation index and bound of exponential sums modulo $p^m$ via analytic isomorphisms}

\begin{abstract} Let $f$ be a  polynomial in $n$ variables over some number field and $Z$ a subscheme of affine $n$-space. The notion of motivic oscillation index of $f$ at $Z$  was initiated by Cluckers (2008) and Cluckers-Musta{\c{t}}{\v{a}}-Nguyen (2019). In this paper we elaborate on this notion and raise several questions. The first one is stability                                                                                                                                                                                                                                                                                                                                                                                                                                                     under base field extension; this question is linked to a deep understanding of the density of non-archimedean local fields over which Igusa's local zeta functions of $f$ has a pole with given real part. The second one is around Igusa's conjecture for exponential sums with bounds in terms of the motivic oscillation index.
Thirdly, we wonder if the above questions only depend on the analytic isomorphism class of singularities. 
By using various techniques as the GAGA theorem, resolution of singularities and model theory,
we can answer the third question up to a base field extension. Next, by using a transfer principle between non-archimedean local fields of characteristic zero and positive characteristic, we can link all three questions with a conjecture on weights of $\ell$-adic cohomology groups of Artin-Schreier sheaves associated to jet polynomials. This way, we can answer all questions positively if $f$ is a polynomial \lq of Thom-Sebastiani type\rq\ with non-rational singularities. As a consequence, we prove Igusa's conjecture for arbitrary polynomials in three variables and polynomials with singularities of $A-D-E$ type. In an appendix, we answer affirmatively a recent question of Cluckers-Musta{\c{t}}{\v{a}}-Nguyen (2019) on poles of twisted Igusa's local zeta function of maximal order.
\end{abstract}

\maketitle

\section{Introduction}
Let $f$ be a non-zero polynomial in $\ZZ[x_1,\dots,x_n]$.
For an integer $N>1$, the exponential sum modulo $N$ for $f$ is
$$E_{f,N}:= \dfrac{1}{N}\sum_{x\in (\ZZ/N\ZZ)^n}\exp\left(\frac{2\pi if(x)}{N}\right).$$
Good estimates for such exponential sums have many applications in mathematics. If we write
$1/N=\sum_{1\leq i\leq k}a_i/p_i^{m_i}$,
where $p_i$ is prime, $a_i$ and $m_i$ are integers such that $a_i\neq 0 \mod p_i$ and $m_i>0$ for all $1\leq i\leq k$ (and all $p_i$ are different), then we have
$$E_{f,N}=\dfrac{1}{N}\prod_{i=1}^k E_{a_if,p_i^{m_i}},$$
by using the Chinese remainder theorem. So we can relate the size of $E_{f,N}$ with the size of the local factors $E_{a_if,p_i^{m_i}}$. But in order to give a good estimate for $E_{f,N}$, a uniform estimate for all local factors is still needed. In a series of papers \cite{IgusaInvent,IgusaNago,IgusaNagoya,IgusaCrell,IgusaTokyo,IgusaKyoto,Igusalecture}, Igusa studied the local factor $E_{f,p^m}$ by using as follows a $p$-adic integral.

We fix a prime number $p$ and denote by $\psi_m$ the additive character of $\QQ_p$ given by $\psi_m(x):=\exp(2\pi i x'/p^m)$ with $x'\in (x+p^m\ZZ_p)\cap\ZZ[\frac{1}{p}]$ (which is independent of the choice of $x'$).  Note that $\psi_m$ has conductor $m$, that is, it is trivial on $p^m\ZZ_p$ but non-trivial on $p^{m-1}\ZZ_p$.
 Let $\Phi$ be a Schwartz-Bruhat function on $\QQ_p^n$, that is, a locally constant function with compact support. For a positive integer $m$, the exponential sum modulo $p^m$ associated with $\Phi$ and $f$ is
\begin{equation}\label{exponential integral intro}
E_{f,\QQ_p,\Phi,\psi_m}:=\int_{\QQ_p^n}\Phi(x)\psi_m\left(f(x)\right)|dx|=\int_{\QQ_p^n}\Phi(x)\exp\left(\dfrac{2\pi i f(x)}{p^m}\right)|dx|,
\end{equation}
where $|dx|$ is the normalized Haar measure on $\QQ_p^n$ such that the volume of $\ZZ_p^n$ is $1$. In particular, if $\Phi=\11_{\ZZ_p^n}$, then $E_{f,\QQ_p,\Phi,\psi_m}=E_{f,p^m}$.
Using resolution of singularities, Igusa showed that there exists a constant $\sigma_{f,p,\Phi}=\sigma_{f,\QQ_p,\Phi}\leq +\infty$ which is maximal with respect to the following property:

\noindent
{\em for all real numbers $\sigma<\sigma_{f,p,\Phi}$, we can find a positive constant $c_p$,
depending on $f,p,\Phi,\sigma$ and satisfying
\begin{equation}\label{Igusa bound}
|E_{f,\QQ_p,\Phi,\psi_m}|\leq c_p p^{-m\sigma}
\end{equation}
for all $m\geq 1$.} In fact,  $\sigma_{f,p,\Phi}$ can be computed by poles of $p$-adic Igusa zeta functions associated with $f$ and $\Phi$.
We first recall this notion.

\medskip
We fix an angular component map $ac: \QQ_p^*\to \ZZ_p^*$, being a group homomorphism such that $ac|_{\ZZ_p^*}$ is the identity. Such a map arises by taking a uniformizing parameter $\pi$ for $\ZZ_p$ and putting $ac(x):=x\pi^{-\ord_p(x)}$.

 Let $\chi$ be a multiplicative character of $\ZZ_p^{*}$, i.e., a group homomorphism $\chi:\ZZ_p^{*}\to\CC^{*}$ with finite image. We extend it to $\QQ_p^*$ by setting $\chi:=\chi\circ ac$, using the same notation, and further to $\QQ_p$ by taking $\chi(0):=0$, and call it then a multiplicative character of $\QQ_p$. To these data one associates the Igusa local zeta function
\begin{equation}\label{Igusa zeta function intro}
Z(f,\chi,\QQ_p,\Phi;s):=\int_{\QQ_p^n}\Phi(x)\chi\bigl(ac(f(x))\bigr)|f(x)|^s|dx|.
\end{equation}

Igusa showed that $-\sigma_{f,p,\Phi}$ is either $-\infty$ or the real part of a pole of $Z(f-c,\chi,\QQ_p,\Phi;s)$ for some $\chi$ and some critical value $c$ of $f$. Moreover, all poles of these zeta functions are induced by numerical data associated with a fixed embedded resolution of $\{f-c=0\}$ (see Section \ref{section2} for more details). In fact, it is more conceptual to consider in (\ref{Igusa bound})  a \lq universal\rq\ constant $\sigma_f$, only depending on $f$, and a constant $c_p$, depending on $f$, $\sigma_f$ and $p$.   Igusa conjectured essentially that then one can take such $c_p$ independent of $p$.




\smallskip
Both the integral (\ref{exponential integral intro}) and the zeta function (\ref{Igusa zeta function intro}) can be generalized to polynomials $f$ over some subring $\cO$ of a number field (instead of $\ZZ$). We refer to Section \ref{section2} for all details; here we mention the main ideas.
Take such a ring $\cO$, which is a finitely generated $\ZZ$-algebra. We consider $p$-adic fields $L$, with valuation ring $\cO_L$, maximal ideal $\cM_L$, 
and cardinality $q_L$ of the residue field,
such that $\cO \subset \cO_L$. To non-trivial additive characters $\psi$ of $L$ and Schwartz-Bruhat functions $\Phi$ on $L^n$, we associate analogous integrals $E_{f,L,\Phi,\psi}$.
More precisely, we consider \lq  motivic\rq\ Schwartz-Bruhat functions $\Phi$, associated to a subscheme $Z$ of $\AA_{\cO}^n$, i.e., $\Phi:=\Phi_Z$ is the collection $(\Phi_{L,Z})_{L}$ with $\Phi_{L,Z} = \11_{\{x\in\cO_L^n|\overline{x}\in Z(k_L)\}}$, namely the indicator function of the \lq $\cM_L^n$-neighbourhood of the $k_L$-points of $Z$\rq.
We then denote the corresponding integrals by $E_{f,L,Z,\psi}$.

For multiplicate characters $\chi$  of $\cO_L^{*}$, we consider similarly zeta functions $Z(f,\chi,L,\Phi;s)$ and especially $Z(f,\chi,L,Z;s)$.
The motivic oscillation index of $f$ at $Z$ over a number field $K$, denoted by $\moi_K(f,Z)$  is roughly the minimum of the set $\{-\Re(s_0)\}$, where $s_0$ runs over all non-trivial poles of the zeta functions
$Z(f,\chi,L,Z;s)$, for all $L$ containing $K$ and with big enough residue field characteristic (see the precise definition in Section \ref{section2}).

Elaborating on Igusa's original conjecture, the following formulation was implicitly stated in \cite{CMN}.


\begin{conj}\label{Conjexp}Let $f$ be a non-constant polynomial in $\cO[x_1,\dots,x_n]$ and  $Z$  a subscheme of $\AA^n_\cO$. Let $K$ be a number field containing $\cO$. Then, for all $\sigma<\moi_{K}(f,Z)$, there exist a constant $c_{f,\sigma}$ and a number $M$ such that
$$|E_{f,L,Z,\psi}|\leq c_{f,\sigma} q_L^{-m_{\psi}\sigma}$$
for all local fields $L$ containing $K$, and with residue field characteristic at least $M$, and all characters $\psi$ with conductor $m_\psi\geq 2$.
\end{conj}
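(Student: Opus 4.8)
The plan is to reduce Conjecture~\ref{Conjexp} first to a statement about the non-trivial poles of the local zeta functions $Z(f-c,\chi,L,Z;s)$ that is uniform in $L$, and ultimately to a weight estimate in $\ell$-adic cohomology. For $\psi$ of conductor $m\geq 2$ I would run Igusa's method of stationary phase, decomposing $\{x\in\cO_L^n:\overline x\in Z(k_L)\}$ according to $\ord_L f(x)$: the locus $\ord_L f\geq m$ contributes the volume of that set; the locus $\ord_L f=m-1$ contributes Gauss-sum--type terms governed by the zeta functions with a non-trivial multiplicative character; and on $\ord_L f<m-1$ the integral, after Fourier inversion, concentrates on neighbourhoods of the critical points of $f$ and is governed by the zeta functions of $f-c$ for the critical values $c$, the submersive part of the fibration contributing nothing, which is exactly why the sum decays. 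For $m$ large this assembles $E_{f,L,Z,\psi_m}$ into a finite $\CC$-linear combination of terms $\binom{m}{j}q_L^{m s_0}$, where $s_0$ ranges over the non-trivial poles of the various $Z(f-c,\chi,L,Z;s)$ and the coefficients are, up to bounded factors, the corresponding Laurent coefficients. By the precise definition of the motivic oscillation index in Section~\ref{section2}, for $q_L$ large every such $s_0$ satisfies $-\Re(s_0)\geq\moi_K(f,Z)$; hence for each fixed $L$ the claimed bound holds, and the real content of the conjecture is uniformity of the constant in $L$.

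Secondly, I would obtain that uniformity from a single resolution. Fix $h\colon Y\to\AA^n_{\cO[1/N]}$ that is simultaneously a log resolution of $f$, of each $f-c$ with $c$ a critical value of $f$, and of the subscheme $Z$, with good reduction modulo every prime not dividing $N$. For such primes Denef's formula expresses $Z(f-c,\chi,L,Z;s)$ as an explicit rational function of $q_L^{-s}$ whose numerical data $(\nu_i,N_i)$ are independent of $L$ and whose coefficients are counts of $k_L$-points of varieties, hence $q_L^{O(1)}$; moreover only tame $\chi$ of bounded order contribute. Inserting this into the expansion above and shrinking $\sigma$ strictly below the relevant exponent to absorb the polynomial factors $\binom{m}{j}$, one obtains constants $c_{f,\sigma}$ and $M$, independent of $L$, with $|E_{f,L,Z,\psi_m}|\leq c_{f,\sigma}q_L^{-m\gamma}$ for all $\sigma<\gamma$, where $\gamma=\gamma(f,Z)$ is the smallest candidate exponent $(\nu_i+1)/N_i$ (suitably shifted to account for the indicator function of $Z$) produced by $h$.

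The main obstacle is exactly that $\gamma(f,Z)$ can be strictly smaller than $\moi_K(f,Z)$: a candidate pole of smallest real part need not be an actual pole of $Z(f-c,\chi,L,Z;s)$, since residues may cancel among the exceptional components of $h$ --- possibly in an $L$-dependent way --- and the crude bound above is blind to that cancellation. Passing from the exponent $\gamma$ to $\moi_K$ requires exploiting cancellation inside the exponential sum itself, and here I would invoke the Cluckers--Loeser transfer principle to replace the $p$-adic fields $L$ by equal-characteristic local fields $\FF_q((t))$. Over such a field, $E_{f,\FF_q((t)),Z,\psi_m}$ is, up to an explicit power of $q$, an exponential sum over an affine variety determined by $Z$ of an additive character applied to the $m$-th jet polynomial of $f$; by the Grothendieck--Lefschetz trace formula this equals an alternating sum of traces of Frobenius on $\H^\bullet_c$ of the associated Artin--Schreier sheaf, so by Deligne's Weil~II its modulus is controlled by the weights of these cohomology groups. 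The sharp exponent $\moi_K(f,Z)$ would then follow from a precise conjecture bounding those weights, and it is this weight conjecture --- rather than any of the earlier steps --- that I expect to be the genuine obstacle; I do not expect to prove it for arbitrary $f$.

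Finally, when $f$ is of Thom--Sebastiani type, $f=\sum_j g_j(x^{(j)})$ in disjoint blocks of variables, the $m$-th jet polynomial splits accordingly, the relevant cohomology becomes a tensor product by the K\"unneth formula and is concentrated in few degrees, and if in addition $f$ has non-rational singularities the value $\moi_K(f,Z)$ is pinned down by a log canonical threshold $\leq 1$ in a way that makes the required weight bound readable; in that situation I would prove the weight conjecture, hence Conjecture~\ref{Conjexp}, unconditionally, after a harmless base field extension supplied by the analytic-isomorphism invariance proved in this paper. The stated corollaries --- for arbitrary polynomials in three variables and for polynomials with $A$--$D$--$E$ singularities --- would then be deduced by combining this Thom--Sebastiani case with that analytic-isomorphism invariance and with the previously known instances of the conjecture on the remaining, rational, strata.
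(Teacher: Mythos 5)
Your proposal reproduces the paper's strategy essentially step for step: reduce the conjecture to uniform bounds across local fields; note that the crude bound extracted from a fixed resolution and Denef's formula gives only the worst candidate exponent because of possible cancellation of residues; pass via the Cluckers--Loeser transfer principle to equal-characteristic local fields and reinterpret the exponential sum as a trace-of-Frobenius sum for an Artin--Schreier sheaf on a jet scheme controlled by Deligne's weights; and reduce everything to a weight conjecture (the paper's Conjecture~\ref{highest}) whose validity implies Conjecture~\ref{Conjexp}. The cases you flag as actually provable --- a Thom--Sebastiani decomposition into non-rational factors up to an analytic isomorphism, together with the $A$--$D$--$E$ and three-variable corollaries --- are exactly those the paper establishes unconditionally in Theorem~\ref{mainthm2} and Corollaries~\ref{ADE} and~\ref{3variables}, so this is a faithful reconstruction of the paper's approach, correctly acknowledging that the general conjecture remains open.
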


In case $f$ has non-rational singularities in every open neighbourhood of $Z(\CC)$, Conjecture \ref{Conjexp} was proven in \cite{CMN}.
Actually, the authors show the analogue of Conjecture \ref{Conjexp}, with $\moi_{K}(f,Z)$ replaced by the log canonical threshold $\lct_{Z}(f)$. More precisely, they prove an equivalence between that analogous conjecture and a certain upper bound for the log canonical threshold, and they prove that upper bound using techniques from the Minimal Model Program. (For the special case of two variables, that upper bound was proven independently in \cite{Veys} by elementary techniques.)
Finally, they establish that $\lct_{Z}(f)=\moi_{K}(f,Z)$ precisely when $f$ has non-rational singularities in every open neighborhood of $Z(\CC)$.
Some variants of conjecture \ref{Conjexp} can be found in \cite{Denef-Sperber, CluckersDuke, CluckersTAMS, WouterKien, Lich1, Lich2, Wright}. We refer to \cite{CMN} for more details and background on Igusa's original motivation.


Among the results in this paper, we prove Conjecture \ref{Conjexp} for all polynomials in three variables. We derive this from some general results of independent interest, for germs of polynomials defined over a number field, that are equivalent under a biholomorphic transformation. We first fix notation, in order to state these results.

\begin{defn}
Let $f$ and $g$ be non-constant polynomials in $n$ variables with coefficients in a number field $K$. Take $P,Q\in K^n$ satisfying $f(P)=g(Q)=0$. Suppose that there exist open neighbourhoods $U$ of $P$ and $V$ of $Q$ in $\CC^n$, respectively, and a biholomorphic function $\theta: U\to V$ and an invertible holomorphic function $v:V\to \CC$ such that $f=(g.v)\circ\theta$ and $\theta(P)=Q$.
 We say that {\em $(f,P)$ and $(g,Q)$ are similar over $K$} and denote this by $(f,P)\sim_K (g,Q)$.
\end{defn}

In Section \ref{section3}, we use the GAGA theorem from \cite{SerreGAGA} and model theory of algebraically closed field of characteristic $0$ to prove our first result.

\begin{thm}[Main theorem 1]\label{mainthm1} Let $f$ and $g$ be non-constant polynomials in $n$ variables with coefficients in a number field $K$, and let $P,Q\in K^n$ satisfy $f(P)=g(Q)=0$. Suppose that $(f,P)$ and $(g,Q)$ are similar over $K$. Then there exists a natural number $M$ and a finite field extension $K'$ of $K$, such that for all non-archimedean local fields $L$ containing $K'$ and with residue field characteristic at least $M$ we have
$$Z(f,\chi,L,\Phi_{L,P},s)=Z(g,\chi,L,\Phi_{L,Q},s)$$
for all multiplicative characters $\chi:L\to \CC^*$, and
 $$E_{f,L,P,\psi}=E_{g,L,Q,\psi}$$
 for all additive character $\psi$ of conductor at least $1$.
\end{thm}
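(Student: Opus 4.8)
The plan is to move, in three steps, from the transcendental hypothesis (a biholomorphism over $\CC$) to an algebraic change of coordinates defined over a number field, and then to transfer the latter into the $p$-adic integrals. \emph{Step 1 (passage to formal similarity).} Translate $P$ and $Q$ to the origin. The biholomorphism $\theta$ induces an isomorphism of convergent power series rings $\CC\{y\}\to\CC\{x\}$, and the identity $f=(g\cdot v)\circ\theta$ with $v$ invertible shows that this isomorphism carries the ideal $(g)$ onto $(f)$; more precisely it sends $g$ to $\hat w\cdot f$ with $\hat w:=(v\circ\theta)^{-1}$ a unit. Completing, we obtain a continuous $\CC$-algebra isomorphism $\phi\colon\CC[[y]]\to\CC[[x]]$ given by power series $\hat\theta_i:=\phi(y_i)\in\mathfrak m$ with $\det\bigl(\partial_j\hat\theta_i(0)\bigr)\neq 0$, together with a unit $\hat w\in\CC[[x]]^\times$, satisfying $f(x)=\hat w(x)\,g\bigl(\hat\theta_1(x),\dots,\hat\theta_n(x)\bigr)$. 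It therefore suffices to produce such formal data over a finite extension $K'$ of $K$: after adjoining finitely many denominators one can then spread it out over the ring $\cO_{K',T}$ of $T$-integers for a suitable finite set of places $T$, so that $(\hat\theta_i)$ defines an \'etale change of coordinates sending $P$ to $Q$ (indeed an isomorphism of the local rings there) and $\hat w$ is a regular unit.

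\emph{Step 2 (descent to a number field).} A priori the existence of $(\hat\theta_i,\hat w)$ solving $f(x)=\hat w(x)\,g(\hat\theta(x))$ in $\CC[[x]]$, under the open conditions $\hat w(0)\neq 0$ and $\det(\partial_j\hat\theta_i(0))\neq 0$, is an infinitary statement; I would make it finitary in two complementary ways. If $f$ has an isolated singularity at $P$, finite determinacy (Mather--Yau, Tougeron) reduces contact equivalence to the equivalence of the $N$-jets of $f$ and $g$ for an explicit $N$, that is, to the existence of a point of a finite-dimensional algebraic group (jets of coordinate changes times jets of units) conjugating one jet to the other. In general, Artin approximation (equivalently, Greenberg's theorem) replaces $\phi$ by an \emph{algebraic} power-series solution over $\CC$, so that $\hat\theta_i,\hat w\in\CC\langle x\rangle$ may be taken with a priori bounded degrees of their minimal polynomials over $\CC[x]$. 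Either way, ``$(f,P)$ and $(g,Q)$ are formally similar'' is converted into the non-emptiness over $\CC$ of an affine scheme $S$ of finite type over $K$, whose $K$-structure comes from the coefficients of $f$ and $g$; here the GAGA comparison of analytic and algebraic coherent sheaves is what lets one see that the biholomorphism, a priori living only on a small polydisc, really provides a $\CC$-point of the algebraic model $S$. Since $S$ is of finite type over $K\subseteq\overline{\QQ}$ and $S(\CC)\neq\emptyset$, the Nullstellensatz --- equivalently, model completeness of the theory of algebraically closed fields of characteristic $0$, i.e.\ the Lefschetz principle --- gives $S(\overline{\QQ})\neq\emptyset$, and any such point has coordinates in a number field $K'$. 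This yields the formal, hence (by Step 1) the \'etale, similarity over $K'$.

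\emph{Step 3 (transfer to the integrals).} Fix $M$ large enough to contain all residue characteristics above $T$ and to clear the finitely many denominators used above, and let $L\supseteq K'$ be non-archimedean with residue characteristic $\geq M$. Then $P,Q\in\cO_L^n$ reduce to smooth points, and by Hensel's lemma over the complete ring $\cO_L$ the series $\hat\theta$ converges on, and defines an analytic isomorphism of, the residue disc $P+\cM_L^n$ onto $Q+\cM_L^n$; its Jacobian $\det D\hat\theta$ is a unit there, so $y=\hat\theta(x)$ is measure preserving and carries $\Phi_{L,P}$ to $\Phi_{L,Q}$. Substituting $y=\hat\theta(x)$ in the defining integral of $Z(f,\chi,L,\Phi_{L,P},s)$ and using $f=\hat w\cdot(g\circ\hat\theta)$ with $\hat w$ a unit --- so that $|f(x)|=|g(\hat\theta(x))|$, and, after normalizing the linear part of $\hat\theta$ so that $\hat w$ is suitably trivial near the base point, the angular component and the additive character are left unchanged --- gives the asserted equality of the zeta functions; the same substitution in the exponential integral gives $E_{f,L,P,\psi}=E_{g,L,Q,\psi}$.

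\emph{Expected main obstacle.} The crux is Step 2: turning ``analytically (hence formally) similar over $\CC$'' into a genuinely finite-type condition over $K$ to which the Lefschetz principle applies. Finite determinacy disposes of isolated singularities cleanly, but the general, possibly non-isolated, case forces the combined use of Artin approximation and an effective bound on the complexity of the algebraic solution, kept compatible --- this is where GAGA enters --- with the transcendental polydisc on which $\theta$ is defined. A smaller but real technical point in Step 3 is to pin down exactly how the residual unit $\hat w$ (equivalently, the twist it induces on $\chi$ and on $\psi$) affects the integrals, and to absorb it entirely into the choice of coordinate change.
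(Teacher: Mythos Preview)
Your strategy---produce a single analytic (formal, then algebraic) change of coordinates $\hat\theta$ and a unit $\hat w$ with $f=\hat w\cdot(g\circ\hat\theta)$, descend to a number field, and substitute directly into the integrals---is genuinely different from the paper's, but it has a real gap in Step~3 that you yourself flag and do not resolve: the unit $\hat w$ cannot in general be ``absorbed''. After the substitution $y=\hat\theta(x)$ the zeta integral becomes $\int \chi(ac(\hat w))\,\chi(ac(g))\,|g|^s\,dy$; even restricting to conductor-$1$ characters (which is legitimate by Lemma~\ref{zerofunction}) you only get $Z(f,\chi)=\chi(\overline{\hat w(0)})\,Z(g,\chi)$, and $\chi(\overline{\hat w(0)})$ has no reason to be $1$ for all $\chi$ with $Z(g,\chi)\neq 0$ unless you already know, from a resolution, which orders $d$ occur and then adjoin the relevant $d$-th roots of $\hat w(0)$ to $K'$. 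Worse, for the exponential sum your substitution yields $\int \psi(\hat w(y)\,g(y))\,dy$, and since $\psi$ is additive there is no way to factor out the unit: contact equivalence simply does not imply $E_f=E_g$ via a direct change of variables, and contact equivalence does not in general upgrade to right equivalence over $\overline{\QQ}$.

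The paper avoids this by never attempting a global change of variables. Instead it takes functorial embedded resolutions $(Y_1,h_1)$ and $(Y_2,h_2)$ compatible with the biholomorphism (Lemma~\ref{anaiso}), uses GAGA on the \emph{projective} exceptional fibres $h_i^{-1}(0)$ to algebraize the induced isomorphism between them (Lemma~\ref{overC}), and crucially shows that on each stratum the local units $u_a$ and $v_{\varphi(a)}$ differ by a $d_a$-th power with $d_a=\gcd\{N_i:i\in T_a\}$; this is then descended to $\overline K$ by quantifier elimination (Lemma~\ref{overK}). In Denef's formula (Theorem~\ref{goodreduction}) only strata with $d\mid N_i$ for all $i\in I$ contribute to $Z(\,\cdot\,,\chi)$ for $\chi$ of order $d$, so the $d_a$-th-power discrepancy is killed by $\chi$ and the $c_{I,\Phi,\chi}$ match; the equality of exponential sums then follows from the equality of all $Z(\,\cdot\,,\chi)$ via Proposition~\ref{zetaex}. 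Note also that your invocation of GAGA in Step~2 is misplaced: GAGA says nothing about germs on a polydisc; the paper applies it to the projective exceptional fibres, where it is the correct tool.
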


We would like to conclude from this that then $\moi_K(f,P)=\moi_K(g,Q)$, in order to show that Conjecture \ref{Conjexp} holds for $(f,P)$ if and only if it holds for $(g,Q)$. We are however obstructed by an intriguing question about the motivic oscillation index.

When $Z$ is a point $P$, the notion $\moi_K(f,P)$ is an analogue in the ad\`elic context of the notions oscillation index and complex oscillation index of $f$ at $P$ in \cite{Arnold.G.V.II}.
 By construction, the motivic oscillation index seems to be defined over the algebraic closure of $K$ rather than over $K$ itself. On the other hand, if $f$ has  non-rational singularities in every open neighborhood of $Z(\CC)$, then, for all finite extensions $K'$ of $K$, we have that $\moi_{K}(f,Z)=\moi_{K'}(f,Z)$ is the log canonical threshold of $f$ at $Z$.
Hence we may expect that the following conjecture holds.

\begin{conj}\label{Conj1}  If $f$ is a non-constant polynomial in $n$ variables  with coefficients in a number field $K$ and $Z$ a subscheme of $\AA^n_K$,  then, for any finite extension $K'$ of $K$, we have that $\moi_K(f,Z)=\moi_{K'}(f,Z)$.
\end{conj}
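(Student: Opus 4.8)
\emph{A strategy toward Conjecture~\ref{Conj1}.} The inequality $\moi_{K'}(f,Z)\ge\moi_{K}(f,Z)$ is automatic, since every non-archimedean local field containing $K'$ also contains $K$, so the family of zeta functions entering $\moi_{K'}(f,Z)$ is a subfamily of those entering $\moi_{K}(f,Z)$ and can only have fewer poles. The plan is to prove the reverse inequality, for which it suffices to show: if $Z(f,\chi,L,Z;s)$ has a non-trivial pole with real part $-\sigma$ for some $L\supseteq K$ of large residue characteristic and some character $\chi$ of $\cO_L^{\times}$, then $Z(f,\chi',L',Z;s)$ has a pole with the same real part $-\sigma$ for some $L'\supseteq K'$ and some $\chi'$.

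First I would fix an embedded resolution $h\colon Y\to\AA^n$ of $\{f=0\}$ adapted to $Z$, with numerical data $(N_i,\nu_i)$ along its components, defined over the ring of integers $\cO_K$ (possible after inverting finitely many primes). By Denef's explicit formula for $p$-adic Igusa zeta functions there is a bound $M_0$ so that, for every $L$ of residue characteristic $\ge M_0$ and every tame character $\chi$, the function $Z(f,\chi,L,Z;s)$ is an explicit $\ZZ$-linear combination, indexed by the locally closed strata $E_I^{\circ}$ of $h$ and by characters, of elementary rational functions of $q_L^{-s}$; the candidate poles are the numbers $-\sigma$ dictated by the $(N_i,\nu_i)$, and whether a candidate is an actual pole is governed by a non-cancellation condition $C_{\sigma}$ on the numerators. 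The two features I would exploit are that $C_{\sigma}$ depends on $L$ only through the residue field $\FF_{q_L}$, and that it takes the form ``there is a character of $\FF_{q_L}^{\times}$ of order dividing the relevant $N_i$ (a congruence on $q_L$), \emph{and} a certain character sum over $\FF_{q_L}$ attached to a stratum $E_I^{\circ}$ is nonzero''. In particular the set of $L$ realizing the pole $-\sigma$ is essentially the set of local fields whose residue field satisfies $C_{\sigma}$, which is the ``density of local fields over which $f$ has a pole with real part $-\sigma$'' referred to in the introduction.

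Next I would compare the residue fields available over $K$ and over $K'$. Because $h$, and hence each $E_I^{\circ}$ and each associated character sum, is already defined over $\cO_K\subseteq\cO_{K'}$, transitivity of base change shows that for a prime $\mathfrak P$ of $K'$ over a prime $\mathfrak p$ of $K$ over $p\gg 0$, reducing a stratum modulo $\mathfrak P$ and then enlarging the residue field is the same operation as reducing it modulo $\mathfrak p$ and then enlarging; in particular $Z(f,\chi,L',Z;s)$ for $L'\supseteq K'$ with residue field $\FF_{p^{d'}}$ coincides with $Z(f,\chi,L,Z;s)$ for a suitable $L\supseteq K$ with the same residue field. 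Hence the pole $-\sigma$ is realized over $K$ above $p$ exactly when $C_{\sigma}(\FF_{p^{d}})$ holds for some $d$ that is a multiple of the residue degree of some prime of $K$ over $p$, and similarly for $K'$; and since $K\subseteq K'$, every residue degree of a prime of $K$ over $p$ divides the residue degree of some prime of $K'$ over $p$. Fixing such a pair of primes, the comparison collapses to the single implication
\begin{equation*}
C_{\sigma}(\FF_{q})\ \Longrightarrow\ C_{\sigma}(\FF_{q^{m}})\qquad\text{for all }m\ge 1
\end{equation*}
(it would be enough to have it for $m$ in a suitable arithmetic progression): the \emph{stability of the non-cancellation condition under residue field extension}. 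Its congruence part is trivially stable; the remaining part says that non-vanishing of the relevant character sum over $\FF_{q}$ propagates to $\FF_{q^{m}}$, and by the Grothendieck-Lefschetz trace formula this character sum is an alternating sum of traces of $\Frob_{q}$ on the $\ell$-adic cohomology of the Artin-Schreier sheaf attached to the jet polynomial of $E_I^{\circ}$.

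The hard step is exactly this last one. Non-vanishing over $\FF_{q}$ can be destroyed over $\FF_{q^{m}}$, since eigenvalues that are not a root of unity times the expected power of $q$ may conspire to cancel once raised to the $m$-th power, and conversely a vanishing over a small field can be an accident; ruling both out requires knowing the archimedean absolute values, and the precise multiplicative shape, of the Frobenius eigenvalues on these $\ell$-adic cohomology groups. That is precisely the conjecture on weights of $\ell$-adic cohomology of Artin-Schreier sheaves of jet polynomials to which this paper reduces Conjectures~\ref{Conjexp} and~\ref{Conj1} (and the analytic-isomorphism question): granting it, $C_{\sigma}$ is cut out by a Chebotarev-type condition, hence stable under residue field extension, and $\moi_{K}(f,Z)=\moi_{K'}(f,Z)$ follows. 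Unconditionally, the required weight statement is established here when $f$ has non-rational singularities in a neighbourhood of $Z(\CC)$, where both sides equal $\lct_{Z}(f)$ and the statement was already known, and, via a Thom-Sebastiani computation, when $f$ is of Thom-Sebastiani type with non-rational singularities, which also underlies the cases of arbitrary polynomials in three variables and of $A$-$D$-$E$ singularities; the general case is the remaining obstacle.
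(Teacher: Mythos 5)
The statement is a conjecture; the paper does not prove it, but shows (Proposition~\ref{Implication}) that it follows from the weight conjecture~\ref{highest}, and establishes it unconditionally only in the non-rational/Thom--Sebastiani situations of Theorems~\ref{nonrational}--\ref{mainthm2}. You correctly present your argument as a strategy, correctly note the trivial inequality $\moi_{K}\le\moi_{K'}$, and correctly identify the essential missing input as control of the archimedean sizes of Frobenius eigenvalues on $\ell$-adic cohomology, i.e., a weight statement. At this high level your strategy agrees with the paper's.

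However, the intermediate steps you propose are not the paper's, and one of them contains a concrete confusion that breaks the reduction as you state it. You route through Denef's explicit formula and assert that the character sums $c_{I,\Phi,\chi}$ over the resolution strata $\overset{\circ}{\overline{E}}_I$ are traces of $\Frob_q$ on cohomology of ``the Artin--Schreier sheaf attached to the jet polynomial of $E_I^\circ$.'' Neither phrase applies: those coefficients involve the multiplicative character $\Omega_\chi=\chi\circ\overline{u}$, so by Grothendieck--Lefschetz they are traces on Kummer-type sheaves over the strata, and jet polynomials never appear in Denef's formula. The Artin--Schreier sheaves $\cL_{f,b,\Psi}$ of jet polynomials $b_0f_0+b_{m-1}f_{m-1}$ that the paper studies live on the jet schemes $Z^{(m-1)}$ (not on resolution strata) and arise from the positive-characteristic exponential sum formula of Proposition~\ref{simplified expsum}, not from Denef's formula. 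Because of this mismatch, your Conjecture~\ref{highest} (about jet-scheme cohomology) does not in fact control your non-cancellation condition $C_\sigma$ (about Kummer sums over strata), so ``granting the weight conjecture, $C_\sigma$ is Chebotarev-type, hence stable'' does not follow. The paper avoids tracking poles via $C_\sigma$ altogether: it uses the weight bound to estimate $|E_{f,L,Z,\psi}|$ over $L\in\cL'_{K,M_m}$, transfers this to $\cL_{K,M}$ (Proposition~\ref{transfer}, Corollary~\ref{transfer1}), and then bounds $\sigma_{f,L,\Phi_{L,Z}}$ from below via Corollary~\ref{invariant sigma}; the base-change step is then simply that the groups $\H^i_c(Z^{(m-1)}\otimes\overline{\FF}_p,\cL_{f,b,\Psi})$ and their Frobenius weights are unchanged under finite extension of $k_L$, so the weight bound with constant $\moi_{K'}$ for $L'\in\cL'_{K',M}$ is automatically valid for $L\in\cL'_{K,M}$. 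That is a cleaner and genuinely different mechanism from the stability of a non-cancellation condition, and it sidesteps the real difficulty you flag (cancellation in $\sum\pm\alpha_i^m$ is not controlled by weights alone).
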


It is well known that the \lq classical\rq\ oscillation index is in general not stable under a biholomorphic transformation (see \cite{Var} or \cite{Arnold.G.V.II}). In contrast with this fact,
we expect that the motivic oscillation index at a point is stable by biholomorphic transformations.
As a consequence of Theorem \ref{mainthm1}, by using the relation between the motivic oscillation index  and the poles of Igusa zeta functions, we show that this is implied by Conjecture \ref{Conj1}.

\begin{prop}\label{conj2} Suppose that $(f,P)$ and $(g,Q)$ are similar over $K$. If Conjecture \ref{Conj1} holds, then $\moi_K(f,P)=\moi_K(g,Q)$.
\end{prop}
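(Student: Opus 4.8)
The plan is to chain Theorem \ref{mainthm1} together with two applications of Conjecture \ref{Conj1}. First, since $(f,P)\sim_K(g,Q)$, Theorem \ref{mainthm1} furnishes a natural number $M$ and a finite extension $K'/K$ such that for every non-archimedean local field $L\supseteq K'$ with residue characteristic at least $M$ and every multiplicative character $\chi$ of $L$ one has
\[
Z(f,\chi,L,\Phi_{L,P},s)=Z(g,\chi,L,\Phi_{L,Q},s).
\]
In particular, viewed as rational functions in $q_L^{-s}$, these two zeta functions have exactly the same poles, hence the same non-trivial poles.

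Next, by definition $\moi_{K'}(f,P)$ is the minimum of $\{-\Re(s_0)\}$ as $s_0$ ranges over the non-trivial poles of $Z(f,\chi,L,P;s)$ for all local fields $L\supseteq K'$ of sufficiently large residue characteristic and all $\chi$, and similarly for $(g,Q)$. Since this definition only involves local fields whose residue characteristic exceeds some (unspecified) bound, we may take that bound to be at least $M$ without changing the value; the previous paragraph then shows that the relevant sets of non-trivial poles for $f$ at $P$ and for $g$ at $Q$ coincide. Therefore $\moi_{K'}(f,P)=\moi_{K'}(g,Q)$.

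Finally, apply Conjecture \ref{Conj1} once to the polynomial $f$ with the subscheme $\{P\}$ and the extension $K'/K$, and once to $g$ with $\{Q\}$ and $K'/K$, obtaining $\moi_K(f,P)=\moi_{K'}(f,P)$ and $\moi_K(g,Q)=\moi_{K'}(g,Q)$. Combining the three equalities,
\[
\moi_K(f,P)=\moi_{K'}(f,P)=\moi_{K'}(g,Q)=\moi_K(g,Q),
\]
which is the assertion.

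There is no serious obstacle here: the argument is purely formal once Theorem \ref{mainthm1} is available. The single point requiring a little care is the passage from equality of the zeta functions (valid only in residue characteristic $\geq M$) to equality of $\moi_{K'}(f,P)$ and $\moi_{K'}(g,Q)$; this relies on the fact that the motivic oscillation index is by construction insensitive to discarding finitely many residue characteristics, which is precisely the role of the \lq big enough residue field characteristic\rq\ clause in its definition — and, indeed, the reason one works with $\moi$ rather than with a single $p$-adic oscillation index.
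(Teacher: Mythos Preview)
Your proof is correct and follows essentially the same route as the paper's: use Theorem \ref{mainthm1} to obtain $\moi_{K'}(f,P)=\moi_{K'}(g,Q)$ over a suitable finite extension $K'/K$, then apply Conjecture \ref{Conj1} twice to descend to $K$. The paper compresses the first step into one sentence, while you spell out how equality of the zeta functions for residue characteristic $\geq M$ yields equality of the motivic oscillation indices via the $\liminf$ in the definition; this elaboration is accurate and helpful.
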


In order to
handle some cases of Conjecture \ref{Conj1}, we use the transfer principle for constructible motivic functions from \cite{Cluckers-Loeser-ann} to relate exponential sums modulo $p^m$ and the motivic oscillation index  to the highest weight of $\ell$-adic \'etale cohomology groups with compact support, associated with Artin-Schreier sheaves on jet schemes over finite fields. We propose Conjecture \ref{highest} about this relation and show that this conjecture implies Conjecture \ref{Conjexp} and Conjecture \ref{Conj1}. Moreover,  by using Deligne's theory of weights on $\ell$-adic \'etale cohomology groups, we show that, if Conjecture \ref{highest} holds for $(f,P)$, then it holds for $(g,Q)$ if $(f,P)\sim_K (g,Q)$.

We prove Conjecture \ref{highest} for some cases. In particular, following from the result of \cite{CMN}, we prove that if $f$ is a polynomial \lq of Thom-Sebastiani type\rq, i.e., $f$ can be written as sum of  polynomial $g_i,1\leq i\leq r,$ where $g_i$ and $g_j$ have no common variables if $i\neq j$, and all $g_i$ have non-rational singularities, then Conjecture \ref{highest} holds for $f$. As a consequence, we have the following theorem.

\begin{thm}[Main theorem 2]\label{mainthm2} Let $(f,P)\sim_K (g,Q)$ and $g=\sum_{i=1}^r g_i$, where $g_i$ and $g_j$ have no common variables if $i\neq j$ and  $Q=Q_1\times \dots \times Q_r$. If all $g_i$ have  non-rational singularities at $Q_i$, then Conjectures  \ref{Conjexp} and  \ref{Conj1} hold for $(f,P)$.
\end{thm}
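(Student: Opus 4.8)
The statement is obtained by assembling earlier results, with the substantive work concentrated in one preliminary step. That first step is to prove Conjecture \ref{highest} for the germ $(g,Q)$. Since $g=\sum_{i=1}^{r}g_i$ with $g_i,g_j$ sharing no variables for $i\neq j$ and $Q=Q_1\times\cdots\times Q_r$, the germ $(g,Q)$ is of Thom--Sebastiani type. Analytically this shows up as the factorization $E_{g,L,Q,\psi}=\prod_{i=1}^{r}E_{g_i,L,Q_i,\psi}$, coming from $\Phi_{L,Q}=\prod_i\Phi_{L,Q_i}$ and $\psi(g(x))=\prod_i\psi(g_i(x_i))$; cohomologically, the Artin--Schreier sheaf attached to the jet polynomials of $g$ is an exterior/convolution product of those of the $g_i$, so its compactly supported $\ell$-adic cohomology is computed by a K\"unneth-type formula and its weights are controlled additively by the weights of the factors. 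Because each $g_i$ has non-rational singularities at $Q_i$, the case settled in \cite{CMN} applies to each factor (there with $\moi$ replaced by $\lct$, the two coinciding under non-rationality), and converting that log-canonical-threshold / Minimal Model Program input into the required top-weight bound yields Conjecture \ref{highest} for $(g,Q)$.

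Next I would transfer Conjecture \ref{highest} along the similarity. The relation $\sim_K$ is symmetric: if $\theta\colon U\to V$ and the unit $v\colon V\to\CC$ witness $(f,P)\sim_K(g,Q)$, then $\theta^{-1}\colon V\to U$ and the unit $1/(v\circ\theta)$ witness $(g,Q)\sim_K(f,P)$. Hence, using the invariance of Conjecture \ref{highest} under $\sim_K$ — proved via Main Theorem \ref{mainthm1} (which rests on the GAGA theorem of \cite{SerreGAGA} and model theory of algebraically closed fields of characteristic $0$) to match the relevant jet data of similar germs over non-archimedean local fields of large residue characteristic, together with Deligne's theory of weights on $\ell$-adic \'etale cohomology to match the cohomological invariants — Conjecture \ref{highest} for $(g,Q)$ propagates to Conjecture \ref{highest} for $(f,P)$, at the cost of enlarging $K$ to a finite extension $K'$ and bounding the residue characteristic below by some $M$, exactly as in Main Theorem \ref{mainthm1}.

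Finally, feeding Conjecture \ref{highest} for $(f,P)$ into the general implications established earlier — namely Conjecture \ref{highest} $\Rightarrow$ Conjecture \ref{Conjexp} and Conjecture \ref{highest} $\Rightarrow$ Conjecture \ref{Conj1}, both obtained through the transfer principle of \cite{Cluckers-Loeser-ann} — gives Conjectures \ref{Conjexp} and \ref{Conj1} for $(f,P)$, as claimed. As a consistency check, Conjecture \ref{Conj1} for $(f,P)$ together with Proposition \ref{conj2} then forces $\moi_K(f,P)=\moi_K(g,Q)$, in line with the equalities of zeta functions and exponential sums in Main Theorem \ref{mainthm1}.

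The main obstacle I anticipate is the first step: making the K\"unneth/convolution decomposition of the jet-scheme cohomology precise and uniform in the residue characteristic, and faithfully converting the birational-geometry input of \cite{CMN} for the non-rational factors into the $\ell$-adic weight statement that Conjecture \ref{highest} demands. By comparison, the transfer along $\sim_K$ is essentially formal once Main Theorem \ref{mainthm1} and purity are available, and the last step is immediate.
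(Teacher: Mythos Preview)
Your proposal is correct and follows essentially the same route as the paper: establish Conjecture \ref{highest} for each $(g_i,Q_i)$ via the non-rational case (Theorem \ref{nonrational}), assemble these into Conjecture \ref{highest} for $(g,Q)$ by the Thom--Sebastiani/K\"unneth argument (Proposition \ref{Thom-Sebastiani} and Corollary \ref{Th-S}), transfer to $(f,P)$ by the invariance of Conjecture \ref{highest} under $\sim_K$ (Theorem \ref{transferanalytic}), and conclude via Proposition \ref{Implication}. The only nuance you gloss over is that the Thom--Sebastiani step needs the additivity $\moi_K(g,Q)=\sum_i\moi_K(g_i,Q_i)$, which the paper secures precisely from the non-rationality hypothesis (so that $-\lct_{Q_i}(g_i)$ is genuinely a pole for infinitely many $L$); this is the content of Corollary \ref{Th-S} and is exactly the ``obstacle'' you flagged in your first step.
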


We have the following important corollaries.

\begin{cor} \label{ADE}If $f$ has singularities of type $A-D-E$ at $P$, then Conjecture \ref{Conjexp} holds for $(f,P)$.
\end{cor}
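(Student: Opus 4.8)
The plan is to bring $(f,P)$ into a Thom--Sebastiani normal form that is still defined over $K$, and then to invoke Main Theorem \ref{mainthm2}. The point of departure is the classical classification of simple hypersurface singularities (see e.g. \cite{Arnold.G.V.II}): if $f$ has a singularity of type $A_k$ $(k\ge 1)$, $D_k$ $(k\ge 4)$, $E_6$, $E_7$, or $E_8$ at $P$, then, after a biholomorphic change of coordinates carrying $P$ to the origin, the germ of $f$ at $P$ becomes one of
\begin{gather*}
x_1^{k+1}+x_2^2+\cdots+x_n^2,\qquad x_1^{k-1}+x_1x_2^2+x_3^2+\cdots+x_n^2,\\
x_1^3+x_2^4+x_3^2+\cdots+x_n^2,\qquad x_1^3+x_1x_2^3+x_3^2+\cdots+x_n^2,\qquad x_1^3+x_2^5+x_3^2+\cdots+x_n^2,
\end{gather*}
respectively; call this polynomial $g$. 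The key observation is that such a $g$ has all its coefficients equal to $1$, hence lies in $\QQ[x_1,\dots,x_n]\subseteq K[x_1,\dots,x_n]$; since moreover $P\in K^n$ and $0\in K^n$, and since the relation $\sim_K$ imposes $K$-rationality only on the polynomials and on the base points (not on the biholomorphism or the unit), one obtains $(f,P)\sim_K(g,0)$ at no cost.

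The second step is to observe that in each case $g=\sum_i g_i$ is of Thom--Sebastiani type with the $g_i$ in pairwise disjoint sets of variables and $0=0\times\cdots\times 0$ correspondingly: for types $A$, $E_6$, $E_8$ every summand is a one-variable monomial $x_j^a$ with $a\ge 2$, for type $D_k$ one summand is the plane-curve germ $x_1^{k-1}+x_1x_2^2=x_1(x_1^{k-2}+x_2^2)$ and the rest are the monomials $x_j^2$, and for type $E_7$ the distinguished summand is $x_1^3+x_1x_2^3=x_1(x_1^2+x_2^3)$. I would then check that each $g_i$ has non-rational singularities at the relevant factor of the origin, which follows from the strict inequality $\lct<1$ at that point (a germ whose log canonical threshold at a point is $<1$ is not rational there). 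Now $\lct_0(x_j^a)=1/a<1$ for $a\ge 2$, and a short Newton-polyhedron computation --- these germs being non-degenerate with respect to their Newton polyhedra, so that Howald's formula applies --- gives $\lct_0(x_1^{k-1}+x_1x_2^2)=\tfrac{k}{2(k-1)}\le\tfrac23<1$ and $\lct_0(x_1^3+x_1x_2^3)=\tfrac59<1$. With this, the hypotheses of Main Theorem \ref{mainthm2} are met, and that theorem yields Conjectures \ref{Conjexp} and \ref{Conj1} for $(f,P)$; in particular Conjecture \ref{Conjexp} holds for $(f,P)$, which is what the corollary asserts.

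I do not expect a genuine obstacle: once the A--D--E germs are recognized as Thom--Sebastiani sums with non-rational summands, the corollary is a formal consequence of Main Theorem \ref{mainthm2}. The two points that deserve care are, first, that the honestly non-trivial cases are covered by the Thom--Sebastiani mechanism rather than by \cite{CMN} applied to $g$ directly --- for instance the quadric cone $x_1^2+\cdots+x_n^2$ of type $A_1$ with $n\ge 3$ has rational (indeed terminal) singularities, so one genuinely needs to work summand by summand, where each $x_j^2$ has $\lct=\tfrac12<1$ --- and, second, confirming that the paper's notion of ``non-rational singularities at a point'' is the one under which the non-reduced one-variable germs $x_j^a$ $(a\ge 2)$ qualify as non-rational. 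The latter is precisely the feature that makes the Thom--Sebastiani formulation of Main Theorem \ref{mainthm2} apply uniformly across all of A--D--E, so it should hold by design; checking it against the definitions in Section \ref{section2} is the only real verification required.
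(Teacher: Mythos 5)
Your proposal is correct and follows essentially the same route as the paper: bring $(f,P)$ into one of the $A$--$D$--$E$ normal forms $(g,0)$ via the similarity relation $\sim_K$ (which, as you note, only requires $K$-rationality of the polynomial and the base point, not of the biholomorphism), decompose $g$ as a Thom--Sebastiani sum whose summands $x_j^a$, $x_1^{k-1}+x_1x_2^2$, $x_1^3+x_1x_2^3$ all have $\lct<1$ at the origin and hence non-rational singularities, and invoke Main Theorem \ref{mainthm2}. The paper phrases the conclusion by citing Theorem \ref{transferanalytic} and Proposition \ref{Implication} directly rather than the packaged Main Theorem \ref{mainthm2}, and asserts the non-rationality of the summands without the explicit Newton-polygon computations you supply, but the substance is identical.
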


\begin{cor}\label{3variables} Conjecture \ref{Conjexp} holds for all  polynomials in at most three variables.
\end{cor}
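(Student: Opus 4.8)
The plan is to reduce Conjecture \ref{Conjexp} for an arbitrary polynomial $f$ in three variables to the Thom-Sebastiani case covered by Main theorem 2, via a classification of the singularity germs $(f,P)$ up to the relation $\sim_K$. First I would reduce to a local statement at each point $P$ of $Z$. Since $\moi_K(f,Z)$ involves a finite amount of data coming from a fixed embedded resolution and poles of the associated Igusa zeta functions, it suffices (after a partition of $Z$) to bound $|E_{f,L,P,\psi}|$ for each critical value $c$ of $f$ and each point $P$ in the relevant fiber; for points where $f-c$ is smooth the estimate is classical (stationary phase gives exponential decay with $\sigma$ as large as one wants, in fact the integral essentially vanishes for $m$ large), so the content is at the finitely many singular points of the fibers $\{f-c=0\}$. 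Thus I must handle a germ $(f-c,P)$ with a singular point at $P\in K^n$ in $n\le 3$ variables.

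The heart of the argument is then a case distinction according to whether the singularity is rational or not. If $(f-c,P)$ has non-rational singularities in every neighbourhood of $P$, then the result of \cite{CMN} already gives Conjecture \ref{Conjexp} with bound $\lct_P(f-c)=\moi_K(f-c,P)$; nothing more is needed. If instead $(f-c,P)$ has rational singularities, I would invoke the classification of rational (equivalently, in the hypersurface case, canonical or better) singularities in low dimension: for a hypersurface germ in three variables (i.e. a surface singularity), the rational Gorenstein ones are exactly the canonical ones, which are precisely the $A$-$D$-$E$ (du Val) singularities — so Corollary \ref{ADE} applies directly once one knows $(f-c,P)\sim_K(g,Q)$ with $g$ a normal form of $A$-$D$-$E$ type, which is itself of Thom-Sebastiani type. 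For $n=2$ the rational hypersurface germs are the $A_k$ curve singularities, again of Thom-Sebastiani (indeed of the form $x^a + y^b$) type, so Main theorem 2 applies. For $n=1$ there is nothing to prove, and the case where $P$ is a smooth point of the fiber was already dispatched. Combining the rational and non-rational cases across all critical values and all singular points yields the bound with $\sigma<\moi_K(f,Z)$ for all of $Z$.

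The main obstacle is the passage from the abstract isomorphism-type classification to an honest similarity $(f-c,P)\sim_K(g,Q)$ with $g$ in Thom-Sebastiani normal form \emph{over the number field} $K$ (or a controlled finite extension thereof, which is harmless by Main theorem 2). The classical statements that an $A$-$D$-$E$ surface germ is analytically equivalent to its standard normal form are over $\CC$ and produce a biholomorphic change of coordinates together with a unit multiplier; one must check that the finite-determinacy / normal-form arguments (e.g. via the Morse lemma with parameters, or Arnold's method of spectral sequences for finitely determined germs) can be carried out with coefficients algebraic over $K$, so that the resulting $\theta$ and $v$ have algebraic coefficients and $(f-c,P)\sim_{K'}(g,Q)$ for a number field $K'$. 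A clean way to see this: finite determinacy means the analytic equivalence class is detected by a jet of bounded order, and the space of coordinate changes realizing the normal form is cut out by algebraic equations over $K$, hence has a point over $\overline{K}$, i.e. over some finite extension $K'$; then Main theorem 2 (whose hypothesis only needs similarity over the base number field, and which itself allows a further finite extension via the $\moi$-stability it proves) applies to $(f-c,P)$. A secondary, more bookkeeping-level point is to make the reduction to finitely many points and critical values uniform in $L$ and $\psi$ — but this is exactly the kind of constructibility already built into the definition of $\moi_K$ and into the integrals $E_{f,L,Z,\psi}$, so it costs only a finite $M$ and a finite extension, consistent with the statement of Conjecture \ref{Conjexp}.
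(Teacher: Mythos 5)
Your proposal follows essentially the same route as the paper: reduce to the germ at a singular point of a fiber (smooth points contributing nothing for $m_\psi\geq 2$), then split according to whether the singularity is rational or not, invoking \cite{CMN} in the non-rational case and Durfee's classification of rational hypersurface singularities in $\leq 3$ variables as $A$--$D$--$E$ germs (hence of Thom--Sebastiani type, so Corollary~\ref{ADE} applies) in the rational case; this is exactly how the paper argues. One remark: the ``obstacle'' you raise about upgrading the $\CC$-analytic equivalence to $\sim_K$ is not actually present, because the relation $\sim_K$ in the paper's Definition only requires $f,g$ to have coefficients in $K$ and $P,Q\in K^n$ --- the biholomorphism $\theta$ and unit $v$ are only required to be holomorphic over $\CC$, with no algebraicity demanded --- so the classical normal-form statements over $\CC$ already give $(f-c,P)\sim_K(g,0)$ directly; the flexibility to pass to a finite extension $K'$ is then supplied by Main theorem~\ref{mainthm2} itself.
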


We remark here that it is absolutely not clear how to obtain Corollaries  \ref{ADE} and \ref{3variables} from the method in \cite{CMN}, depending crucially on resolution of singularities. Indeed, we only know embedded resolution of singularities in an abstract way, in general we do not know much about the arithmetic properties of the exceptional divisors in a resolution,  the value of the motivic oscillation index is not clear in these cases, and some of these divisors  may not be essential, i.e., their numerical data $(N, \nu)$ (see Section \ref{section2}) may satisfy $\nu/N<\moi_K(f,P)$.


\medskip
We introduce the precise definitions and recall the necessary background on exponential sums and Igusa local zeta functions in Section \ref{section2}.  Then, in Section \ref{section3}, we prove Theorem \ref{mainthm1}.
In Section \ref{section4}, we develop some results about finite characteristic analogues of our integrals $E_{f,L,Z,\psi}$; we link those to our original setting in Section \ref{section5}, in particular in the context of Conjecture \ref{highest}.
We then establish our unconditional results in Section \ref{section6}, including the proof of Conjecture \ref{Conjexp} for all polynomials in three variables.

\medskip

We hope that this paper may provide some general tools to study exponential sums modulo $p^m$ and Igusa local zeta functions. Given a polynomial $f$, one may find a much more simple polynomial $g$ which is similar to $f$. If  $g$ satisfies Conjecture \ref{Conjexp}, we obtain information about Conjecture \ref{Conjexp} for $f$. So we have  room to apply the Weierstrass preparation theorem here. If the transformation is algebraic, then it is not hard to relate the integrals associated with $f$ and $g$ by using the Jacobian matrix as in Theorem \ref{mainthm1}. But even if an algebraic transformation over $\overline{\QQ}$  is found, Proposition \ref{conj2} and Theorem \ref{mainthm2} are still not clear without our work in Sections \ref{section4}, \ref{section5}, \ref{section6}. On the other hand, it is often not easy or even impossible to find an algebraic transformation.

Furthermore, we can transfer the study for exponential sums modulo $p^m$ of $f$ to the study for exponential sums of \lq jet-polynomials\rq\ of $f$ over finite fields. These jet-polynomials appear naturally with good properties,  as seen in Sections \ref{section4}, \ref{section5}, \ref{section6}. We may expect a theory about weights  related to exponential sums for these polynomials and link it with Igusa's monodromy conjecture, Bernstein-Sato polynomials, mixed hodge structures, etc. related to $f$.

\medskip
In the appendix, we answer a question in \cite{CMN} about poles of maximal order of Igusa zeta functions with non-trivial character.
More precisely, let $L$ be a $p$-adic field, $f\in L[x_1,\dots,x_n]\setminus L$, and $\Phi$ a  Schwartz-Bruhat function on $L^n$. Let also $\chi$ be a character of $\cO_L^*$ of order $d>1$.
If $s_0$ is the real part of a pole of order $n$ of $Z(f,\chi,L,\Phi;s)$, then $s_0=-\lct_{Z}(f)=-1/dk$ for some positive integer $k$.

\section{Exponential sums and Igusa local zeta functions}\label{section2}

In this section we recall for all non-archimedean local fields the notions of exponential sums modulo $p^m$ and Igusa local zeta functions, and how those are related.

\subsection{Setup, multiplicative and additive characters}

Let $L$ be a non-archimedean local field, i.e., a finite extension of $\QQ_p$ or $\FF_p((t))$ for some prime number $p$. We denote by $\cO_L$ and $\cM_L$ the valuation ring of $L$ and its maximal ideal, respectively, and by $k_L=\cO_L/\cM_L$ the residue field of $L$, of characteristic $p_L$ and cardinality $q_L$.
For $x\in L$, we denote by $\ord_L(x) \in \ZZ\cup\{+\infty\}$ its valuation and  by $|x|_L=q_L^{-\ord_L(x)}$ its absolute value.

We fix a uniformizing parameter $\pi$ for $\cO_L$, and denote by $ac:L\to\cO_L^*$ the map given by $ac(x)=x\pi^{-\ord_L(x)}$ if $x\neq 0$ and $ac(0)=0$.
For $x\in \cO_L$, we write $\overline{x}$ for the image of $x$ by the residue map $\cO_L\to \cO_L/\cM_L=k_L$, and we put $\ac(x)=\overline{ac(x)}$ if $x\in L$.

For each positive integer $n$, we endow $L^n$ with  the Haar measure $|dx|$ which is normalized such that the volume of $\cO_L^n$ is $1$.

\medskip
A {\em multiplicative character} $\chi$ of $\cO_L^*$ is a continuous homomorphism $\chi:\cO_L^*\to \CC^*$ with finite image. We also put $\chi(0)=0$.
If $\chi$ is a multiplicative character, the {\em conductor} $c(\chi)$ of $\chi$ is the smallest integer $e\geq 1$ such that $\chi$ is trivial on $1+\cM_L^e$.
An {\em additive character} of $L$ is a continuous homomorphism $\psi:L\to\CC^*$ with finite image. If $\psi$ is a non-trivial additive character, the {\em conductor} $m_{\psi}$ of $\psi$ is the integer $m$ such that $\psi$ is trivial on $\pi^m\cO_L$, but non-trivial on $\pi^{m-1}\cO_L$.

If we fix an additive character $\psi$ with conductor $0$, then any non-trivial additive character of $L$ is of the form $\psi_z:L\to\CC^{*}: x \mapsto \psi(zx)$ for some non-zero element $z$ of $L$.  In particular, we have that $m_{\psi_z}=-\ord_L(z)$.

\medskip
From now on, we mean by a non-archimedean local field $L$ implicitly a triple $(L,\pi,\psi)$, where $\pi$ is a fixed uniformizing parameter and $\psi$ is a fixed additive character  of conductor $0$.  Moreover, we will suppose that $\psi_{\pi^{-1}}$ induces the standard additive character $\Psi_{p_L}\circ \Tr_{k_L/\FF_{p_L}}: k_L\to\CC^*$ of $k_L$, where $\Psi_{p_L}:\FF_{p_L}\to\CC^*$ sends $(x\mod p_L)$ to $\exp(\frac{2\pi ix}{p_L})$ and $\Tr_{k_L/\FF_{p_L}}$ is the trace map. If needed, we will write explicitly $(L,\pi,\psi)$ instead of $L$.

\subsection{Exponential sums and Igusa local zeta functions}

\begin{defn}
Let $L$ be a non-archimedean local field, $f\in L[x_1,\dots,x_n]$ a non-constant polynomial and $\Phi$  a Schwartz-Bruhat function on $L^n$, i.e., a locally constant function with compact support.

\begin{itemize}
\item[(i)]
The {\em exponential sum} associated with $f$, $\Phi$ and an additive character $\psi_z$ is
$$E_{f,L,\Phi,\psi_z}:=\int_{L^n}\Phi(x)\psi_z(f(x))|dx|.$$
\item[(ii)] The {\em Igusa local zeta function} associated with $f$, $\Phi$ and a multiplicative character $\chi$ is
$$Z(f,\chi,L,\Phi;s):=\int_{L^n}\Phi(x)\chi\bigl(ac(f(x))\bigr)|f(x)|_L^s|dx|,$$
where $s\in \CC$ with $\Re(s)>0$.
\end{itemize}
\end{defn}
\noindent
It is well known that $Z(f,\chi,L,\Phi;s)$ is holomorphic in this region, that it has a meromorphic continuation to $\CC$, and that it is a rational function in $p^{-s}$.

We recall the relation between these objects.

\begin{prop}[\cite{DenefBourbaki}, Proposition 1.4.4]\label{zetaex}
Let $L$ be a non-archimedean local field of characteristic $0$, with a fixed uniformizing parameter $\pi$ and a fixed additive character $\psi$ of conductor $0$, and $f\in L[x_1,\dots,x_n]$ a non-constant polynomial. Let $z=u\pi^{-m}$ with  $u\in\cO_{L}^{*}$ and $m\geq 1$. Then $E_{f,L,\Phi,\psi_z}$ is equal to

\begin{align*}
Z(f,\chi_\textnormal{triv},L,\Phi;0)&+\mbox{\textnormal{Coeff}}_{t^{m-1}}\Big(\dfrac{(t-q_L)Z(f,\chi_\textnormal{triv},L,\Phi;s)}{(q_L-1)(1-t)}\Big)\\
&+\sum_{\chi\neq\chi_\textnormal{triv}}g_{\chi^{-1}}\chi(u)\mbox{\textnormal{Coeff}}_{t^{m-c(\chi)}}\big(Z(f,\chi,L,\Phi;s)\big),
\end{align*}
where $g_{\chi}$ is the Gaussian sum
\[
g_{\chi}=\frac{q_L^{1-c(\chi)}}{q_L-1}\sum_{\overline{v}\in\bigl(\cO_{L}/\cM_L^{c(\chi)}\bigr)^*}\chi(v)\psi\bigl(v/\pi^{c(\chi)}\bigr).
\]
\end{prop}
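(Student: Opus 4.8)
The plan is to reduce this oscillatory integral to a finite Fourier analysis on the unit groups $(\cO_{L}/\cM_{L}^{e})^{*}$, via the standard slicing-by-valuation argument. First I would partition $L^{n}$ according to the value $v=\ord_{L}(f(x))$; since $f$ is non-constant the locus $\{f=0\}$ has measure zero and drops out, so, using $z=u\pi^{-m}$ and the factorisation $f(x)=\pi^{v}\,ac(f(x))$ on the $v$-th slice,
$$E_{f,L,\Phi,\psi_{z}}=\sum_{v\in\ZZ}\ \int_{\{\ord_{L}f(x)=v\}}\Phi(x)\,\psi\!\bigl(u\pi^{v-m}\,ac(f(x))\bigr)\,|dx|.$$
For $v\ge m$ the argument $u\pi^{v-m}\,ac(f(x))$ lies in $\cO_{L}$, where $\psi$ is trivial, so this slice contributes exactly $\int_{\{\ord_{L}f(x)=v\}}\Phi(x)\,|dx|$.

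For $v<m$, the function $t\mapsto\psi(u\pi^{v-m}t)$ on $\cO_{L}^{*}$ is constant on cosets of $\pi^{m-v}\cO_{L}$, hence factors through $(\cO_{L}/\cM_{L}^{m-v})^{*}$; I would expand it in the characters of that finite group, i.e. in the multiplicative characters $\chi$ with $c(\chi)\le m-v$:
$$\psi(u\pi^{v-m}t)=\sum_{c(\chi)\le m-v}b_{v,\chi}\,\chi(t),\qquad b_{v,\chi}=\frac{1}{q_{L}^{m-v-1}(q_{L}-1)}\sum_{\overline{t}\in(\cO_{L}/\cM_{L}^{m-v})^{*}}\psi(u\pi^{v-m}t)\,\chi^{-1}(t).$$
The heart of the matter is the evaluation of $b_{v,\chi}$. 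Substituting $w=ut$ and comparing with the definition of $g_{\chi}$ (and using $c(\chi^{-1})=c(\chi)$) yields $b_{v,\chi}=\chi(u)\,g_{\chi^{-1}}$ exactly when $c(\chi)=m-v$. When $\chi$ is non-trivial with $c(\chi)<m-v$, one gets $b_{v,\chi}=0$ by a short orthogonality argument: partition the summation domain into cosets of $1+\cM_{L}^{m-v-1}$, on each of which $\chi^{-1}$ is constant while $s\mapsto\psi(u\pi^{-1}ts)$ describes a non-trivial additive character of $k_{L}$, whose sum over $k_{L}$ vanishes. The same kind of computation gives $b_{v,\chi_{\mathrm{triv}}}=0$ for $v\le m-2$ and $b_{m-1,\chi_{\mathrm{triv}}}=-1/(q_{L}-1)$, where the normalisation that $\psi_{\pi^{-1}}$ induces the standard additive character of $k_{L}$ enters.

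It then remains to reassemble. Writing $c_{\chi,v}:=\int_{\{\ord_{L}f(x)=v\}}\Phi(x)\,\chi(ac(f(x)))\,|dx|$, so that $Z(f,\chi,L,\Phi;s)=\sum_{v}c_{\chi,v}\,t^{v}$ with $t=q_{L}^{-s}$, the non-trivial characters contribute
$$\sum_{v<m}\ \sum_{\substack{c(\chi)=m-v\\ \chi\ne\chi_{\mathrm{triv}}}}b_{v,\chi}\,c_{\chi,v}=\sum_{\chi\ne\chi_{\mathrm{triv}}}g_{\chi^{-1}}\,\chi(u)\,c_{\chi,m-c(\chi)},$$
which is the third summand, since $c_{\chi,m-c(\chi)}$ is the coefficient of $t^{m-c(\chi)}$ in $Z(f,\chi,L,\Phi;s)$. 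The trivial character contributes $\sum_{v\ge m}c_{\chi_{\mathrm{triv}},v}-\frac{1}{q_{L}-1}c_{\chi_{\mathrm{triv}},m-1}$; putting $S_{k}=\sum_{v\le k}c_{\chi_{\mathrm{triv}},v}$ and noting $\sum_{v}c_{\chi_{\mathrm{triv}},v}=Z(f,\chi_{\mathrm{triv}},L,\Phi;0)$, a routine manipulation with $\frac{1}{1-t}=\sum_{j\ge0}t^{j}$ rewrites this as $Z(f,\chi_{\mathrm{triv}},L,\Phi;0)+\frac{S_{m-2}-q_{L}S_{m-1}}{q_{L}-1}$, and $\frac{S_{m-2}-q_{L}S_{m-1}}{q_{L}-1}$ is precisely the coefficient of $t^{m-1}$ in $\frac{(t-q_{L})Z(f,\chi_{\mathrm{triv}},L,\Phi;s)}{(q_{L}-1)(1-t)}$. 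Adding the two contributions gives the claimed identity.

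The only genuinely delicate point is the Gauss-sum bookkeeping in the second step: keeping the powers of $q_{L}$ consistent, producing $g_{\chi^{-1}}$ (not $g_{\chi}$) together with the twist $\chi(u)$, and establishing the vanishing $b_{v,\chi}=0$ for $c(\chi)<m-v$. The valuation-slicing, the measure-zero remark, and the final generating-function reshuffle for the trivial character are all routine.
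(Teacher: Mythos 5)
The paper does not prove this proposition itself but cites it to Denef's Bourbaki report, and your argument is precisely the standard proof given there: slice $L^{n}$ by $\ord_{L}(f(x))$, write $\psi(zf(x))$ on each slice as a finite Fourier series in the multiplicative characters of $(\cO_{L}/\cM_{L}^{m-v})^{*}$, evaluate the coefficients via orthogonality (yielding the Gauss sums $\chi(u)g_{\chi^{-1}}$ for $c(\chi)=m-v$, vanishing for $c(\chi)<m-v$, and $-1/(q_{L}-1)$ for the trivial character when $v=m-1$), and reassemble using $Z(f,\chi,L,\Phi;s)=\sum_{v}c_{\chi,v}t^{v}$. I checked the orthogonality computations, the Gauss-sum normalisation, and the final generating-function identity $\frac{S_{m-2}-q_{L}S_{m-1}}{q_{L}-1}=\text{Coeff}_{t^{m-1}}\bigl(\frac{(t-q_{L})Z_{0}}{(q_{L}-1)(1-t)}\bigr)$, and they are all correct.
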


\noindent
The following lemma allows us to concentrate  on only finitely many multiplicative characters $\chi$.

\begin{lem}[\cite{Denef91,Igusalecture}]\label{zerofunction} Let $L$ be a non-archimedean local field of characteristic $0$ and $f$  a non-constant polynomial in $L[x_1,\dots,x_n]$. Suppose that $\Phi$ is a Schwartz-Bruhat function on $L^n$ satisfying $\Supp(\Phi)\cap C_f\subset f^{-1}(0)$, where $C_f$ is the critical locus of $f$.
\begin{itemize}
\item[(i)] Then $Z(f,\chi,L,\Phi;s)=0$ for all but finitely many multiplicative characters $\chi$.
\item[(ii)] Suppose moreover that $\Phi$ is residual, i.e., $\Supp(\Phi)\subset\cO_L^n$ and $\Phi(x)$ depends only on  $x\mod \cM_L$, that $\overline{f}:=f\mod \cM_L \neq 0$ and that $f$ has a resolution with tame good reduction mod $\cM_L$ (see the definition below). We consider the function $\overline{\Phi}(x \mod \cM_L):=\Phi(x)$. If  $(\Supp\overline{\Phi})\cap C_{\overline{f}}\subset \overline{f}^{-1}(0)$, then $Z(f,\chi,L,\Phi;s)=0$ for all multiplicative characters $\chi$ with conductor $c(\chi)>1$.
\end{itemize}
\end{lem}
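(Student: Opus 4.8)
The plan is to evaluate $Z(f,\chi,L,\Phi;s)$ through a fixed embedded resolution of $f^{-1}(0)$ and to isolate the one--dimensional integral through which the character $\chi$ enters. Since $\charac L=0$ there is an embedded resolution $h\colon Y\to\AA^n_L$ of $f^{-1}(0)$; I write $\{E_i\}_{i\in T}$ for the components of $h^{-1}(f^{-1}(0))$, with numerical data $(N_i,\nu_i)$, so that near any point of $Y(L)$ there are local coordinates $y$ with $f\circ h=\epsilon\prod_{i\in T}y_i^{N_i}$ and $h^{*}(dx)=\eta\prod_{i\in T}y_i^{\nu_i-1}\,dy$ for units $\epsilon,\eta$. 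Pulling $Z$ back along $h$ and covering $h^{-1}(\Supp\Phi)$ — compact, as $h$ is proper — by finitely many such charts, I would split into two types. On a chart meeting $h^{-1}(f^{-1}(0))$ the angular--radial substitution $y_i=\pi^{k_i}u_i$ reduces the contribution to combinations of one--dimensional integrals $\int_{B}\chi(ac(y))^{N_i}\,|y|_L^{N_is+\nu_i-1}\,dy$ over balls $B\subset\cO_L$; on a chart lying off $h^{-1}(f^{-1}(0))$, $f\circ h$ is a unit, and there the hypothesis $\Supp\Phi\cap C_f\subset f^{-1}(0)$ forces $f$ to be submersive on $\Supp\Phi$, so after one further change of variables $f\circ h$ becomes itself one of the coordinates.

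For part (i) I would, for $\Re s$ large, expand $Z=\sum_{k\ge0}a_k(\chi)t^k$ with $t=q_L^{-s}$ and $a_k(\chi)=\int_{\{x\in\Supp\Phi\,:\,\ord_L(f(x))=k\}}\Phi(x)\,\chi(ac(f(x)))\,|dx|$. For each $k$ the domain here is compact and disjoint from $f^{-1}(0)$, hence — by hypothesis — from $C_f$; so $f$ is submersive on it, the pushforward $\mu_k$ under $ac\circ f$ of $\Phi\,|dx|$ restricted there is a measure on $\cO_L^{*}$ with locally constant density, and $a_k(\chi)=\int_{\cO_L^{*}}\chi\,d\mu_k=0$ as soon as $c(\chi)$ exceeds the level $\ell_k$ of $\mu_k$. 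The essential point is that $\ell_k$ is bounded uniformly in $k$: describing $f$ near $f^{-1}(0)$ by the fixed resolution $h$, the measures $\mu_k$ stabilise as $k\to\infty$ (a reformulation of the rationality of $Z$, whose denominator can be taken to be the $\chi$--independent $\prod_{i\in T}(1-q_L^{-\nu_i}t^{N_i})$). Thus $a_k(\chi)=0$ for all $k$ whenever $c(\chi)>e_0:=\sup_k\ell_k<\infty$, so $Z=0$; since only finitely many $\chi$ satisfy $c(\chi)\le e_0$, part (i) follows.

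For part (ii) I would choose $h$ with tame good reduction mod $\cM_L$, so that it extends to $\cH\colon\cY\to\AA^n_{\cO_L}$ with $\cY$ smooth over $\cO_L$, the $E_i$ specialising to the components of an embedded resolution of $\overline{f}^{-1}(0)$ with the same $(N_i,\nu_i)$, and $p_L\nmid N_i$ for all $i$. Since $\Phi$ is residual and $h$ has good reduction, $\Phi\circ h$ is constant on residue discs of $\cY(\cO_L)$, and by smoothness $Z(f,\chi,L,\Phi;s)$ is the finite sum over $\bar a\in\cY(k_L)$ of the residue--disc integrals. If $\bar a$ maps off $\overline{f}^{-1}(0)$ into $\Supp\overline{\Phi}$, then $\bar a\notin C_{\overline{f\circ h}}$ by the residual hypothesis $(\Supp\overline{\Phi})\cap C_{\overline{f}}\subset\overline{f}^{-1}(0)$, so one may complete $f\circ h$ to a system of \'etale coordinates near $\bar a$ over $\cO_L$, with $f\circ h$ of nonzero reduction $\bar z_0\in k_L^{*}$; then the residue--disc integral equals a nonzero constant times $\int_{\tilde z_0(1+\cM_L)}\chi(z)\,|dz|=\chi(\tilde z_0)\int_{1+\cM_L}\chi(w)\,|dw|=0$, because $c(\chi)>1$ makes $\chi$ non-trivial on $1+\cM_L$. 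If $\bar a$ maps into $\overline{f}^{-1}(0)$, it lies on $\bigcap_{i\in I}E_{i,k_L}$ for a nonempty $I$; integrating first in $y_{i_0}$ for some $i_0\in I$ and writing $y_{i_0}=\pi^{k}u$, the inner integral is proportional to $\int_{\cO_L^{*}}\chi(G_k(u))\,|du|$ with $G_k(u)=\epsilon(\pi^{k}u,\dots)\,u^{N_{i_0}}$. Because $p_L\nmid N_{i_0}$ and $k\ge1$, the logarithmic derivative $G_k'/G_k=N_{i_0}u^{-1}+\pi^{k}(\cdots)$ has absolute value $1$, so $G_k$ carries each residue disc $\tilde\zeta(1+\cM_L)$ of $\cO_L^{*}$ isomorphically, with unit Jacobian, onto a residue disc of $\cO_L^{*}$; hence $\int_{\tilde\zeta(1+\cM_L)}\chi(G_k(u))\,|du|$ is a constant multiple of $\int_{1+\cM_L}\chi(w)\,|dw|=0$, so $\int_{\cO_L^{*}}\chi\circ G_k=0$. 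Every residue--disc integral thus vanishes, and $Z(f,\chi,L,\Phi;s)=0$.

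The step I expect to be the main obstacle is controlling the unit $\epsilon$ in $f\circ h=\epsilon\prod y_i^{N_i}$, i.e. making sure it does not spoil the vanishing mechanism — in (i) the uniform bound $e_0$ on the levels $\ell_k$, in (ii) the factor $\int_{\cO_L^{*}}\chi\circ G_k=0$. In (i) this amounts to the rationality of $Z$ with a $\chi$--independent denominator coming from the fixed resolution. In (ii) it is exactly where tame good reduction is indispensable: because $|N_i|=1$ dominates the higher--order corrections, $G_k$ is \'etale on $\cO_L^{*}$, and $\chi^{N_i}$ stays non-trivial on $1+\cM_L$ whenever $\chi$ is. Tameness cannot be dropped: for $f=x^{p}$ over $\QQ_p$ one computes $Z(f,\chi,\QQ_p,\11_{\ZZ_p};s)=\bigl(\int_{\ZZ_p^{*}}\chi^{p}(u)\,du\bigr)/(1-p^{-1}t^{p})$, which is nonzero for the finitely many $\chi$ with $\chi^{p}$ trivial on $\ZZ_p^{*}$ — including some of conductor $2$.
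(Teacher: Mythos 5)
The paper cites \cite{Denef91,Igusalecture} for this lemma and gives no proof of its own, so there is no in-paper argument to compare against; your reconstruction is essentially the standard one from those references. Part (ii) is correct and complete: after passing to an $\cO_L$-smooth good-reduction model $\cY$ of a tame resolution, every residue disc of $\cY(\cO_L)$ meeting $\Supp(\Phi\circ h)$ falls into one of the two cases you describe, and in both the disc integral is a non-zero multiple of $\int_{1+\cM_L}\chi(w)\,|dw|=0$ (in the first case via $(\Supp\overline{\Phi})\cap C_{\overline{f}}\subset\overline{f}^{-1}(0)$ and \'etale coordinates; in the second because $p_L\nmid N_{i_0}$ forces $|G_k'/G_k|=1$, so $G_k$ permutes the residue discs of $\cO_L^*$ measure-preservingly). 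This is precisely the mechanism behind \cite[Theorem 2.2]{Denef91}, and your $f=x^p$ computation correctly pinpoints why tameness cannot be dropped.

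Part (i) is also correct, but the step you flag yourself, the uniform bound $\sup_k\ell_k<\infty$, is not a ``reformulation of the rationality of $Z$''; rationality with a $\chi$-independent denominator says nothing by itself about the conductor of the pushforward densities $\mu_k$. What does bound $\ell_k$ is the resolution together with the analyticity of the unit $\epsilon$ in $f\circ h=\epsilon\prod y_i^{N_i}$: on the stratum $\ord(y_i)=k_i$ inside a compact chart, $\epsilon$ varies only inside a coset of $1+\pi^{\min_i k_i}\cO_L$, so for $k=\sum N_ik_i$ large the measure $\mu_k$ coincides (up to the contribution of a bounded set of low strata, handled by compactness) with the pushforward under the fixed monomial $(u_i)\mapsto\prod u_i^{N_i}$, whose level is independent of $k$. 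Equivalently, and closer to Igusa's actual treatment, one can bypass the $a_k$-decomposition and argue directly from your first-paragraph chart decomposition, the charts off $h^{-1}(f^{-1}(0))$ being handled by the submersion hypothesis $\Supp\Phi\cap C_f\subset f^{-1}(0)$; either route closes the gap.
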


\begin{cor}\label{invariant sigma} Let $L$ be a non-archimedean local field of characteristic $0$ and  $f\in L[x_1,\dots,x_n]$ a non-constant polynomial. Let $\Phi$ be a Schwartz-Bruhat function on $L^n$ satisfying $\Supp(\Phi)\cap C_f\subset f^{-1}(0)$. Denote by $Pol_{f,L,\Phi}$  the set of complex numbers which are poles of $Z(f,\chi,L,\Phi;s)$ with $\chi\neq\chi_\textnormal{triv}$ or poles of $(q_L^{s+1}-1)Z(f,\chi_\textnormal{triv},L,\Phi;s)$, called the {\em non-trivial poles}.  Consider the invariant
$$\sigma_{f,L,\Phi}:=\min\{-\Re(s)\mid s\in Pol_{f,L,\Phi}\}\leq +\infty,$$
where we use the convention that $\sigma_{f,L,\Phi}=+\infty$ if $Pol_{f,L,\Phi}=\emptyset$.

Then, for each $\sigma<\sigma_{f,L,\Phi}$, there exists a constant $c_{f,L,\Phi,\sigma}$ (depending on $f,L,\Phi,\sigma$) such that, for all additive character $\psi$ with conductor $m_\psi>0$, we have
$$|E_{f,L,\Phi,\psi}|\leq c_{f,L,\Phi,\sigma}q_L^{-m_\psi \sigma}.$$
Moreover, $\sigma_{f,L,\Phi}$ is the largest number with this property.
\end{cor}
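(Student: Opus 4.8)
The plan is to read off the estimate from the relation in Proposition~\ref{zetaex}, using Lemma~\ref{zerofunction}(i) to reduce to a finite sum and then estimating Taylor coefficients of rational functions by Cauchy's inequality. Concretely, I would fix $\sigma<\sigma_{f,L,\Phi}$ and write an arbitrary additive character of conductor $m:=m_\psi\ge 1$ as $\psi_z$ with $z=u\pi^{-m}$, $u\in\cO_L^*$. By Lemma~\ref{zerofunction}(i) only finitely many multiplicative characters $\chi$ make $Z(f,\chi,L,\Phi;s)$ nonzero, so Proposition~\ref{zetaex} expresses $E_{f,L,\Phi,\psi_z}$ as a \emph{finite} sum of three kinds of terms: the constant $Z(f,\chi_{\mathrm{triv}},L,\Phi;0)$; the coefficient $\mathrm{Coeff}_{t^{m-1}}\bigl(R_{\mathrm{triv}}(t)\bigr)$, where $R_{\mathrm{triv}}(t):=\frac{(t-q_L)Z(f,\chi_{\mathrm{triv}},L,\Phi;s)}{(q_L-1)(1-t)}$ and $t=q_L^{-s}$; and finitely many terms $g_{\chi^{-1}}\chi(u)\,\mathrm{Coeff}_{t^{m-c(\chi)}}\bigl(Z(f,\chi,L,\Phi;s)\bigr)$ with $\chi\ne\chi_{\mathrm{triv}}$. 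The explicit shape of the Gauss sum gives $|g_{\chi^{-1}}|\le q_L/(q_L-1)$, and $|\chi(u)|=1$, so all prefactors are bounded independently of $m$.

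The core step is a coefficient estimate. Each $Z(f,\chi,L,\Phi;s)$ is a rational function of $t$, holomorphic at $t=0$, whose poles $t_0$ correspond to poles $s_0$ via $|t_0|=q_L^{-\Re(s_0)}$. For $\chi\ne\chi_{\mathrm{triv}}$ every such pole lies in $Pol_{f,L,\Phi}$, hence $|t_0|\ge q_L^{\sigma_{f,L,\Phi}}>q_L^{\sigma}$; applying Cauchy's inequality on the circle $|t|=q_L^{\sigma}$ then yields $|\mathrm{Coeff}_{t^{k}}(Z(f,\chi,L,\Phi;s))|\le C_{\chi,\sigma}\,q_L^{-k\sigma}$ for all $k\ge 0$ (the coefficient being $0$ for $k<0$), so these terms are $O(q_L^{-m\sigma})$. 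For $\chi_{\mathrm{triv}}$ I would first split off the harmless pole at $t=1$: since $Z(f,\chi_{\mathrm{triv}},L,\Phi;0)$ is finite, $h(t):=\frac{(t-q_L)Z(f,\chi_{\mathrm{triv}},L,\Phi;s)}{q_L-1}$ is holomorphic at $t=1$, so $R_{\mathrm{triv}}(t)=\frac{h(1)}{1-t}+\widetilde R(t)$ with $h(1)=-Z(f,\chi_{\mathrm{triv}},L,\Phi;0)$, where $\widetilde R$ has poles exactly at the poles of $(q_L^{s+1}-1)Z(f,\chi_{\mathrm{triv}},L,\Phi;s)$, which all lie in $Pol_{f,L,\Phi}$. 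Since $\mathrm{Coeff}_{t^{m-1}}\bigl(\tfrac{h(1)}{1-t}\bigr)=h(1)$ cancels the constant term, the total $\chi_{\mathrm{triv}}$-contribution to $E_{f,L,\Phi,\psi_z}$ equals $\mathrm{Coeff}_{t^{m-1}}(\widetilde R)$, which the same Cauchy estimate bounds by $O(q_L^{-m\sigma})$. Summing the finitely many bounded contributions gives $|E_{f,L,\Phi,\psi_z}|\le c_{f,L,\Phi,\sigma}\,q_L^{-m_\psi\sigma}$, the constant being allowed to depend on $\sigma$ (and to blow up as $\sigma\uparrow\sigma_{f,L,\Phi}$). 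The case $\sigma_{f,L,\Phi}=+\infty$ goes through the same way, all relevant rational functions then being polynomials in $t$, so that $E_{f,L,\Phi,\psi}$ vanishes for $m_\psi$ large.

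For the sharpness clause I would argue conversely: if $|E_{f,L,\Phi,\psi}|\le c\,q_L^{-m_\psi\sigma'}$ for all $\psi$ of conductor $\ge 1$, then, fixing $m$ and choosing $r>1$ larger than all conductors $c(\chi)$ involved, one recovers for each relevant $\chi$ the quantity $g_{\chi^{-1}}\mathrm{Coeff}_{t^{m-c(\chi)}}(Z(f,\chi,L,\Phi;s))$ --- respectively $\mathrm{Coeff}_{t^{m-1}}(\widetilde R)$ for $\chi_{\mathrm{triv}}$, using the identity above --- as a normalized $\chi^{-1}$-twisted average of the $E_{f,L,\Phi,\psi_{u\pi^{-m}}}$ over $u\in(\cO_L/\cM_L^{r})^*$, by orthogonality of characters. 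As Gauss sums are nonzero, these Taylor coefficients are $O(q_L^{-m\sigma'})$, so by Cauchy--Hadamard $Z(f,\chi,L,\Phi;s)$ for $\chi\ne\chi_{\mathrm{triv}}$, and $(q_L^{s+1}-1)Z(f,\chi_{\mathrm{triv}},L,\Phi;s)$, extend holomorphically to $\{|t|<q_L^{\sigma'}\}$; hence $Pol_{f,L,\Phi}$ contains no $s_0$ with $-\Re(s_0)<\sigma'$, that is, $\sigma_{f,L,\Phi}\ge\sigma'$. Combined with the first part, this identifies $\sigma_{f,L,\Phi}$ as the largest number with the stated property.

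I expect the only delicate point to be the bookkeeping for the trivial character: correctly matching the constant $Z(f,\chi_{\mathrm{triv}},L,\Phi;0)$ against the part of $R_{\mathrm{triv}}$ coming from its pole at $t=1$, and verifying that the factor $(t-q_L)$ removes precisely the pole at $s=-1$ (and no other one), so that what is left is governed by $Pol_{f,L,\Phi}$ rather than by the spurious exponent $0$ attached to $t=1$. Everything concerning the nontrivial characters is routine once Lemma~\ref{zerofunction}(i) has reduced the sum to a finite one.
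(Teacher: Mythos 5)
Your proof is correct. The paper states this corollary without proof, presenting it as a direct consequence of Proposition~\ref{zetaex} and Lemma~\ref{zerofunction}(i), and your argument is exactly the standard deduction those references have in mind: use Lemma~\ref{zerofunction}(i) to reduce to finitely many $\chi$, rewrite the trivial-character contribution so that the constant $Z(f,\chi_{\mathrm{triv}},L,\Phi;0)$ cancels the residue at $t=1$ and what remains is governed by the poles of $(q_L^{s+1}-1)Z(f,\chi_{\mathrm{triv}},L,\Phi;s)$, estimate Taylor coefficients by Cauchy on a circle $|t|=q_L^{\sigma}$, and for sharpness invert via orthogonality of the $\chi$ on $(\cO_L/\cM_L^r)^*$ and apply Cauchy--Hadamard. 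You correctly identified the one delicate point, the bookkeeping at $t=1$ for $\chi_{\mathrm{triv}}$, where $t-q_L$ differs from $q_L^{s+1}-1$ only by the nonvanishing factor $-t$. (One trivial slip: the crude bound from the defining formula is $|g_{\chi^{-1}}|\le 1$, not $q_L/(q_L-1)$; this does not affect anything.)
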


\subsection{Denef's formula}\label{Denef's formula}

We recall the description of the Igusa local zeta function using resolution of singularities from \cite{Denef91}. Let $L$ be a non-archimedean local field of characteristic $0$ and $f$  a non-constant polynomial in $L[x_1,\dots,x_n]$. We set $X=\Spec L[x_1,\dots,x_n]$ and $D=\Spec L[x]/(f)$. We fix an embedded resolution $(Y,h)$ of $D$ over $L$. This means here that $Y$ is an integral smooth closed subscheme of projective space over $X$, $h:Y\rightarrow X$ is the natural map, the restriction $h:Y\backslash h^{-1}(D)\rightarrow X\backslash D$ is an isomorphism, and $(h^{-1}(D))_{\text{red}}$ has simple normal crossings as subscheme of $Y$. The existence of such $h$ follows originally from \cite[page 142, Main Theorem II]{Hironaka}.

Let $E_{i}, i\in T$, be the irreducible components of $(h^{-1}(D))_{\text{red}}$. For each $i\in T$, let $N_{i}$ be the multiplicity of $E_{i}$ in the divisor of $f\circ h$ on $Y$ and let $\nu_{i}-1$ be the multiplicity of $E_{i}$ in the divisor of $h^{*}(dx_{1}\wedge\ldots\wedge dx_{n})$. The $(N_{i},\nu_{i})_{i \in T}$ are called the \emph{numerical data} of the resolution $(Y,h)$. For each subset $I\subset T$, we consider the schemes
\[
E_{I}:=\cap_{i\in I} E_{i} \qquad \text{and} \qquad \overset{\circ}{E_{I}}:=E_{I}\backslash\cup_{j\in T\backslash I}E_{j}.
\]
In particular, when $I=\emptyset$ we have $E_{\emptyset}=Y$.\\

If $Z$ is a closed subscheme of $Y$, we denote the reduction mod $\cM_L$ of $Z$ by $\overline{Z}$ (see \cite{Shim}). We say that the resolution $(Y,h)$ of $f$ has \emph{good reduction modulo $\cM_L$} if $\overline{Y}$ and all $\overline{E_{i}}$ are smooth, $\cup_{i\in T}\overline{E_{i}}$ has normal crossings, and the schemes $\overline{E_{i}}$ and $\overline{E_{j}}$ have no common components whenever $i\neq j$. In addition, if $N_i\notin\cM_L$ for all $i\in T$, then  we say that $(Y,h)$ has {\em tame good reduction modulo $\cM_L$}.

Now we assume that $(Y,h)$ has tame good reduction modulo $\cM_L$. For $I\subset T$, one then verifies that $\overline{E}_{I}=\cap_{i\in I}\overline{E}_{i}$, and we put analogously $\overset{\circ}{\overline{E}}_{I}:=\overline{E}_{I}\backslash\cup_{j\notin I}\overline{E}_{j}$. Let $a$ be a closed point of $\overline{Y}$ and $T_{a}=\{i\in T \mid a\in \overline{E}_{i}\}$. In the local ring of $\overline{Y}$ at $a$ we can write
\[
\overline{f}\circ\overline{h}=\overline{u}\prod_{i\in T_{a}}\overline{g}_{i}^{N_{i}},
\]
where $\overline{u}$ is a unit, $(\overline{g}_{i})_{i\in T_{a}}$ is a part of a regular system of parameters and $N_{i}$ is as above. Each multiplicative character $\chi$ of conductor $1$ induces a multiplicative character $\chi:k_L^*\to\CC^*$. This induces the function $$\Omega_\chi:\overline{Y}(k_L)\to\CC : a \mapsto \chi(\overline{u}(a)). $$

\begin{thm}[\cite{Denef91}, Theorem 2.2 or \cite{DenefBourbaki}, Theorem 3.4]\label{goodreduction}
Let $L$ be a non-archimedean local field of characteristic $0$ and $f$  a non-constant polynomial in $L[x_1,\dots,x_n]$ with $\overline{f}:=f \mod \cM_L \neq 0$. Suppose that $D$ admits an embedded resolution $(Y,h)$ over $L$ with  tame good reduction modulo $\cM_L$. Let $\chi$ be a character on $\cO_{L}^*$ of order $d$, which is trivial on $1+\cM_L$. If $\Phi$ is residual, then we have
\[
Z(f,\chi,L,\Phi;s)=q_L^{-n}\sum_{\substack{I\subset T\\ \forall i\in I: d\mid N_{i}}}c_{I,\Phi,\chi}\prod_{i\in I}\frac{(q_L-1)q_L^{-N_{i}s-\nu_{i}}}{1-q_L^{-N_{i}s-\nu_{i}}},
\]
where
\[
c_{I,\Phi,\chi}=\underset{a\in\overset{\circ}{\overline{E}}_{I}(k_{L})}{\sum}\overline{\Phi}(\overline{h}(a))\Omega_{\chi}(a).
\]
\end{thm}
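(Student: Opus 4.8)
The plan is to follow Denef's standard strategy: pull back the integral defining $Z(f,\chi,L,\Phi;s)$ along the resolution $h$, decompose the resulting domain according to reduction modulo $\cM_L$, and evaluate a one-variable $p$-adic integral on each piece. First I would use properness of $Y$ over $X$ to identify $Y(L)=Y(\cO_L)$, so that the $p$-adic change of variables formula applied to the analytic isomorphism induced by $h$ from $Y(\cO_L)$ minus the $\cO_L$-points of $h^{-1}(D)$ onto $L^n\setminus D(L)$ (the complement of a set of measure zero on each side) gives
$$Z(f,\chi,L,\Phi;s)=\int_{Y(\cO_L)}(\Phi\circ h)(y)\,\chi\bigl(ac(f(h(y)))\bigr)\,|f(h(y))|_L^{s}\,|h^{*}(dx)|.$$

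Next I would exploit the good reduction hypothesis. Smoothness of $\overline Y$ makes the reduction map $Y(\cO_L)\to\overline Y(k_L)$ surjective with each fibre an analytic polydisc, and the normal crossings condition together with $N_i\notin\cM_L$ lets me choose, over a point $a\in\overset{\circ}{\overline{E}}_{I}(k_L)$ (so that $T_a=I$), local coordinates $y_1,\dots,y_n$ on the $\cO_L$-model centred at $a$ such that the fibre over $a$ is $\cM_L^{n}$ in these coordinates, $g_i=y_i$ for $i\in I$, $f\circ h=u\prod_{i\in I}y_i^{N_i}$ with $u$ a unit, and $h^{*}(dx_1\wedge\cdots\wedge dx_n)=v\prod_{i\in I}y_i^{\nu_i-1}\,dy_1\wedge\cdots\wedge dy_n$ with $v$ a unit. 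Since $\Phi$ is residual, $(\Phi\circ h)(y)=\overline\Phi(\overline h(a))$ on this fibre; since $\chi$ has conductor at most $1$, $\chi\bigl(ac(f(h(y)))\bigr)=\chi(\overline u(a))\prod_{i\in I}\chi(\ac(y_i))^{N_i}$. Hence the integral over the fibre over $a$ factors as a product over the $n$ coordinates, the $n-\abs{I}$ ``free'' coordinates each contributing $\int_{\cM_L}|dy_j|=q_L^{-1}$, and each coordinate $y_i$ with $i\in I$ contributing $\int_{\cM_L}\chi(\ac(y))^{N_i}\,|y|_L^{N_is+\nu_i-1}\,|dy|$.

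The heart of the matter is this one-variable integral: writing $y=\pi^{k}w$ with $k\ge 1$ and $w\in\cO_L^{*}$, it equals $\sum_{k\ge 1}q_L^{-k(N_is+\nu_i)}\int_{\cO_L^{*}}\chi(\ac(w))^{N_i}\,|dw|$, and since $\chi$ factors through an order-$d$ character of $k_L^{*}$, the inner integral is $1-q_L^{-1}$ when $d\mid N_i$ and $0$ otherwise. In the first case the sum equals $(1-q_L^{-1})q_L^{-N_is-\nu_i}/(1-q_L^{-N_is-\nu_i})=q_L^{-1}(q_L-1)q_L^{-N_is-\nu_i}/(1-q_L^{-N_is-\nu_i})$. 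Collecting the $\abs{I}$ factors $q_L^{-1}$ coming from the coordinates indexed by $I$ together with the $n-\abs{I}$ factors $q_L^{-1}$ from the free coordinates yields the global prefactor $q_L^{-n}$; summing $\overline\Phi(\overline h(a))\chi(\overline u(a))=\overline\Phi(\overline h(a))\Omega_\chi(a)$ over $a\in\overset{\circ}{\overline{E}}_{I}(k_L)$ produces $c_{I,\Phi,\chi}$; and the vanishing of the inner integral unless $d\mid N_i$ restricts the outer sum to those $I$ with $d\mid N_i$ for all $i\in I$. This is exactly the asserted identity.

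The step I expect to be the main obstacle is the reduction-theoretic bookkeeping: confirming that good reduction genuinely provides, on each residue disc of $Y(\cO_L)$, one choice of local coordinates in which the divisor $(h^{-1}(D))_{\mathrm{red}}$, the pullback $f\circ h$, and the Jacobian $h^{*}(dx)$ all simultaneously assume their normal-crossings form, so that the fibrewise integral splits as a product as above. This is where properness of $Y$, smoothness of $\overline Y$ and of all $\overline E_i$, the normal-crossings and no-common-component conditions on the $\overline E_i$, and the tameness hypothesis $N_i\notin\cM_L$ are all genuinely used.
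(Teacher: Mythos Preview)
The paper does not give its own proof of this theorem; it is quoted as a background result from \cite{Denef91} and \cite{DenefBourbaki}. Your sketch is correct and is precisely Denef's original argument: pull back to $Y(\cO_L)$ via the $p$-adic change of variables, fibre over $\overline{Y}(k_L)$ using smoothness of the model, put $f\circ h$ and the Jacobian in monomial form on each residue disc via the normal-crossings data, and reduce to the elementary one-variable integral $\int_{\cM_L}\chi(\ac y)^{N_i}|y|^{N_is+\nu_i-1}|dy|$, which is nonzero exactly when $d\mid N_i$.
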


Let $f$ be a non-constant polynomial in $K[x_1,\dots,x_n]$ for some number field $K$ and fix an embedded resolution $(Y,h)$ of $f$ over $K$. Then, for all non-archimedean local fields $L$ with an embedding $K\to L$, we can consider $f$ as a polynomial over $L$ and $(Y,h)$ as an embedded resolution of $f$ over $L$. For all but finitely many  non-archimedean completions $L$ of $K$, that resolution has tame good reduction modulo $\cM_L$ (see \cite[Theorem 2.4]{Denef87}).
We also remark that, if $f$ has tame good reduction mod $\cM_L$, then it has tame good reduction mod $\cM_{L'}$ for all finite extension $L'$ of $L$.

\subsection{Motivic oscillation index}

We elaborate on the notion motivic oscillation index, introduced in \cite{CluckersIMRN} and further developed in \cite{CMN}.
We want to associate it to a polynomial $f$ over a number field and a reduced subscheme of affine space. It depends on poles of local zeta functions over all non-archimedean local fields $L\supset K$ for which $f$ is defined over $\cO_L$. To this end, we fix the following set-up.

Let $f\in\overline{\QQ}[x_1,\dots,x_n]$ be a non-constant polynomial and $Z$ a reduced subscheme of $\AA_{\overline{\QQ}}^n$.
Let $\cO$ be a finitely generated $\ZZ$-subalgebra of $\overline{\QQ}$ over which $f$ and $Z$ are defined.
With the latter we mean that $Z$ is induced by a subscheme of $\AA_{\cO}^n$, that we denote also by $Z$.
We say that a non-archimedean field $L$ of characteristic zero is {\em over $\cO$} if there exists an embedding $\cO\to \cO_L$.
For such $L$ we define the Schwartz-Bruhat function $\Phi_{L,Z}:=\11_{\{x\in\cO_L^n| \overline{x}\in Z(k_L)\}}$.

For a positive integer $M$, we denote by $\cL_M$
the set of all non-archimedean local fields of characteristic $0$ with residue field characteristic at least $M$, and
 by $\cL_{K,M}$
 the set of all non-archimedean local fields in $\cL_M$
endowed with an embedding  of a number field $K$.

\medskip
(1) We first consider the situation where $Z(\CC)\subset f^{-1}(0)$.
 The {\em motivic oscillation index of $f$ at $Z$ over a number field $K\supset\cO$} is given by
$$\moi_{K}(f,Z):=\liminf_{M\to +\infty, L\in\cL_{K,M}}\sigma_{f,L,\Phi_{L,Z}},$$
where $\sigma_{f,L,\Phi_{L,Z}}$ is as in Corollary  \ref{invariant sigma}.
More concretely, this means the following.
If $\moi_K(f,Z)<+\infty$, then there are infinitely many non-archimedean local fields $L$ over $\cO$ with an embedding $K\to L$,
such that $-\moi_{K}(f,Z)$ is the real part of a non-trivial pole of a zeta function $Z(f,\chi,L,\Phi_{L,Z};s)$, and there exists an integer $M$ such that, for all local fields $L\in\cL_{K,M}$, all non-trivial poles of $Z(f,\chi,L,\Phi_{L,Z};s)$ have real part at most $-\moi_{K}(f,Z)$.
Actually, $\moi_{K}(f,Z)=+\infty$ or $\moi_K(f,Z)$ is a positive rational number, equal to some $\frac{\nu_i}{N_i}, i\in T$, when fixing an embedded resolution of $\{f=0\}$. In particular, if $Z=\emptyset$, then we use the convention that $\moi_K(f,Z)=+\infty$ (we use this in (2) below).

\smallskip
(2) For general $Z$, we have to consider the (finitely many) critical values $c_i \in \overline{\QQ}$ of $f$. Put $\cO_i=\cO[c_i]$ and let $Z_i$ be the intersection of $Z\otimes_\cO\cO_i$ with $\{f=c_i\}$ in $\AA^n_{\cO_i}$. Then the motivic oscillation index of $f$ at $Z$ over a number field $K\supset\cO$ is $\moi_{K}(f,Z):=\min_i \moi_{K(c_i)}(f -c_i,Z_i)$.

\medskip
For $K \supset \cO$, take an embedded resolution $(Y,h)$ of $f^{-1}(0)$ over $K$. Then $(Y,h)$ induces an embedded resolution $(Y_L,h_L)$ over any  local field $L\in\cL_{K,1}$ over $\cO$. In particular, there exists an integer $M$ such that  $(Y_L,h_L)$ has tame good reduction modulo $\cM_L $ for all $L\in\cL_{K,M}$ over $\cO$.


Remark that we only talk about  Igusa local zeta functions over a local field  of characteristic $0$, since resolution of singularities is in general not known yet in positive characteristic, and hence a description in terms of an embedded resolution is not available for such zeta functions over a local field of positive characteristic. However, there exists an integer $M$ such that, for all non-archimedean local fields  $L$ of characteristic at least $M$ with an $\cO$-structure, the \lq induced $f$\rq\ in $L[x_1,\dots,x_n]$ has an embedded resolution with tame good reduction modulo $\cM_L$, and so the above claims can be extended to these fields. In particular, we have in this setting a good correspondence  between local fields in $\cL_{K,M}$ and their positive characteristic analogues with the same residue field. This is the content of transfer principles, on which we elaborate in Section \ref{section5}.


\section{Proof of Main theorem \ref{mainthm1}}\label{section3} Take polynomials $f,g$ and  $P,Q\in K^n$ as in the statement of Main theorem \ref{mainthm1}, satisfying thus
  $(f,P)\sim_K(g,Q)$. Since $(f,P)\sim_K (f(x+P),0)$ and the statement of Main theorem \ref{mainthm1} obviously holds for $(f,P)$ and $(f(x+P),0)$, we may and will suppose from now on that $P=Q=0$. The following lemma gives us (local) embedded resolutions of $f^{-1}(0)$ and $g^{-1}(0)$, which are compatible with the analytic isomorphism $\theta$.

\begin{lem}\label{anaiso} Let $K$ be a number field and $f,g$  two non-constant polynomials in $K[x_1,\dots,x_n]$ such that $f(0)=g(0)=0$. Suppose that there exist complex  neighbourhoods $U, V$ of $0\in\CC^n$, a biholomorphic function $\theta:U\to V$ satisfying $\theta(0)=0$, and an invertible holomorphic function $v:V\to \CC$ such that $f=(g.v)\circ\theta$. Then there exist embedded resolutions $(Y_1,h_1)$ of $(\AA_{\overline{K}}^n,f^{-1}(0))$ and $(Y_2,h_2)$ of $(\AA_{\overline{K}}^n,g^{-1}(0))$ at $0$  and a biholomorphic function $\Theta:\tilde{U}=h_1^{-1}(U)\to \tilde{V}=h_2^{-1}(V)$ such that $f\circ h_1|_{\tilde{U}}= (g.v)\circ h_2|_{\tilde{V}}\circ\Theta$.
\end{lem}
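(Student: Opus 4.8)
The plan is to build the two embedded resolutions simultaneously, so that the analytic isomorphism $\theta$ lifts to all intermediate stages. First I would invoke embedded resolution of singularities over $\overline{K}$ (Hironaka) to obtain an embedded resolution $(Y_1, h_1)$ of $(\AA^n_{\overline{K}}, f^{-1}(0))$ which is a composition of blow-ups along smooth centers, locally over $0$. The key idea is that each blow-up center used for $f$, being determined by the local analytic (indeed, algebraic) structure of $f$ near $0$, is carried by $\theta$ to a smooth analytic subset of $V$; since $f = (g\cdot v)\circ\theta$ and $v$ is an invertible holomorphic function, the singular/non-normal-crossing locus of $f$ near $0$ maps under $\theta$ to that of $g$ near $0$ (multiplication by a unit does not change the divisor), so the analytic resolution data for $g$ is the $\theta$-image of that for $f$. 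The honest point to check is that an \emph{algebraic} embedded resolution $(Y_2, h_2)$ of $g$ over $\overline{K}$ can be chosen whose analytification agrees, over a neighbourhood of $0$, with the one transported from $Y_1$ via $\theta$; this is where one uses that the centers can be taken algebraic (they are, for an embedded resolution of the algebraic variety $g^{-1}(0)$) and that two algebraic resolutions which agree analytically near a point can be compared — more simply, one just runs Hironaka's algorithm for $g$, which is canonical enough that its analytification near $0$ is intrinsic to the analytic germ $(g^{-1}(0),0)$, hence matches the $\theta$-transported tower.

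Concretely I would proceed by induction on the number of blow-ups in the resolution tower for $f$. At stage $k$ we have $Y_1^{(k)} \to \AA^n$ and $Y_2^{(k)} \to \AA^n$, an analytic isomorphism $\Theta_k$ between the preimages of $U$ and $V$, and an invertible holomorphic function $v_k$ on the target with $f\circ h_1^{(k)} = (g\cdot v)\circ h_2^{(k)} \circ \Theta_k \cdot (\text{automatic})$ — more precisely $f\circ h_1^{(k)}|_{\tilde U_k} = ((g\circ h_2^{(k)})\cdot (v\circ h_2^{(k)}))\circ\Theta_k$, and $v\circ h_2^{(k)}$ stays an invertible holomorphic function. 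The next center $C_1^{(k+1)} \subset Y_1^{(k)}$ dictated by Hironaka's algorithm for $f$ corresponds under $\Theta_k$ to a smooth analytic center in $Y_2^{(k)}$, which — because the algorithm is functorial for the local analytic data and because multiplication by the holomorphic unit $v\circ h_2^{(k)}$ does not affect it — coincides analytically near the relevant points with the next center $C_2^{(k+1)}$ dictated by the algorithm for $g$. Blowing up and using that blow-up commutes with the analytic isomorphism on the locus where the centers match, one extends $\Theta_k$ to $\Theta_{k+1}$. After finitely many steps $f^{-1}(0)$ pulls back to a simple normal crossings divisor on $Y_1 := Y_1^{(N)}$; since $\theta$ (now $\Theta_N$) is a biholomorphism and $v\circ h_2^{(N)}$ is a unit, $g^{-1}(0)$ pulls back to an SNC divisor on $Y_2 := Y_2^{(N)}$ near $h_2^{-1}(0)$ as well, so $(Y_2,h_2)$ is an embedded resolution of $(\AA^n, g^{-1}(0))$ at $0$, and $\Theta := \Theta_N$ satisfies $f\circ h_1|_{\tilde U} = (g\cdot v)\circ h_2|_{\tilde V}\circ\Theta$ with the convention that $v$ here denotes $v\circ h_2$, still holomorphic invertible.

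The main obstacle I anticipate is the bookkeeping around \emph{algebraicity versus analyticity} of the resolution data: Hironaka's algorithm for the algebraic variety $g^{-1}(0)$ produces algebraic centers, but a priori $\theta$ only transports the \emph{analytic} germ, so one must argue that the $\theta$-transported analytic tower over $V$ is in fact (locally) the analytification of the algebraic tower for $g$. This rests on the functoriality/canonicity of modern (Bierstone–Milman, Villamayor, Encinas–Villamayor) embedded resolution with respect to smooth morphisms and, crucially, its invariance under étale/analytic-local base change and under multiplication of the defining function by a unit; granting that canonicity, the analytic germ at $0$ determines the local tower, and the matching is automatic. A secondary, purely technical point is that the resolutions are only required \emph{at $0$} — i.e. over a neighbourhood — so one may freely shrink $U$ and $V$ at each stage, which sidesteps any issue of globalizing $\Theta$ beyond $h_i^{-1}$ of these neighbourhoods.
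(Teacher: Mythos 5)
Your proof is correct and takes essentially the same approach as the paper: the paper's own proof is a one-line citation to functorial constructive embedded resolution (Bravo--Encinas--Villamayor, Proposition 9.2, together with Temkin for the complex analytic setting), and your inductive argument spells out exactly what that functoriality buys. You correctly identify the crux as the matching between the algebraic resolution tower for $g$ and the $\theta$-transported analytic tower for $f$ near $0$; that compatibility between the algebraic and analytic functorial algorithms is precisely the content of Temkin's theorem, which the paper cites for this purpose.
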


\begin{proof}  This is a consequence of functorial constructive embedded resolution, as in
\cite[Proposition 9.2]{Bravo-Encinas-Villamayor}, and in \cite{Temkin} for the complex analytic setting.
\end{proof}

We take $(Y_1,h_1), (Y_2,h_2), \Theta$ as in Lemma \ref{anaiso}. For each $a\in h_1^{-1}(0)$ there exists a Zariski neighbourhood $U_a$ of $a\in Y_1$ such that we can write
$f\circ h_1$ as $u_a\prod_{i\in T_a} z_i^{N_i}$ on $U_a$,
where $(z_i)_{i\in T_a}$ is a part of a regular system of parameters in $U_a$ and $u_a$ is an invertible regular function on $U_a$. Similarly, for each $b\in h_2^{-1}(0)$ there exists a Zariski neighbourhood $V_b$ of $b\in Y_2$ such that we can write
$g\circ h_2$ as $v_b\prod_{i\in T_b} x_i^{N_i}$ on $V_b$,
where $(x_i)_{i\in T_b}$ is a part of a regular system of parameters in $V_b$ and $v_b$ is an invertible regular function on $V_b$. Here we can and will assume that  $U_a$, $u_a$, all $z_i$, $V_b$, $v_b$ and all $x_i$ are defined over $\overline{K}$.

\begin{lem}\label{overC}With the notations above, there exists a $\CC$-isomorphism $\phi:h_1^{-1}(0)\to h_2^{-1}(0)$ of algebraic varieties, such that for each $a\in h_1^{-1}(0)$, for each choice of $U_a, u_a, z_i, V_{\phi(a)}, v_{\phi(a)},x_i$, for each smooth subvariety $W$ of the Zariski neighbourhood $W_a=(U_a\cap h_1^{-1}(0))\cap \phi^{-1}(V_{\phi(a)}\cap h_2^{-1}(0))$ of $a$ in $h_1^{-1}(0)$, there exists a regular function $r_{W}$ on $W$ such that $$u_a|_{W}= (v_{\phi(a)}|_{\phi(W)}\circ\phi) \cdot r_W^d ,$$ where $d=\gcd\{N_i \mid i\in T_a\}=\gcd\{N_i \mid i\in T_{\phi(a)}\}$.

\end{lem}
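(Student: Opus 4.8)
The plan is to construct $\phi$ from the biholomorphic map $\Theta$ of Lemma \ref{anaiso} and to use the GAGA theorem to algebraize it. First I would restrict $\Theta$ to the exceptional fibres: since $f = (g.v)\circ\theta$ with $v$ invertible, we have $\theta^{-1}(g^{-1}(0)) = f^{-1}(0)$ near $0$, so $\Theta$ carries $h_1^{-1}(0)$ biholomorphically onto $h_2^{-1}(0)$ (these are the strict preimages of $0$, hence compact complex analytic varieties, being closed subvarieties of projective varieties over $X$ that lie over a point). Thus $\Theta|_{h_1^{-1}(0)} \colon h_1^{-1}(0)^{\mathrm{an}} \to h_2^{-1}(0)^{\mathrm{an}}$ is an isomorphism of compact complex analytic spaces between the analytifications of projective $\CC$-varieties; by GAGA (\cite{SerreGAGA}) it is induced by a unique isomorphism $\phi \colon h_1^{-1}(0) \to h_2^{-1}(0)$ of $\CC$-varieties. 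This gives the map $\phi$; note it need not be defined over $\overline{K}$, which is why the statement is only over $\CC$ — Main theorem \ref{mainthm1} will later need model theory of $\mathrm{ACF}_0$ to descend it, but that is not needed here.

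Next I would establish the relation between the units. On the Zariski open $W_a \subset h_1^{-1}(0)$ we have, on a complex neighbourhood, $f\circ h_1 = u_a \prod_{i\in T_a} z_i^{N_i}$ and $(g.v)\circ h_2\circ\Theta = (v_{\phi(a)}\circ\Theta)\cdot (v\circ h_2\circ \Theta)\cdot \prod_{i\in T_{\phi(a)}}(x_i\circ\Theta)^{N_i}$, and these two functions agree by Lemma \ref{anaiso}. Comparing multiplicities of the exceptional components through $a$ forces $T_a$ and $T_{\phi(a)}$ (viewed as index sets of components meeting $a$, resp.\ $\phi(a)$) to correspond and the $N_i$ to match, so in particular $d := \gcd\{N_i : i\in T_a\} = \gcd\{N_i : i\in T_{\phi(a)}\}$. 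Writing $z_i\circ\Theta$ in terms of the $x_j\circ\Theta$ and the invertible analytic factors, one gets an invertible holomorphic function $w$ on a complex neighbourhood of $a$ in $h_1^{-1}(0)$ (involving $v\circ h_2\circ\Theta$ and the ratios of the two systems of parameters) with $u_a = (v_{\phi(a)}\circ\Theta)\cdot w^{\,?}$; the point is to extract a $d$-th power. Because $d \mid N_i$ for all $i\in T_a$, the ambiguity in splitting the monomial parts is exactly a $d$-th power of an invertible function, so one obtains an invertible holomorphic $r_W$ on (a neighbourhood of $a$ in) $W$ with $u_a|_W = (v_{\phi(a)}|_{\phi(W)}\circ\phi)\cdot r_W^{\,d}$ as analytic functions.

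Finally I would upgrade this analytic identity to a regular one. The quotient $u_a|_W / (v_{\phi(a)}|_{\phi(W)}\circ\phi)$ is a regular (indeed invertible regular) function on $W$, and the computation above exhibits it locally analytically as a $d$-th power; I would then invoke that a regular function on a smooth variety which is an analytic $d$-th power in a neighbourhood of a point is a regular $d$-th power there (shrinking $W$ if necessary, or passing to the étale/Zariski-local statement, using that $W$ is smooth so its local ring is a UFD and $d$-th roots of units can be taken after a finite cover that splits by the assumed compatibility). This produces the regular function $r_W$ on $W$ with the asserted identity. The main obstacle I anticipate is the bookkeeping in the second step: carefully matching the two regular systems of parameters under $\Theta$, checking that the gcd's coincide, and isolating the $d$-th power cleanly — i.e.\ making precise why the discrepancy between the two monomial factorizations is a $d$-th power and not just an arbitrary unit. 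The GAGA algebraization (step one) and the analytic-to-algebraic passage for $d$-th powers (step three) are comparatively routine.
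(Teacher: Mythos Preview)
Your plan is correct and follows the paper's proof: restrict $\Theta$ to the exceptional fibres, algebraize via GAGA, compare the two local factorizations to write $x_i\circ\Theta = z_i\cdot t_i$ and take $t_W=\alpha\prod_{i\in T_a} t_i^{N_i/d}\big|_W$ (with $\alpha^d=v(0)$, noting $(v\circ h_2\circ\Theta)|_{h_1^{-1}(0)}\equiv v(0)$), then upgrade this holomorphic root to a regular one. For the last step the paper isolates a clean global statement (Lemma~\ref{holo-regular}: on a smooth quasi-projective variety, a holomorphic $t$ with $t^d$ regular invertible is itself regular) proved by a Kummer-cover/connectedness argument; since $t_W$ is already holomorphic on all of $W$, no shrinking or local patching is needed, and your UFD remark is not the relevant mechanism.
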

\begin{proof}
We take $\phi=\Theta|_{h_1^{-1}(0)}$, where $\Theta$ is the biholomorphic map from Lemma \ref{anaiso}. Hence, we know already that  $\phi$ is an analytic isomorphism from  $h_1^{-1}(0)$ onto $h_2^{-1}(0)$. But $h_1^{-1}(0)$ and $h_2^{-1}(0)$ can be viewed as algebraic closed subsets of suitable complex projective spaces, and then, by the GAGA theorem \cite{SerreGAGA}, we can suppose that $\phi$ is a $\CC$-isomorphism of two algebraic varieties. Lemma \ref{anaiso} shows further that
$$(u_a \cdot\prod_{i\in T_a} z_i^{N_i})|_{\tilde{W}_a}=((v\circ h_2)\cdot v_{\phi(a)}\cdot \prod_{i\in T_{\phi(a)}} x_i^{N_i})|_{\Theta(\tilde{W}_a)}\circ\Theta$$
on the open subset $\tilde{W}_a=\tilde{U}\cap U_a\cap \Theta^{-1}(V_{\phi(a)}\cap\tilde{V})$. So we can identify  $T_a$ with $T_{\phi(a)}$ and suppose that $x_i|_{\Theta(\tilde{W}_a)}\circ\Theta= z_i|_{\tilde{W}_a}\cdot t_i$ for an invertible holomorphic function $t_i$ on $\tilde{W}_a$. Denoting $\tilde{t}=\prod_{i\in T_a}t_i^{N_i/d}$,  we have
$$u_a|_{\tilde{W}_a}=\tilde{t}^d \cdot ((v\circ h_2) \cdot v_{\phi(a)})|_{\Theta(\tilde{W}_a)}\circ\Theta$$
on $\tilde{W}_a$. So for each smooth subvariety $W$ of $W_a$ we have
$$u_a|_{W}=t_W^d \cdot (v_{\phi(a)}|_{\phi(W)}\circ\phi),$$
where $t_W$ is the non-vanishing holomorphic function $\alpha \tilde{t}|_{W}$ for some $d^{th}$ root $\alpha$ of $v(0)$. Lemma \ref{holo-regular} below shows that $t_W$ is a regular function on $W$, which finishes the proof.
\end{proof}

\begin{lem}\label{holo-regular} Let $X$ be a smooth quasi-projective variety over $\CC$ and $w$ an invertible regular function on $X$. Suppose that there exists a holomorphic function $t:X\to \CC$ such that $w=t^d$ for some positive integer $d$. Then $t$ is a regular function on $X$.
\end{lem}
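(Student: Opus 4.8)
The plan is to reduce to the irreducible (hence connected) case and to exploit the monodromy of a $d$-fold covering. First I would observe that it suffices to treat the case where $X$ is connected, since a regular function on a disjoint union is regular on each component, and an invertible regular function restricted to a component is again invertible. Since $X$ is smooth and connected, it is irreducible; let $K(X)$ denote its function field. The hypothesis says $w=t^d$ for a holomorphic function $t$, so in particular $t$ is an analytic $d$-th root of the regular function $w$. I would like to conclude that $t$ lies in $K(X)$ and is regular there.

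The key algebraic input is that the polynomial $T^d - w \in \cO_X(X)[T]$ factors in a controlled way. Consider the finite covering $\pi : \widetilde X \to X$ obtained by normalizing $X$ in the ring $\cO_X(X)[T]/(T^d-w)$; equivalently, over the open locus this is the étale cover $\{(x,\zeta) : \zeta^d = w(x)\}$, which is étale of degree $d$ everywhere since $w$ is nowhere zero and $X$ is normal. On the associated complex analytic space $\widetilde X^{\mathrm{an}}$, the holomorphic function $t$ provides a continuous (holomorphic) section of $\pi^{\mathrm{an}}$, because $(x,t(x))$ satisfies $t(x)^d = w(x)$. Hence $\widetilde X^{\mathrm{an}}$ contains a connected component mapping isomorphically onto $X^{\mathrm{an}}$ via $\pi^{\mathrm{an}}$. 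By a GAGA-type argument — or more elementarily by the fact that connected components of a finite étale cover of an irreducible variety correspond to a decomposition valid algebraically (Riemann existence / the equivalence of finite étale covers and finite covers of the analytic space, cf. \cite{SerreGAGA}) — this component is algebraic: there is an algebraic section $s : X \to \widetilde X$ of $\pi$ over $X$. Composing $s$ with the tautological function $T$ gives a regular function $\tau \in \cO_X(X)$ with $\tau^d = w$ and $\tau^{\mathrm{an}} = t$ (after possibly adjusting $s$ by a $d$-th root of unity so that the analytic sections match on the connected $X$). Therefore $t = \tau$ is regular.

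Alternatively, avoiding the covering language: I would argue directly that $t \in K(X)$ and is integral over $\cO_X(X)$. Since $X$ is smooth it is normal, so $\cO_X(X)$ is integrally closed in $K(X)$; as $t$ satisfies $T^d - w = 0$ with $w \in \cO_X(X)$, once we know $t \in K(X)$ we get $t \in \cO_X(X)$ immediately. To see $t \in K(X)$: restrict to a Zariski-dense affine open $U$ where things are convenient; on $U^{\mathrm{an}}$ the function $t$ is a holomorphic root of $w$, and $K(X) = K(U)$. The étale algebra $A = \cO_X(U)[T]/(T^d-w)$ is a product of field extensions of $K(U)$ corresponding to the Galois orbits of $d$-th roots of $w$ in $\overline{K(U)}$; the holomorphic section $t$ picks out one of the idempotents of $A^{\mathrm{an}} = A \otimes_{K(U)} \cdots$, and by connectedness of $U^{\mathrm{an}}$ together with faithfully flat descent (or simply: $A \hookrightarrow A^{\mathrm{an}}$ reflects idempotents, which again is GAGA on the finite scheme $\operatorname{Spec} A$ over $U$), that idempotent is already in $A$, cutting out a factor isomorphic to $K(U)$; its image of $T$ is the required element of $K(X)$ matching $t$.

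The main obstacle I expect is the globalization/descent step: producing the algebraic section $s$ (equivalently, showing the relevant idempotent of the étale algebra is algebraic rather than merely analytic). This is exactly where irreducibility/connectedness of $X^{\mathrm{an}}$ is essential — without it the holomorphic $d$-th root could ``jump'' between algebraic sheets on different components — and it is the point where one genuinely invokes GAGA (or the Riemann existence theorem) rather than pure commutative algebra. Everything else is formal: reduction to the connected case, normality giving integral closedness, and matching the two $d$-th roots up to a root of unity on the connected space.
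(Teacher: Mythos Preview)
Your proof is correct and shares the core idea with the paper's: both consider the Kummer covering $Y=\{(x,y)\in X\times\CC^*: y^d=w(x)\}$ and exploit the holomorphic section $x\mapsto (x,t(x))$. The execution differs. You argue directly that this section cuts out an analytic connected component of $Y$, which must be algebraic; you invoke GAGA/Riemann existence for this step, though the more elementary fact that an irreducible complex variety is analytically connected (so Zariski and analytic connected components of the smooth variety $Y$ coincide) already suffices. The paper instead argues by contradiction: after reducing to the case where $w$ is not a $(d/d')$-th power for any proper divisor $d'$, integral closedness of $\cO_X(X)$ forces $Y$ to be irreducible, hence analytically connected; but the holomorphic root $t$ decomposes $Y^{\mathrm{an}}$ into the $d$ disjoint nonempty closed pieces $\{y=\zeta t(x)\}$, $\zeta^d=1$, a contradiction. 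Once a regular $d$-th root $r$ is produced, the paper matches $t$ with some $\zeta r$ by the same connectedness argument applied to $X$ itself, just as you do. Your approach is more conceptual and generalizes cleanly to arbitrary finite \'etale covers; the paper's is more self-contained, needing only point-set connectedness of irreducible varieties and no genuine appeal to GAGA or Riemann existence.
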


\begin{proof}
Suppose that there is no regular function $r$ on $X$ such that $w=r^d$. We may then also assume that there is no regular function $w'$ on $X$ such that $w=(w’)^{d/d'}$ for some non-trivial divisor $d'$ of $d$.  (Indeed, otherwise we can reduce to this case by considering $w'$ and  $d'$ instead of $w$ and $d$.)

Since $X$ is smooth, $\cO_X(X)$ is an integrally closed domain, and hence the Kummer covering $Y$ of $X$ given by $Y=\{(x,y)\in X\times\CC^* \mid w(x)=y^d\}$ is irreducible for the Zariski topology, and consequently connected for the complex topology. Since $t$ is a holomorphic function, we have that  $Y_i=\{(x,y)\in Y \mid y=\alpha_it(x)\}, 1\leq i\leq d$, are closed subsets of $Y$ for the complex topology, where $\alpha_1,\dots,\alpha_d$ are the $d^{th}$ roots of $1$. But $Y_i\cap Y_j=\emptyset$ since $t(x)\neq 0$ for all $x\in X$. We obtained a contradiction with the connectedness of $Y$ for the complex topology. So there exists a regular function $r:X\to\CC$ such that $r^d=w$. Then the sets $X_i=\{x\in X \mid t(x)=\alpha_i r(x)\}$ are disjoint closed subsets for the complex topology of $X$, since $t(x)\neq 0$ for all $x\in X$. On the other hand, we have that $X$ is connected for the complex topology, so there exists exactly one index $i$ such that $X_i=X$ and $X_j=\emptyset$ if $j\neq i$. So $t=\alpha_i r$ is a regular function on $X$.
\end{proof}

Our final task is to show that the isomorphism $\phi$ in Lemma \ref{overC} can  be defined over $\overline{K}$. This will follow from the fact that the existence of $\phi$ can be described as a finite set of polynomial conditions. To this end we fix some notation and terminology.

Let $h_1^{-1}(0)=\bigsqcup_i W_i$ be the canonical stratification of $h^{-1}(0)$, i.e., $W_{i+1}$ is the set of smooth points of $\operatorname{Sing}(\overline{W_i})$, and let $W_i=\bigsqcup_{j}W_{ij}$ be the decomposition of $W_i$ into irreducible components. Then all $W_{ij}$ are smooth and defined over $\overline{K}$. Since $\phi: h_1^{-1}(0)\to h_2^{-1}(0)$ is an isomorphism, the  varieties $\phi(W_{ij})$ give us the canonical stratification of $h_2^{-1}(0)$, and hence all $\phi(W_{ij})$ are also smooth and can be defined over $\overline{K}$.

We capture a key property of $\phi$ in the following definition. We say that a map $\varphi:h_1^{-1}(0)\to h_2^{-1}(0)$ {\em preserves multiplicities} if $|T_a|=|T_{\varphi(a)}|$ and we have an equality between the two tuples  $(N_i)_{i\in T_a}$ and $(N_i)_{i\in T_{\varphi(a)}}$ (with a suitable order on $T_a$ and $T_{\varphi(a)}$). In particular, if for each irreducible component $E$ of $(f\circ h_1)^{-1}(0)$ with $E\cap h_1^{-1}(0)\neq\emptyset$ and the unique divisor $F$ of $(g\circ h_2)^{-1}(0)$ such that $\Theta(E\cap \tilde{U})\subset F$, we have $\varphi(E\cap h_1^{-1}(0))\subset F\cap h_2^{-1}(0)$, then $\varphi$ preserves multiplicities.

\begin{lem}\label{overK} Using the notation above, there exists a $\overline{K}$-isomorphism $\varphi:h_1^{-1}(0)\to h_2^{-1}(0)$, such that $\varphi$ preserves multiplicities  and, for all $ij$ and all $a\in W_{ij}$, there exist a Zariski neighbourhood $W_a$ (resp. $V_{\varphi(a)}$) of $a$ in $Y_1$ (resp. of $\varphi(a)$ in $Y_2$) and functions $u_a, (z_i)_{i\in T_a}$ (resp. $v_{\varphi(a)}, (x_i)_{i\in T_{\varphi(a)}}$) as before, all defined over $\overline{K}$, and  a $\overline{K}$-regular function  $r_{W_{ija}}$ on  $W_{ija}:=W_{ij}\cap W_a\cap \varphi^{-1} (V_{\varphi(a)}\cap h_2^{-1}(0))$ such that
$$u_a|_{W_{ija}}=(v_{\varphi(a)}|_{\varphi(W_{ija})}\circ\varphi) \cdot r_{W_{ija}}^{d_a},$$
 where $d_a=\gcd\{N_i \mid i\in T_a\}=\gcd\{N_i \mid i\in T_{\varphi(a)}\}$.
\end{lem}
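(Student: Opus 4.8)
The statement of Lemma \ref{overK} is a "descent to $\overline{K}$" refinement of Lemma \ref{overC}: everything in Lemma \ref{overC} was produced over $\CC$ via GAGA and the biholomorphic map $\Theta$, and we now want the isomorphism to be definable over $\overline{K}$ while still preserving the numerical data and the $d_a$-th power relation between the units. The standard tool for such descent is a model-theoretic compactness/spreading-out argument over the algebraically closed field $\overline{K}$: one encodes the \emph{existence} of a $\CC$-isomorphism with all the required properties as a (countable, but in fact finite once one bounds degrees) first-order-expressible system of polynomial conditions over $\overline{K}$, and then uses that $\overline{K}$ is an algebraically closed subfield of $\CC$, so that this system has a solution over $\CC$ if and only if it has one over $\overline{K}$ (equivalently: $\overline{K} \prec \CC$ as fields, or just that a nonempty $\overline{K}$-variety has a $\overline{K}$-point).

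\textbf{First step: fix the combinatorial/stratified data.} I would start from the canonical stratifications $h_1^{-1}(0) = \bigsqcup_{ij} W_{ij}$ and $h_2^{-1}(0) = \bigsqcup_{ij} W_{ij}'$, which are defined over $\overline{K}$, together with the finite lists of Zariski charts $U_a$ (resp. $V_b$), the units $u_a$ (resp. $v_b$), the parameter systems $(z_i)_{i\in T_a}$ (resp. $(x_i)_{i\in T_b}$), and the multiplicities $(N_i)$ — all of which are already defined over $\overline{K}$ by the last sentence of the paragraph preceding Lemma \ref{overC}. Crucially, Lemma \ref{anaiso} tells us which divisor $F$ of $(g\circ h_2)^{-1}(0)$ each $E$ is sent to by $\Theta$; this pins down a \emph{fixed} matching of strata and of the tuples $(N_i)_{i\in T_a} \leftrightarrow (N_i)_{i\in T_{\phi(a)}}$, so that "preserves multiplicities" becomes a closed condition rather than something to be arranged.

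\textbf{Second step: write the descent problem as a variety over $\overline{K}$.} Consider the functor (or just the quasi-projective scheme) parametrizing tuples $(\varphi, (r_{W_{ija}})_{ija})$ where $\varphi$ is an isomorphism $h_1^{-1}(0)\to h_2^{-1}(0)$ respecting the fixed stratum-matching (hence automatically multiplicity-preserving), and where, on each $W_{ija}$, the regular function $r_{W_{ija}}$ satisfies $u_a|_{W_{ija}} = (v_{\varphi(a)}|_{\varphi(W_{ija})}\circ\varphi)\cdot r_{W_{ija}}^{d_a}$. An isomorphism between projective varieties, together with finitely many regular functions on affine charts satisfying finitely many polynomial identities, is cut out by finitely many polynomial equations in the coefficients; bounding the degrees (which one may do since the Hilbert polynomials of source and target are fixed) makes this an honest finite-type $\overline{K}$-scheme $\mathcal{M}$. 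By Lemma \ref{overC} and Lemma \ref{holo-regular}, $\mathcal{M}(\CC)\neq\emptyset$: the pair $(\phi, (r_W))$ constructed there is such a point. Since $\mathcal{M}$ is defined over $\overline{K}$ and $\overline{K}$ is algebraically closed, $\mathcal{M}(\CC)\neq\emptyset \Rightarrow \mathcal{M}(\overline{K})\neq\emptyset$ (a nonempty variety over an algebraically closed field has a rational point). Any $\overline{K}$-point of $\mathcal{M}$ furnishes the desired $\varphi$ and the $\overline{K}$-regular functions $r_{W_{ija}}$.

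\textbf{The main obstacle.} The delicate point is the bookkeeping that turns "there exists a $\CC$-isomorphism with the local power-of-unit property, chart by chart" into a genuinely \emph{finite} system of polynomial conditions definable over $\overline{K}$ — in particular, handling the fact that the charts $W_a$, the units, and the parameters $z_i,x_i$ are only defined locally, so the relation $u_a|_{W_{ija}} = (v_{\varphi(a)}\circ\varphi)\cdot r_{W_{ija}}^{d_a}$ must be imposed on a finite affine cover compatibly with $\varphi$, and one must check that the resulting scheme $\mathcal{M}$ does not depend on auxiliary choices in a way that breaks definability over $\overline{K}$. One also needs the stratum-matching dictated by $\Theta$ to be itself defined over $\overline{K}$, which follows because the strata $W_{ij}$, $W_{ij}'$ and the exceptional divisors $E$, $F$ are all $\overline{K}$-subvarieties and $\Theta$ identifies $E\cap\tilde U$ with an open subset of exactly one $F$, so the incidence "$\Theta(E)\subset F$" is recovered from the already-known $\CC$-isomorphism $\phi=\Theta|_{h_1^{-1}(0)}$ purely combinatorially. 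Once this encoding is set up carefully, the descent itself is immediate from algebraic closedness of $\overline{K}$; alternatively one can phrase the whole argument as: the first-order theory $\mathrm{ACF}_0$ is complete, the existence of such $(\varphi,(r_{W_{ija}}))$ is a sentence with parameters in $\overline{K}$, it holds in $\CC$ by Lemma \ref{overC}, hence it holds in $\overline{K}$.
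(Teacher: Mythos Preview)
Your proposal is correct and follows essentially the same route as the paper: encode the existence of an isomorphism $\varphi$ together with the local regular functions $r$ satisfying the $d$-th power relations as a first-order statement with parameters in $\overline{K}$ (equivalently, as a finite-type $\overline{K}$-scheme), observe that it has a $\CC$-point by Lemma~\ref{overC}, and descend via quantifier elimination for $\mathrm{ACF}_0$ (or the Nullstellensatz). The paper carries out exactly the bookkeeping you flag as the main obstacle, by fixing a finite affine cover indexed by $\ell,\ell'$ and a uniform degree bound $D$ on $\phi,\phi^{-1}$ and all $r_{W_{ij\ell\ell'}}$, so that the existential statement becomes genuinely first-order; your formulation via a parameter scheme $\mathcal{M}$ is an equivalent packaging of the same argument.
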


\begin{proof} We use the notation of Lemma \ref{overC}. We can fix finitely many points $a_1,\dots,a_{m}\in h_1^{-1}(0)(\overline{K})$ and $b_{1},\dots,b_{m'}\in h_2^{-1}(0)(\overline{K})$ with, for $ 1\leq \ell\leq m$ and $1\leq \ell'\leq m'$,  parameters
$$U_{a_{\ell}}, u_{a_{\ell}}, (z_{\ell j})_{j\in T_{a_{\ell}}} \quad\text{and}\quad  V_{b_{\ell'}},  v_{b_{\ell'}}, (x_{\ell'j})_{j\in T_{b_{\ell'}}}  ,$$
all defined over $\overline{K}$, such that $(U_{a_{\ell}})_{1\leq \ell\leq m}$ and $(V_{b_{\ell'}})_{1\leq \ell'\leq m'}$ are affine coverings  of $h_1^{-1}(0)$ and $h_2^{-1}(0)$, respectively,  and  for each $a\in h_1^{-1}(0)$ and $b\in h_2^{-1}(0)$, there exist $\ell$ with $a\in U_{a_{\ell}}$ and $T_a=T_{a_{\ell}}$, and $\ell'$ with $ b\in V_{b_{\ell'}}$ and $ T_b=T_{b_{\ell'}}$, respectively.
Then for $\ell,\ell'$ such that $(N_e)_{e\in T_{a_{\ell}}}=(N_e)_{e\in T_{b_{\ell'}}}$ and $ij$ such that $W_{ij\ell \ell'}:=W_{ij}\cap U_{a_{\ell}}\cap \phi^{-1}(V_{b_{\ell'}}\cap h_2^{-1}(0))\neq \emptyset$, there exists $r_{W_{ij\ell\ell'}}$ satisfying Lemma \ref{overC} with respect to $u_{a_{\ell}}, v_{b_{\ell'}}, \phi$ and $d=d_{a_{\ell}}=d_{b_{\ell'}}$. We may suppose that $\phi$, $\phi^{-1}$ and all $ r_{W_{ij\ell\ell'}}$ are  defined by polynomials of degree at most $D$.

 We remark that $a\mapsto T_a$ and $a\mapsto (N_i)_{i\in T_a}$ are $A$-definable in the language of rings, where
$$A=\{f, g, h_1, h_2, \big(U_{a_{\ell}},u_{a_{\ell}},(z_{\ell j})_{j\in T_{a_{\ell}}}\big)_{1\leq \ell\leq m}, \big(  V_{b_{\ell'}},v_{b_{\ell'}},(x_{\ell' j})_{j\in T_{b_{\ell'}}}\big)_{ 1\leq \ell'\leq m'}  \} .$$
Consequently, the following first-order formula  in the language of rings with parameters $A$ and the $W_{ij}$, defined over $\overline{K}$, will hold over $\CC$.
{\em There exists  an isomorphism $\varphi$ between $h_1^{-1}(0)$ and $h_2^{-1}(0)$ such that
\begin{itemize}
\item[(i)] $\varphi$ preserves multiplicities,
\item[(ii)] for each tuple $(i,j,\ell,\ell')$ such that $(N_e)_{e\in T_{a_{\ell}}}=(N_e)_{e\in T_{b_{\ell'}}}$ and $W_{\varphi,ij\ell\ell'}:=W_{ij}\cap (U_{a_{\ell}}\cap h_1^{-1}(0))\cap \varphi^{-1}(V_{b_{\ell'}}\cap h_2^{-1}(0))\neq \emptyset$, there exists a regular function $s_{W_{\varphi,ij\ell\ell'}}$ on $W_{\varphi,ij\ell\ell'}$ such that
$$u_{a_{\ell}}|_{W_{\varphi,ij\ell\ell'}}=(v_{b_{\ell'}}|_{\varphi(W_{\varphi,ij\ell\ell'})}\circ\varphi) \cdot s_{W_{\varphi,ij\ell\ell'}}^{d},$$
 where $d=\gcd\{N_i|i\in T_{a_{\ell}}\}=\gcd\{N_i|i\in T_{b_{\ell'}}\}$,
\item[(iii)] $\varphi,\varphi^{-1}$ and all $ s_{W_{\varphi,ij\ell\ell'}}$ are defined by polynomials of degree at most $D$.
\end{itemize}}
By quantifier elimination for the theory of algebraically closed fields of characteristic $0$ (or by an application of the Nullstellensatz), it also holds over $\overline{K}$, i.e., there exists a solution $(\varphi,s_{W_{\varphi,ij\ell\ell'}})$ defined over $\overline{K}$. Now for each pair $ij$ and each $a\in W_{ij}$, there exist $\ell,\ell'$ such that $a\in U_{a_{\ell}}, \varphi(a)\in V_{b_{\ell'}}$ and $T_a=T_{a_{\ell}}, T_{\varphi(a)}=T_{b_{\ell'}}$. Since $\varphi$ preserves multiplicities, we have that $(N_e)_{e\in T_{a_{\ell}}}=(N_e)_{e\in T_{b_{\ell'}}}$, and hence our claim follows by (ii) with the remark that $W_{\varphi,ij\ell\ell'}\neq \emptyset$.
\end{proof}

\begin{proof}[Proof of Main theorem \ref{mainthm1}]
We fix the following data: $h_1,h_2$ from Lemma \ref{anaiso}, and $\varphi$, $\varphi^{-1}$, a finite covering $\{U_a \mid a\in \cA\}$ of a Zariski neighbourhood of $h_1^{-1}(0)$ in $Y_1$ (where  $\cA\subset h_1^{-1}(0)$), a finite covering $\{V_{\varphi(a)}\mid a\in \cA\}$ of a Zariski neighbourhood of $h_2^{-1}(0)$ in $Y_2$, regular functions $(u_a, (z_{i})_{i\in T_a} , v_{\varphi(a)}, (x_{i})_{i\in T_{\varphi(a)}} )_{a\in\cA}$, and regular functions $r_{W_{ija}}$ for all $ij$ and $a$ as in Lemma \ref{overK}, such that everything is defined over $\overline{K}$. Moreover, by the proof of Lemma \ref{overK} we can assume that, for each $\tilde a\in h_1^{-1}(0)$, there exists $a\in\cA$ such that $\tilde a\in (U_{a}\cap h_1^{-1}(0))\cap \varphi^{-1}(V_{\varphi(a)}\cap h_2^{-1}(0))$ and $T_a=T_{\tilde a}$. Let $K'$ be the smallest number field over which these (finitely many) data can be defined. There exists a positive integer $M$ such that, for each local field $L\in\cL_{K',M}$, the data can be defined over $\cO_L$  and $h_1,h_2$ have tame good reduction modulo  $\cM_L$.  The theorem now follows from Lemmas \ref{zerofunction}, \ref{goodreduction}, \ref{anaiso}, \ref{overK} and the proof of Theorem 2.2 in \cite{Denef91}.
 \end{proof}

 \begin{proof}[Proof of Proposition \ref{conj2}]
 Given that $(f,P)$ and $(g,Q)$ are similar over $K$,  Theorem \ref{mainthm1} implies that there exists a finite extension $K'$ of $K$ such that $\moi_{K'}(f,P)=\moi_{K'}(g,Q)$. Assuming that Conjecture \ref{Conj1} holds, we obtain $\moi_K(f,P)=\moi_{K'}(f,P)=\moi_{K'}(g,Q)=\moi_K(g,Q)$.
\end{proof}

\section{Positive characteristic analogues and cohomological description}\label{section4}

In this section, we study analogous exponential sums over local fields of positive characteristic.
We need here the notion of jet schemes, associated to a scheme over a finite field. Since we will use this notion later also for schemes over $\CC$, we  introduce the relevant notation in general.

\subsection{Jet schemes}


Let $k_0$ be any field, $f\in k_0[x_1,\dots,x_n]$ a non-constant polynomial and $Z$ a reduced subscheme of $\AA_{k_0}^n$.

For a non-negative integer $r$, the $r$th jet scheme of a scheme $W$ (of finite type) over $k_0$ is the scheme $W_r$ over $k_0$, determined by the property $W_r(R)=\operatorname{Hom}_{k_0}(\Spec R[t]/(t^{r+1}),W_r)$ for any $k_0$-algebra $R$. Note that $W_0=W$.

We denote here $A=\AA_{k_0}^n$. Its $r$th jet scheme $A_r$ can be identified with the affine space $\AA_{k_0}^{n(r+1)}$.
For $r\geq r'$, we denote by $\pi_{rr'} : A_r\to A_r'$ the canonical projection induced by the homomorphism $R[t]/(t^{r+1})\to R[t]/(t^{r'+1})$.

The reduced $r$th jet scheme $X_{r,red}$ of $X=\Spec k_0[x_1,\dots,x_n]/(f)$ can be described as follows. We write
\begin{align*}
   & f(x_{01}+x_{11}t+\dots+x_{r1}t^r,\dots, x_{0n}+x_{1n}t+\dots+x_{rn}t^r) \\
 = & f_0+ f_1t+\dots+f_rt^{r} \mod t^{r+1},
\end{align*}
where $x^{(r)}=(x_{ij})_{0\leq i\leq r, 1\leq j\leq n}$ are coordinates on $A_r=\AA_{k_0}^{n(r+1)}$, given by the identification $A_r(k)=k^{(r+1)n} \cong (k[t]/(t^{r+1}))^n$ for any $k_0$-field $k$.
Hence, for all $\ell=0,\dots, r$, we have that $f_\ell$ is a polynomial in $x^{(\ell)}$. Then $X_{r,red}$ can be identified with the algebraic subset of $A_r$ given by the equations $f_0=\dots=f_r=0$.


We consider various lifts of $Z\subset A=\AA_{k_0}^n$ to $A_r=\AA_{k_0}^{n(r+1)}$. First, we denote $Z^{(r)}=\pi_{r0}^{-1}(Z)$. For any $k_0$-field $k$, we can identify $Z^{(r)}(k)$  with $(Z(k)+ tk[[t]]^n)/(t^{r+1}k[[t]]^n)$.
Further, for each $1\leq i \leq  r$, we denote by $Z_{r,i}$ the algebraic subset of $Z^{(r)}$ given by the conditions  $f_1=\dots=f_i=0$, and we put $Z_{r,0}=Z^{(r)}$.  Hence
$$Z_{r,r}\subset Z_{r,r-1} \subset Z_{r,r-2} \subset \dots \subset Z_{r,2} \subset Z_{r,1} \subset Z_{r,0}=Z^{(r)}.$$

We will write $Z_f^{(r)}$ and $Z_{f,r,i}$ instead of $Z^{(r)}$ and $Z_{r,i}$ if we want to emphasize $f$.

\subsection{Exponential sums in positive characteristic}

Let $p$ be a prime number and $\FF_q$ a finite field of characteristic $p$. Let
$$\Psi_p: \FF_p\to\CC^{*} : (x \mod p)\mapsto \exp\left(\frac{2\pi ix}{p}\right)$$
denote the standard additive character of $\FF_p$.  For any finite extension $k$ of $\FF_p$, we consider the standard additive character
$$\psi:k((t))\to \CC^{*} : \sum_{i\geq N}b_it^i \mapsto \Psi_p(\Tr_{k/\FF_p}(b_{-1})) ,$$
where $\Tr_{k/\FF_p}:k\to\FF_p$ is the trace map. If $z=\sum_{i\geq -m}a_it^i\in k((t))$ with $a_{-m}\neq 0$, we consider the additive character
$$\psi_z:k((t))\to\CC^* : x \mapsto \psi(xz).$$
Then $\psi_z(\sum_{i\geq N}b_it^i)= \Psi_p(\Tr_{k/\FF_p}(\sum_{i}b_ia_{-1-i}))$. In particular, if $x=\sum_{i\geq 0}b_it^i\in k[[t]]$, then
$$\psi_z(x)=\Psi_p\bigl(\Tr_{k/\FF_p}(\sum_{0\leq i\leq m-1}b_ia_{-1-i})\bigr).$$

\medskip
\begin{defn}\label{finitexp}
Let $f\in \FF_q[x_1,\dots,x_n]$ be a non-constant polynomial and $Z$ a reduced subscheme of $\AA_{\FF_q}^n$.
Fix a positive integer $m>1$, a finite extension $k$ of $\FF_q$, and $z=\sum_{i\geq -m}a_it^i\in k((t))$ with $a_{-m}\neq 0$. Using the notation above, the exponential sum associated to $f$, $z$ and $Z$ is
\begin{align*}
E_{f,k((t)),Z,\psi_z}:=&\int_{Z(k)+tk[[t]]^n}\psi_z(f(x))|dx| \\
=&\#(k)^{-mn}\sum_{Z^{(m-1)}(k)}\Psi_p \Bigl( \Tr_{k/\FF_p}\bigl(\sum_{i=0}^{m-1}a_{-i-1}f_{i}(x^{(i)})\bigr)\Bigr).
\end{align*}
\end{defn}
It turns out that, when $p$ is big enough with respect to $m$, the expression in Definition \ref{finitexp} can be simplified a lot. More precisely, by reparameterizing the jets, we can arrange that only $f_0$ and $f_{m-1}$ occur in the last sum. The following lemma contains the main argument.

\begin{lem}\label{changechar}Let $k$ be a finite extension of $\FF_p$. Suppose that $p>m>2$ and let $z=\sum_{i\geq -m}a_it^i\in k((t))$ with $a_{-m}\neq 0$. Then there exists $t_1\in k((t))$ with $\ord_t(t_1)=1$ and $\psi_z(at_1^{\ell})=1$ for all $1\leq \ell\leq m-2$ and $a\in k$.
\end{lem}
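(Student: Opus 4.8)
We are given $z = \sum_{i \geq -m} a_i t^i \in k((t))$ with $a_{-m} \neq 0$ and $p > m > 2$, and we want to find $t_1 \in k((t))$ with $\ord_t(t_1) = 1$ such that $\psi_z(a t_1^\ell) = 1$ for all $a \in k$ and all $1 \leq \ell \leq m - 2$. The idea is to build $t_1$ as a "reparameterization of the uniformizer $t$", i.e., $t_1 = t \cdot w(t)$ for a suitable unit $w \in k[[t]]^\times$, and to choose $w$ so that, for $1 \leq \ell \leq m-2$, the power $t_1^\ell = t^\ell w(t)^\ell$ has the property that the coefficient of $t^{-1}$ in $z \cdot a t_1^\ell$ vanishes for every $a \in k$ — equivalently, that the coefficient of $t^{-1}$ in $z t_1^\ell$ is $0$ (since $\ac$... rather: since $\Tr_{k/\FF_p}$ applied to $a \cdot (\text{coeff})$ is $0$ for all $a$ iff the coefficient itself is $0$). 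Recalling the formula $\psi_z(x) = \Psi_p(\Tr_{k/\FF_p}(\sum_i b_i a_{-1-i}))$ for $x = \sum b_i t^i$, the condition $\psi_z(a t_1^\ell) = 1$ for all $a \in k$ is exactly the vanishing of the $t^{-1}$-coefficient of the Laurent series $z t_1^\ell$.

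First I would set up the combinatorics. Write $t_1 = t(1 + c_1 t + c_2 t^2 + \dots)$ with unknowns $c_1, \dots, c_{m-2} \in k$ (higher coefficients are irrelevant, since $\ord_t(z) = -m$ and we only care about powers $t_1^\ell$ with $\ell \leq m-2$, so only the jet of $t_1$ up to order $\ell - m$... precisely, the $t^{-1}$-coefficient of $z t^\ell w^\ell$ depends only on $c_1, \dots, c_{m-1}$, and in fact by a degree count on $m - 1 - \ell$ only on $c_1, \dots, c_{m-2}$). For each $\ell$ in the range, the vanishing of the $t^{-1}$-coefficient of $z t_1^\ell$ is one polynomial equation in $c_1, \dots, c_{m-2}$; I would show this system can be solved \emph{triangularly}: the equation for $\ell = m-2$ is (essentially) linear in $c_1$ with leading coefficient a nonzero multiple of $a_{-m}$, allowing one to solve for $c_1$; then the equation for $\ell = m - 3$ becomes linear in $c_2$ after substituting, and so on, down to $\ell = 1$ which determines $c_{m-2}$. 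This is where the hypothesis $p > m$ is used: the relevant coefficients that must be invertible are products of $a_{-m}$ with small integers (binomial-type coefficients bounded by $m$), all of which are nonzero in $k$ since $\charac k = p > m$.

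The main obstacle I anticipate is making the triangular structure precise and verifying that the "pivot" coefficients are genuinely nonzero — one must track exactly which monomial in the $c_j$'s appears with the lowest total weight in the $t^{-1}$-coefficient of $z t_1^\ell$, and check its coefficient is a nonzero integer multiple of $a_{-m}$ (not of some later $a_i$, which could vanish). A clean way to organize this: since $t \mapsto t_1$ is an automorphism of $k[[t]]$ fixing $t^{-1}z$'s pole order, one can instead think of choosing the \emph{inverse} substitution and matching Laurent coefficients; alternatively, argue by induction on $m$, peeling off one coefficient $c_j$ at each step. I would present the induction: assuming the statement for smaller values, first choose $c_1$ to kill the $\ell = m-2$ condition (a single linear equation solvable because $p \nmid (m-1)$ or similar, with coefficient $\propto a_{-m} \neq 0$), replace $t$ by $t(1 + c_1 t)$, observe this shifts $z$ to a new series still with a pole of order $m$ but now satisfying one more of the required vanishings, and recurse on the remaining $c_j$'s for the range $\ell \leq m - 3$. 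The bookkeeping that the newly-required conditions after substitution still form a solvable triangular system — and in particular that no needed denominator is divisible by $p$, which is guaranteed by $p > m$ — is the technical heart, but it is elementary once the weight filtration on the monomials in $c_1, \dots, c_{m-2}$ is written down.
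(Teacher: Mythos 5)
Your proposal is correct and follows essentially the same route as the paper's proof: expand $t_1 = t(1 + c_1 t + \cdots + c_{m-2} t^{m-2})$ (the paper writes $\alpha_{j+1}$ for your $c_j$), observe that the condition $\psi_z(a t_1^\ell) = 1$ for all $a \in k$ is the vanishing of the $t^{-1}$-coefficient of $z t_1^\ell$, and solve the resulting system triangularly from $\ell = m-2$ down to $\ell = 1$, using $a_{-m} \neq 0$ and $p > m$ to invert the pivot $\ell a_{-m}$. The paper's equation (4.1) makes this triangular structure explicit, matching your "solve for $c_1$, then $c_2$, \dots" step-by-step.
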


\begin{proof}
We will find $t_1$ of the form $t_1=t+\alpha_2t^2+\dots+\alpha_{m-1}t^{m-1}$ (with the $\alpha_i \in k$). We remark that
\begin{align*}
t_1^{\ell}=&t^{\ell}(1+\alpha_2t+\dots+\alpha_{m-1}t^{m-2})^{\ell}\\
=&t^{\ell}\Bigl(1+u_{1,\ell}(\alpha_2)t+u_{2,\ell}(\alpha_2,\alpha_3)t^2+\dots+u_{m-\ell-2,\ell}(\alpha_2,\dots,\alpha_{m-\ell-1})t^{m-\ell-2}\\
&+(u_{m-\ell-1,\ell}(\alpha_2,\dots,\alpha_{m-\ell-1})+\ell\alpha_{m-\ell})t^{m-\ell-1}+u_\ell t^{m-\ell}\Bigr),
\end{align*}
for all $1\leq \ell\leq m-2$, where $u_{1,\ell},\dots,u_{m-\ell-1,\ell}$ are polynomials and $u_{\ell}\in k[[t]]$. By definition of $\psi_z$, we need to find $\alpha_2,\dots,\alpha_{m-1}\in k$ such that
\begin{equation}\label{formula changechar}
a_{-1-\ell}+a_{-2-\ell}u_{1,\ell}(\alpha_2)+\dots+a_{-m}(u_{m-\ell-1,\ell}(\alpha_2,\dots,\alpha_{m-\ell-1})+\ell\alpha_{m-\ell})=0
\end{equation}
for all $1\leq \ell\leq m-2$. Since $a_{-m}\neq 0$ and  $\ell<m<p$, we can find $\alpha_2,\dots,\alpha_{m-1}$ by induction as follows:
the cases $\ell=m-2,m-3,\dots,1$ of equation (\ref{formula changechar}) determine $\alpha_2$, then $\alpha_3$ in terms of $\alpha_2$, \dots, and finally $\alpha_{m-1}$ in terms of all previous $\alpha_i$, respectively.
\end{proof}

\begin{prop}\label{simplified expsum}
Let $f\in \FF_q[x_1,\dots,x_n]$ be a non-constant polynomial and $Z$ a reduced subscheme of $\AA_{\FF_q}^n$.   Let $m\in \ZZ_{\geq 2}$ and suppose that $p>m$ if $m>2$. Take   a finite extension $k$ of $\FF_q$ and $z\in k((t))$ with $\ord_t(z)=-m$. Then there exists $b_0\in k$ and $b_{m-1}\in k^{*}$ such that
\begin{align*}\label{reduceformu}
E_{f,k((t)),Z,\psi_z}=&\int_{Z(k)+tk[[t]]^n}\psi_z(f(x))|dx|\\
=&\#(k)^{-mn}\sum_{Z^{(m-1)}(k)}\Psi_p\bigl(\Tr_{k/\FF_p}(b_0f_0(x^{(0)})+b_{m-1}f_{m-1}(x^{(m-1)}))\bigr).
\end{align*}
\end{prop}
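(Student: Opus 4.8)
The plan is to reduce the full exponential sum in Definition \ref{finitexp} to one involving only $f_0$ and $f_{m-1}$ by a change of the jet parameter $t$, using Lemma \ref{changechar}. The key observation is that the exponential sum $E_{f,k((t)),Z,\psi_z}$ is an integral over the set $Z(k)+tk[[t]]^n$ against the additive character $\psi_z$, and that this integral is invariant under any measure-preserving automorphism of $k[[t]]$ that fixes the residue field and hence preserves both $Z(k)+tk[[t]]^n$ and the Haar measure $|dx|$. Substituting a new uniformizer $t_1$ with $\ord_t(t_1)=1$ induces precisely such an automorphism, so the value of the sum is unchanged; what changes is how $f$, expanded in powers of $t_1$, contributes to the character.

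First I would dispose of the trivial case $m=2$: here Definition \ref{finitexp} already only involves $f_0=f_0(x^{(0)})$ and $f_1=f_{m-1}(x^{(m-1)})$, and the coefficient $a_{-m}=a_{-2}$ is nonzero, so we may take $b_0=a_{-1}$ and $b_{m-1}=a_{-2}\in k^*$ directly. So assume $m>2$ and hence $p>m$. Apply Lemma \ref{changechar} to obtain $t_1\in k((t))$ with $\ord_t(t_1)=1$ and $\psi_z(at_1^\ell)=1$ for all $1\le\ell\le m-2$ and all $a\in k$. The substitution $x\mapsto x$ expressed in the uniformizer $t_1$, i.e.\ rewriting $x=x^{(0)}+x^{(1)}t_1+\dots+x^{(m-1)}t_1^{m-1} \pmod{t_1^{m}}$, gives a bijection of $Z(k)+tk[[t]]^n$ with itself (since $t_1 k[[t]]=tk[[t]]$ and the reduction mod the maximal ideal is unchanged) which preserves $|dx|$; this bijection is polynomial on jet coordinates with invertible Jacobian of determinant a unit, hence measure-preserving. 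Therefore
\[
E_{f,k((t)),Z,\psi_z}=\#(k)^{-mn}\sum_{Z^{(m-1)}(k)}\Psi_p\!\left(\Tr_{k/\FF_p}\!\left(\sum_{i=0}^{m-1}a_{-i-1}\,\tilde f_i\right)\right),
\]
where now $f(x)=\tilde f_0+\tilde f_1 t_1+\dots+\tilde f_{m-1}t_1^{m-1}\pmod{t_1^m}$ with $\tilde f_i$ a polynomial in the jet coordinates. Crucially $\tilde f_0=f_0(x^{(0)})$ (the residue-level term is intrinsic), and $\tilde f_{m-1}$ differs from $f_{m-1}(x^{(m-1)})$ only in lower-order jet variables, but its leading part in $x^{(m-1)}$ is the linear form coming from the gradient of $f$ at $x^{(0)}$ — in particular $b_{m-1}:=a_{-m}$ is the coefficient and it lies in $k^*$.

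The heart of the argument is then that, after the substitution, the character value of the \emph{intermediate} terms $\tilde f_1,\dots,\tilde f_{m-2}$ collapses. This is where Lemma \ref{changechar} does its work: expanding $f(x)$ in powers of $t_1$ and collecting, each monomial of $f$ evaluated on the jet produces terms of the form $(\text{element of }k)\cdot t_1^\ell$ for $\ell\le m-1$, and the combination $\sum_{i=0}^{m-1}a_{-i-1}\tilde f_i t_1^i$ — which is the coefficient of $t_1^{-1}$, hence of $t^{-1}$, in $z f(x)$ — has its $t^\ell$-contributions for $1\le\ell\le m-2$ killed by the defining property $\psi_z(at_1^\ell)=1$. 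More precisely, one checks that $\psi_z(f(x))=\psi_z(\tilde f_0)\cdot\psi_z(\tilde f_{m-1}t_1^{m-1})$ because the middle terms $\psi_z(\tilde f_i t_1^i)$ are all $1$; writing $\psi_z$ of a product via $\Psi_p\circ\Tr_{k/\FF_p}$ and identifying the residues, this yields constants $b_0,b_{m-1}\in k$ with $b_{m-1}\ne 0$ such that $\psi_z(f(x))=\Psi_p(\Tr_{k/\FF_p}(b_0 f_0(x^{(0)})+b_{m-1}f_{m-1}(x^{(m-1)})))$ for every $x$, after a final harmless reparameterization of the jet coordinates $x^{(1)},\dots,x^{(m-1)}$ absorbing the lower-order corrections in $\tilde f_{m-1}$ into a unipotent (hence measure-preserving, bijective) change of variables on $Z^{(m-1)}(k)$. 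Summing over $Z^{(m-1)}(k)$ gives the claimed identity. The main obstacle is the bookkeeping in this last step: one must verify that the nonlinear, lower-jet-order corrections appearing in $\tilde f_{m-1}$ (and the analogous corrections hidden in the claim that the $\tilde f_i$, $1\le i\le m-2$, contribute trivially) can genuinely be removed by a measure-preserving bijection of the jet space, rather than merely reorganized; this uses that $p>m$ so that the relevant linear systems over $k$ (the inductive determination of the $\alpha_j$ in Lemma \ref{changechar}, and the triangular structure of the jet reparameterization) are solvable.
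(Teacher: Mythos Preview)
Your approach is the paper's --- both pass to a new uniformizer $t_1$ via Lemma \ref{changechar} --- but you miss the one observation that makes the argument clean, and this leads you to introduce complications (and one incorrect intermediate formula) that are not needed.

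The point you overlook is that, since $f$ has coefficients in $\FF_q$ and not in $k((t))$, the jet polynomials $f_0,\dots,f_{m-1}$ are \emph{literally the same} whether one expands in $t$ or in $t_1$. Concretely, if one writes a jet as $x_j = x_{0j} + x_{1j}t_1 + \dots + x_{(m-1)j}t_1^{m-1}$ in $t_1$-coordinates, then $f(x)\equiv f_0(x^{(0)})+f_1(x^{(1)})t_1+\dots+f_{m-1}(x^{(m-1)})t_1^{m-1}\pmod{t_1^m}$ with the identical polynomials $f_i$. So your $\tilde f_i$ are exactly $f_i$ in the new coordinates; there are no ``lower-order corrections'' in $\tilde f_{m-1}$ to absorb and no ``final reparameterization'' is required. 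The paper then simply rewrites $\psi_z$ on $k((t))=k((t_1))$ as $\tilde\psi_{z'}$ for the standard character $\tilde\psi$ of $k((t_1))$; the defining property of $t_1$ forces $a'_{-2}=\dots=a'_{-m+1}=0$ in $z'=\sum a'_i t_1^i$, and applying Definition \ref{finitexp} verbatim with $t_1$ in place of $t$ gives the result with $b_0=a'_{-1}$, $b_{m-1}=a'_{-m}\in k^*$.

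Note also that your displayed formula $\sum_{i=0}^{m-1}a_{-i-1}\tilde f_i$ is not correct as written: the $a_{-i-1}$ are the $t$-coefficients of $z$ while the $\tilde f_i$ are the $t_1$-coefficients of $f(x)$, so this pairing does not compute the residue of $zf(x)$. Your subsequent direct calculation $\psi_z(f(x))=\psi_z(\tilde f_0)\cdot\psi_z(\tilde f_{m-1}t_1^{m-1})$ is correct and does salvage the argument, but the detour through a wrong intermediate expression and a redundant unipotent change of variables obscures what becomes a two-line computation once one notices that the $f_i$ are unchanged.
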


\begin{proof}
If $m=2$, there is nothing to prove. We thus assume that $p>m>2$. We take $t_1$ as in Lemma \ref{changechar} with respect to $z$. We consider now the standard additive character $\tilde{\psi}:k((t_1))\to \CC^*$  of $k((t_1))$ as above.
Since $k((t))=k((t_1))$, there exists $z'$ with $\ord_{t_1}(z')=-m$ and $\psi_z=\tilde{\psi}_{z'}$, where $\tilde{\psi}_{z'}(x)=\tilde{\psi}(z'x)$. Writing $z'=\sum_{i\geq -m}a_i't_1^i$,  Lemma \ref{changechar} implies that we must have $a'_i=0$ for all $-m+1\leq i\leq -2$.

Since $f\in\FF_q[x_1,\dots,x_n]$, we can write
\begin{align*}
   & f(x_{01}+x_{11}t_1+\dots+x_{m1}t_1^m,\dots, x_{0n}+x_{1n}t_1+\dots+x_{mn}t_1^m) \\
 = & f_0+ f_1t_1+\dots+f_mt_1^{m} \mod t_1^{m+1},
\end{align*}
 for all $m\geq 0$, where $f_0, f_1, \dots,f_m$ are the same polynomials as before. Hence we have
\begin{align*}
&\int_{Z(k)+tk[[t]]^n}\psi_z(f(x))|dx|\\
=&\int_{Z(k)+t_1k[[t_1]]^n}\tilde{\psi}_{z'}(f(x))|dx|\\
=&\#(k)^{-mn}\sum_{Z^{(m-1)}(k)}\Psi_p \bigl( \Tr_{k/\FF_p}(a'_{-1}f_{0}(x^{(0)})+  a'_{-m}f_{m-1}(x^{(m-1)}))\bigr).
\end{align*}
\end{proof}

\subsection{Cohomological description of exponential sums}

Using the simplification above, we can now use Deligne's theory on exponential sums over finite fields to study the exponential sums $E_{f,k((t)),Z,\psi_z}$. This requires the notion of Artin-Schreier sheaves, which we first recall.

\smallskip
Let $\FF_q$ be a finite field of characteristic $p>0$. We consider the Artin-Schreier covering $\cX_0$ of $\AA_{\FF_q}^1$, given by the equation $T^p-T=x$ in the affine plane over $\FF_q$. Hence $\cX_0=\Spec \FF_q[x,T]/(T^p-T-x)$ with Galois group  $\FF_p$ (corresponding to the maps $T\mapsto T+a$ for $a\in \FF_p$).

We fix a prime number  $\ell\neq p$ and a non-trivial additive character $\Psi:\FF_p\to\overline{\QQ}_{\ell}^*$ of $\FF_p$, induced by $\Psi_p$ and an isomorphism $\iota$ between $\overline{\QQ}_\ell$ and $\CC$. We consider the $\ell$-adic local system $\cL_{\Psi}$ on $\AA^1_{\FF_q}$, given by pushing out the Galois covering by the character $\Psi^{-1}:\FF_p\to \overline{\QQ}_{\ell}^*$.

\begin{notation}\label{notation for finite fields}
Let $f\in \FF_q[x_1,\dots,x_n]$ be a non-constant polynomial, $Z$ a reduced subscheme of $\AA_{\FF_q}^n$, and $m\in \ZZ_{\geq 2}$. To a finite extension $k$ of $\FF_q$ and  $(b_0,b_{m-1})\in k\times k^*$,
we associate the polynomial $b_0f_0 + b_{m-1}f_{m-1}$ as in Proposition \ref{simplified expsum}, considered as
$k$-morphism $b_0f_0 + b_{m-1}f_{m-1}:Z_k^{(m-1)}\to \AA_{k}^1$.
\end{notation}

\begin{defn} With Notation \ref{notation for finite fields}, the {\em Artin-Schreier sheaf} $\cL_{f,b,\Psi}$ associated with $b_0f_0 + b_{m-1}f_{m-1}$ and $\Psi$ is the $\ell$-adic local system $\cL_{f,b,\Psi}:=(b_0f_0 + b_{m-1}f_{m-1})^{*}(\cL_{\Psi})$ on $Z_k^{(m-1)}$.
 \end{defn}

If $k=\FF_{q^r}$ and $x^{(m-1)}\in Z_k^{(m-1)}(k)$, then the trace of the geometric Frobenius element $\Frob_{q}^r$ in $\Gal(\overline{\FF}_p,k)$, acting on the fiber of $\cL_{f,b,\Psi}$ at a geometric point $\overline{x}$ over $x^{(m-1)}$, is $\Psi \bigl(\Tr_{k/\FF_p}((b_0f_0 + b_{m-1}f_{m-1})(x^{(m-1)}))\bigr)$. As usual, if $p>m$, then Proposition \ref{simplified expsum} and  the Grothendieck trace formula yield
\begin{align*}
E_{f,k((t)),Z,\psi_z}=&\iota\Bigl(\sum_{x^{(m-1)}\in Z^{(m-1)}(k)}\Psi\bigl(\Tr_{k/\FF_p}((b_0f_0 + b_{m-1}f_{m-1})(x^{(m-1)})) \bigr)\Bigr) \\
=&\iota\Bigl(\sum_{i=0}^{2(\dim(Z)+n(m-1))}(-1)^i\Tr\bigl(\Frob_{q}^r, \H_c^i(Z^{(m-1)}\otimes_{\FF_q}\overline{\FF}_p, \cL_{f,b,\Psi})\bigr)\Bigr).
\end{align*}
Note here that $\dim(Z)+ n(m-1) = \dim(Z^{(m-1)})$.

\medskip
We recall that an element $\alpha\in\overline{\QQ}_{\ell}$ is said to be {\em pure of weight $j\in\ZZ$ related to $\FF_q$}, if $|\iota(\alpha)|=q^{j/2}$ for every embedding $\iota$ of the subfield $\QQ(\alpha)\subset \overline{\QQ}_{\ell}$ into $\CC$. Deligne showed in \cite{DeligneWeil2} that every eigenvalue of $\Frob_q$ on the finite dimensional $\overline{\QQ}_{\ell}$-vector space $\H_c^i(Z^{(m-1)}\otimes_{\FF_q}\overline{\FF}_p, \cL_{f,b,\Psi})$ is pure of some weight $j\in [0,i]$ related to $\FF_q$.

\begin{defn}Let as above $f$ be a polynomial in $\FF_q[x_1,\dots,x_n]$, $Z$  a reduced subscheme of $\AA_{\FF_q}^n$ and $k$ a finite extension of $\FF_q$.
The {\em highest weight of $f$ on $Z$} related to $m\in \ZZ_{\geq 2}$ and $b=(b_0,b_{m-1})\in k\times k^*$, denoted by $w(f,b,Z^{(m-1)})=w(b_0f_0 + b_{m-1}f_{m-1},Z_k^{(m-1)})$ is the highest weight among  the weights related to $\FF_q$ of all eigenvalues of $\Frob_q$ on all cohomology groups $\H_c^i(Z^{(m-1)}\otimes_{\FF_q}\overline{\FF}_p, \cL_{f,b,\Psi})$, with the convention that $w(f,b,Z^{(m-1)})=-\infty$ if all those cohomology groups vanish.
\end{defn}

It is clear that, if $p>m$ and we know the value of $w(f,b,Z^{(m-1)})$, then we know an upper bound for the exponential sums $E_{f,k((t)),Z,\psi_z}$. However, for an arbitrary  variety $\cY$ over $\FF_q$ and an arbitrary regular function $g:\cY\to \AA_{\FF_q}^1$, we do not know yet that such a weight $w(g,\cY)$ gives us the best exponent in the estimate of the exponential sums over $\FF_q$ associated with  ($g,\cY$). And even if $w(g,\cY)$ gives us the best exponent over $\FF_q$, we  do not know what happens if we consider a finite extension of $\FF_q$.
In a special case, Bombieri and Katz answered some questions about this problem by using the subspace theorem (see \cite{Bombieri-Katz}), namely, when the exponential sum is {\em pure}, i.e., $\H^i_c(\cY\otimes_{\FF_q}\overline{\FF}_p, \cL_{g,\Psi})=0$ if $i\neq\dim(\cY)$ and all the eigenvalues of $\Frob_q$ on $\H^{\dim(\cY)}_c(\cY\otimes_{\FF_q}\overline{\FF}_p, \cL_{g,\Psi})$ are of weight $\dim(\cY)$.

\subsection{Vanishing of cohomology}

Our exponential sum
 is rarely pure, so we need to do more if we want to understand the behaviour of the exponential sum under extension  of the base field.
Firstly, we can easily show that $\H_c^i(Z^{(m-1)}\otimes_{\FF_q}\overline{\FF}_p, \cL_{f,b,\Psi})$ vanishes  for small enough $i$.

\begin{prop}Suppose that there is a stratification of $Z$ with all strata affine and smooth over $\FF_q$. Then
$$\H_c^i(Z^{(m-1)}\otimes_{\FF_q}\overline{\FF}_p, \cL_{f,b,\Psi})=0  \qquad\text{for all } i< n(m-1).                   $$
More precisely, let $s$ be the smallest occurring dimension of these strata. Then
$$\H_c^i(Z^{(m-1)}\otimes_{\FF_q}\overline{\FF}_p, \cL_{f,b,\Psi})=0  \qquad\text{for all } i<s + n(m-1).                                                 $$
In particular, if $Z$ is affine and smooth, then
$$\H_c^i(Z^{(m-1)}\otimes_{\FF_q}\overline{\FF}_p, \cL_{f,b,\Psi})=0     \qquad\text{for all } i< \dim(Z) + n(m-1).                                          $$
\end{prop}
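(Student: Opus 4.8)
The plan is to reduce the vanishing of $\H^i_c$ to a general statement about affine fibrations and Artin-Schreier sheaves, exploiting the fact that the jet projection $\pi_{(m-1)0}: Z^{(m-1)} \to Z$ is a Zariski-locally trivial fibration with fibers affine spaces $\AA^{n(m-1)}$. First I would recall the standard Artin-Schreier vanishing: for any affine smooth variety $W$ over $\overline{\FF}_p$ of dimension $d$ and any morphism $g: W \to \AA^1$, one has $\H^i_c(W, \cL_{g,\Psi}) = 0$ for $i < d$. This follows from Poincar\'e duality together with the affine Lefschetz theorem applied to the (smooth) sheaf $g^*\cL_\Psi$: indeed $\H^i_c(W, \cF) \cong \H^{2d-i}(W, \cF^\vee)^\vee$ for $\cF$ a lisse sheaf on smooth $W$, and Artin vanishing gives $\H^j(W, \cF^\vee) = 0$ for $j > d$ since $W$ is affine of dimension $d$.

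Next I would treat the case where $Z$ itself is affine and smooth of dimension $\dim(Z)$. Then $Z^{(m-1)} = \pi_{(m-1)0}^{-1}(Z)$ is, over $\overline{\FF}_p$, an affine space bundle over $Z$, hence itself affine and smooth of dimension $\dim(Z) + n(m-1)$; the quoted general vanishing immediately gives $\H^i_c(Z^{(m-1)} \otimes \overline{\FF}_p, \cL_{f,b,\Psi}) = 0$ for $i < \dim(Z) + n(m-1)$, which is the last assertion of the proposition. For the general case, I would take the assumed stratification $Z = \bigsqcup_\alpha S_\alpha$ into affine smooth strata and pull it back to a stratification $Z^{(m-1)} = \bigsqcup_\alpha S_\alpha^{(m-1)}$, where each $S_\alpha^{(m-1)} = \pi_{(m-1)0}^{-1}(S_\alpha)$ is again affine smooth of dimension $\dim(S_\alpha) + n(m-1) \geq s + n(m-1)$. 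The long exact sequence of cohomology with compact support for an open-closed decomposition, used inductively along the stratification (ordering strata by the closure relation), then reduces the vanishing of $\H^i_c$ of the whole to the vanishing of $\H^i_c(S_\alpha^{(m-1)} \otimes \overline{\FF}_p, \cL_{f,b,\Psi}|_{S_\alpha^{(m-1)}})$ for each $\alpha$; by the affine-smooth case each of these vanishes for $i < \dim(S_\alpha) + n(m-1)$, hence for $i < s + n(m-1)$, giving both the second ("more precise") and first assertions at once.

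The main technical point — not really an obstacle but the step requiring care — is the bookkeeping in the stratification induction: one must choose an enumeration of strata refining the partial order given by $S_\alpha \subset \overline{S_\beta} \implies \alpha \succ \beta$, so that at each step one has an open stratum complementary to a closed union of lower strata, and then chase the triangle $\H^\bullet_c(\text{open}) \to \H^\bullet_c(\text{total}) \to \H^\bullet_c(\text{closed})$. Since the restriction of $\cL_{f,b,\Psi}$ to each $S_\alpha^{(m-1)}$ is still the Artin-Schreier sheaf attached to the restricted morphism, and each such stratum is affine smooth of the stated dimension, the induction goes through with no new input beyond affine Lefschetz and Poincar\'e duality. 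I would also note in passing that the hypothesis of the proposition is automatically satisfied when $Z$ is reduced over a perfect field of characteristic zero or when $Z$ admits a resolution with good reduction, which is the setting relevant for the later sections; but strictly for this proof only the abstract existence of such a stratification is used.
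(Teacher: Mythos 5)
Your proof is correct and takes essentially the same route as the paper's: affine Lefschetz/Artin vanishing combined with Poincar\'e duality gives the vanishing on each stratum's jet pullback (which is affine smooth of dimension $\dim(S_\alpha) + n(m-1)$), and the passage to $Z^{(m-1)}$ is handled by the stratification; the paper invokes the spectral sequence from \cite[Sommes trig.\ 2.5*]{Deligneetalecoho} where you spell out the equivalent inductive long-exact-sequence argument, but these are the same tool.
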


\begin{proof}
If $U$ is a stratum of dimension $s'\geq s$, then $U\times\AA_{\FF_q}^{n(m-1)}$ is smooth and affine of dimension $s'+n(m-1)$. So, by Poincar\'e duality, we obtain that
$$\H_c^i(U\otimes_{\FF_q}\overline{\FF}_p, \cL_{f,b,\Psi})=0$$
for all $0\leq i< s'+n(m-1)$. The claim follows from the spectral sequence in \cite[Sommes trig. 2.5*]{Deligneetalecoho}.
\end{proof}

Now we show a fibration property for $b_0f_0+b_{m-1}f_{m-1}$ that helps us to compute $\H_c^i(Z^{(m-1)}\otimes_{\FF_q}\overline{\FF}_p, \cL_{f,b,\Psi})$ for large enough $i$.

\begin{lem}\label{fibration} We use Notation \ref{notation for finite fields}. Suppose that $p>m >i_0+2 \geq 2$. Then, for $(b_0,b_{m-1})\in k\times k^*$, the restriction of $b_0f_0+b_{m-1} f_{m-1}$ to $Z_{k,m-1,i_0}\setminus Z_{k,m-1,i_0+1}$ is a  trivial fibration, i.e., there exists a scheme $V_{0}$ and an $k$-isomorphism $\theta:Z_{k,m-1,i_0}\setminus Z_{k,m-1,i_0+1} \to V_{0}\times\AA_{k}^1$ such that $\pi\circ\theta=(b_0f_0+b_{m-1}f_{m-1})|_{Z_{k,m-1,i_0}\setminus Z_{k,m-1,i_0+1}}$, where $\pi$ is the projection from $V_{0}\times\AA_{k}^1$ to $\AA_{k}^1$.
\end{lem}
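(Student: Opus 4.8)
The plan is to trivialise the morphism $g:=b_0f_0+b_{m-1}f_{m-1}$ on $W:=Z_{k,m-1,i_0}\setminus Z_{k,m-1,i_0+1}$ by means of a reparametrisation of the jet variable $t$, in the spirit of Lemma~\ref{changechar}. Write $r_0:=m-1-i_0$, so that the hypothesis $m>i_0+2$ gives $r_0\geq 2$. First I would record that on $W$ the function $f_{i_0+1}$ is nowhere vanishing, since by definition $W$ is the locus $f_1=\dots=f_{i_0}=0$, $f_{i_0+1}\neq 0$ inside $Z^{(m-1)}$; hence $1/f_{i_0+1}$ is regular on $W$, and together with $b_{m-1}\in k^*$ and $i_0+1\not\equiv 0$ in $k$ (because $p>m>i_0+1$) this allows me to invert $\beta:=b_{m-1}(i_0+1)f_{i_0+1}$ on $W$.

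Next I introduce the reparametrisation $\tau_c(t):=t+c\,t^{r_0}$, viewed as an automorphism of the $k[c]$-algebra $k[c][t]/(t^m)$; its compositional inverse $\bar\tau_c$ then also has coefficients in $k[c]$. Substituting the jet variable, $x(t)\mapsto x(\tau_c(t))\bmod t^m$ defines for every value of $c$ a $k$-automorphism $\rho_c$ of the $(m-1)$-jet scheme $\AA^{nm}_k$ of $\AA^n_k$, with inverse $\bar\rho_c$ defined analogously via $\bar\tau_c$; since the coefficients of $\tau_c,\bar\tau_c$ are polynomial in $c$, the assignment $(x,c)\mapsto(\rho_c(x),c)$ is an automorphism of $\AA^{nm}_k\times\AA^1_k$. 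The central computation, and the only place where the inequalities are used, is the effect of $\rho_c$ on the jet coefficients: from $f\bigl(x(\tau_c(t))\bigr)\equiv\sum_{i=0}^{m-1}f_i(x)\,\tau_c(t)^i\pmod{t^m}$, from $f_1(x)=\dots=f_{i_0}(x)=0$ on $W$, and from $\tau_c(t)^i=t^i+i\,c\,t^{\,i+r_0-1}+(\text{terms of }t\text{-degree}\geq i+2(r_0-1))$, where $i+2(r_0-1)\geq (i_0+1)+2(r_0-1)=2m-3-i_0\geq m$ exactly because $m>i_0+2$, one reads off that $\rho_c$ leaves $f_0,\dots,f_{i_0+1}$ unchanged (so $\rho_c$, and likewise $\bar\rho_c$, preserves $W$) and sends the coefficient of $t^{m-1}$ to $f_{m-1}+(i_0+1)\,c\,f_{i_0+1}$. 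Consequently $g\circ\rho_c=g+\beta\,c$ and $g\circ\bar\rho_c=g-\beta\,c$ on $W$, while $\beta\circ\rho_c=\beta\circ\bar\rho_c=\beta$.

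With this I set $V_0:=g^{-1}(0)\cap W$ and define $\theta\colon W\to V_0\times\AA^1_k$ by $\theta(x):=\bigl(\rho_{-g(x)/\beta(x)}(x),\,g(x)\bigr)$; the first coordinate lies in $g^{-1}(0)$ because $g\circ\rho_c=g+\beta c$ and $\beta\circ\rho_c=\beta$, and $\pi\circ\theta=g|_W$ by construction. Its inverse is $(y,\lambda)\mapsto\bar\rho_{-\lambda/\beta(y)}(y)$, and checking that the two maps are mutually inverse is a short computation using $\rho_c\circ\bar\rho_c=\bar\rho_c\circ\rho_c=\mathrm{id}$, the $\beta$-invariance, and $g\circ\bar\rho_c=g-\beta c$ (for instance $\bar\rho_{-\lambda/\beta(y)}(y)$ has $g$-value $g(y)+\lambda=\lambda$ when $y\in V_0$, and is sent back by $\theta$ to $(y,\lambda)$). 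Since the reparametrisation is given by polynomials in the jet coordinates and $c$, and $\beta$ is invertible on $W$, all the maps involved are $k$-morphisms, so $\theta$ is the desired $k$-isomorphism.

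I expect the bookkeeping in the middle step to be the main obstacle: one must verify carefully that, after substituting $t\mapsto\tau_c(t)$ and reducing modulo $t^m$, the only surviving $c$-dependent contribution to the degree-$(m-1)$ coefficient is the single linear term $(i_0+1)c\,f_{i_0+1}$, and that $f_0,\dots,f_{i_0+1}$ are untouched. The three hypotheses enter precisely here: $r_0=m-1-i_0\geq 2$ makes $\tau_c$ a reparametrisation fixing the low-order jet coefficients; $m>i_0+2$ forces the powers $c^{\,j}$ with $j\geq 2$ out of range modulo $t^m$, leaving only the linear term; and $p>m$ keeps $i_0+1$ invertible in $k$, so that $\beta$ is a unit on $W$ and $\rho_c$ genuinely moves the fibres of $g$.
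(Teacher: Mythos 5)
Your proposal is correct and uses the same key idea as the paper's proof: trivialise $g=b_0f_0+b_{m-1}f_{m-1}$ on $W=Z_{k,m-1,i_0}\setminus Z_{k,m-1,i_0+1}$ by a reparametrisation of the jet variable $t$. The paper works with the general ansatz $t_1=t+\alpha_2t^2+\dots+\alpha_{m-1}t^{m-1}$ and then chooses the $\tilde\alpha_i$ inductively so as to cancel the contributions of $f_i$ for $i_0+2\leq i\leq m-2$, before solving for the last free parameter $\alpha_{m-1-i_0}$ in terms of $g(x)/f_{i_0+1}(x)$; you instead go directly to the one-parameter family $\tau_c(t)=t+c\,t^{m-1-i_0}$, and observe that on $W$ the $j\geq 2$ terms of $\tau_c(t)^i$ already fall out of range modulo $t^m$ (precisely because $m>i_0+2$), so no cancellation step is needed. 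This is a genuine, if small, streamlining: your computation makes it transparent that the only surviving $c$-dependence in the degree-$(m-1)$ coefficient is the linear term $(i_0+1)c\,f_{i_0+1}$, whereas in the paper this fact is hidden in the recursion (in fact one can check that the paper's $\tilde\alpha_i$ are forced to be $0$ for $i\geq 2$, so the two reparametrisations coincide). Your handling of the inverse via $\bar\tau_c$ and the verification that the coefficients of $\bar\tau_c$ stay polynomial in $c$ are also fine, and your identification of where each hypothesis ($p>m$, $m>i_0+2$, $f_{i_0+1}\neq 0$ on $W$) is used is accurate.
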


\begin{proof}
To simplify notation, we write $V=Z_{k,m-1,i_0}\setminus Z_{k,m-1,i_0+1}$  and we take $V_{0}=(b_0f_0+b_{m-1}f_{m-1}|_{V})^{-1}(0)$. We will construct a $k$-isomorphism $\theta:V\to V_{0}\times\AA_{k}^1$, using the reparameterization idea as in the proof of Lemma \ref{changechar}. More precisely, we consider
$t_1= \sum_{i\geq 1}\alpha_it^i$ with all $\alpha_i\in k$ and moreover $\alpha_1=1$.
Let $x^{(m-1)}=(x_{ij})_{0\leq i\leq m-1, 1\leq j\leq n}\in V$, where we use the coordinate system on $A_{m-1}$ with respect to the parameter $t$.
We also consider the  $(m-1)$-jet
$$  u:=(x_{01}+x_{11}t_1+\dots+x_{(m-1)1}t_1^{m-1},\dots, x_{0n}+x_{1n}t_1+\dots+x_{(m-1)n}t_1^{m-1})$$
in $k[[t]]/(t^m)$, that can alternatively be written in the form
$$  u=(y_{01}+y_{11}t+\dots+y_{(m-1)1}t^{m-1},\dots, y_{0n}+y_{1n}t+\dots+y_{(m-1)n}t^{m-1}).$$
Then $y_{ij}=g_{i}(x_{0j},\dots,x_{ij},\alpha_1,\dots,\alpha_i)$ for all $ij$, where $g_i, 0\leq i\leq m-1,$ are  polynomials with coefficients in $k$. In particular, we have $x^{(0)}=y^{(0)}$.  We have also that
$$f(u)=f_0(x^{(0)})+f_{i_0+1}(x^{(i_0+1)})t_1^{i_0+1}+\dots+f_{m-1}(x^{(m-1)})t_1^{m-1} \mod t^{m}$$
and
$$f(u)=f_0(y^{(0)})+f_{i_0+1}(y^{(i_0+1)})t^{i_0+1}+\dots+f_{m-1}(y^{(m-1)})t^{m-1} \mod t^{m},$$
with $f_{i_0+1}(x^{(i_0+1)})=f_{i_0+1}(y^{(i_0+1)})\neq 0$ since $x^{(m-1)}\in V$ and $t_1= t+\sum_{i\geq 2}\alpha_it^i$. Using again that $t_1=t+\sum_{i\geq 2}\alpha_it^i$, we have
$$f_{m-1}(y^{(m-1)})=\sum_{i=i_0+1}^{m-2}(h_i(\alpha_1,\dots,\alpha_{m-1-i})+i\alpha_{m-i})f_i(x^{(i)})+f_{m-1}(x^{(m-1)}),$$
where $h_i, i_0+1\leq i\leq m-2,$ are polynomials over $k$.  Inspired by this relation, we define functions $\alpha_{i}(x^{(m-1)})$ as follows.
First, since $m<p$, we can construct by induction the values $\tilde{\alpha}_{1}=1, \tilde{\alpha}_{2}, \dots,\tilde{\alpha}_{m-i_0-2}$ such that $ h_i(\tilde{\alpha}_{1},\dots,\tilde{\alpha}_{m-i-1})+i \tilde{\alpha}_{m-i}=0$ for all $i_0+2\leq i\leq m-2$. Then we set
\begin{align*}
 \alpha_{i}(x^{(m-1)}) &= \tilde{\alpha}_{i} \qquad\text{ for } 1\leq i\leq m-i_0-2 ,  \\
 \alpha_{m-i_0-1}(x^{(m-1)})  &= \frac{1}{i_0+1} \Biggl(\frac{-\frac{b_0}{b_{m-1}}f_0(x^{(0)})-f_{m-1}(x^{(m-1)})}{f_{i_0+1}(x^{(i_0+1)})}   \\
&\qquad
 -h_{i_0+1}(\tilde{\alpha}_{1},\dots,\tilde{\alpha}_{m-i_0-2})\Biggr) , \\
 \alpha_{i}(x^{(m-1)})  &= 0  \qquad\text{ for } i>m-i_0-1.
\end{align*}
The point is that, with these definitions,
$$\bigl(g_{i}(x_{0j},\dots,x_{ij},\alpha_{1}(x^{(m-1)}),\dots,\alpha_{i}(x^{(m-1)}))\bigr)_{0\leq i\leq m-1, 1\leq j\leq n}$$
is mapped to $0$ by $(b_0f_0+b_{m-1}f_{m-1})|_V$.
And then the morphism $\theta:V\to\AA_{k}^1\times V_0$ given by
\begin{align*}
\theta(x^{(m-1)})=&\Bigl[(b_0f_0+b_{m-1}f_{m-1})(x^{(m-1)}),  \\
&\bigl(g_{i}(x_{0j},\dots,x_{ij},\alpha_{1}(x^{(m-1)}),\dots,\alpha_{i}(x^{(m-1)}))\bigr)_{0\leq i\leq m-1, 1\leq j\leq n}\Bigr]
\end{align*}
is indeed well defined, and has an inverse morphism by construction. It is also clear that $\pi\circ\theta=(b_0f_0+b_{m-1}f_{m-1})|_V$.
\end{proof}

\begin{cor}\label{zero1} We use Notation \ref{notation for finite fields}. Suppose that $p>m>i_0+2\geq 2$. Then we have for all $i$ that
$$\H_c^i((Z_{m-1,i_0}\backslash Z_{m-1,i_0+1})\otimes_{\FF_q}\overline{\FF}_p,\cL_{f,b,\Psi})=0 .$$
\end{cor}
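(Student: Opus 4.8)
The plan is to read the vanishing off directly from the trivial-fibration structure produced in Lemma \ref{fibration}. Write $g=b_0f_0+b_{m-1}f_{m-1}$ and $V=Z_{m-1,i_0}\setminus Z_{m-1,i_0+1}$; we may assume $V\neq\emptyset$, since otherwise there is nothing to prove. By Lemma \ref{fibration} there is a $k$-scheme $V_0$ and a $k$-isomorphism $\theta\colon V\iso V_0\times\AA_k^1$ with $\pi\circ\theta=g|_V$, where $\pi$ is the projection onto the $\AA^1$-factor. The key observation is that, under $\theta$, the Artin--Schreier sheaf becomes a pullback from that $\AA^1$-factor: indeed $\cL_{f,b,\Psi}|_V=(g|_V)^*\cL_\Psi=(\pi\circ\theta)^*\cL_\Psi=\theta^*(\pi^*\cL_\Psi)$. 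Base changing everything to $\overline{\FF}_p$ (harmless, as $\theta$, $V_0$, $g$ and $b$ are defined over the finite field $k$), we obtain for every $i$ an isomorphism $\H_c^i(V\otimes_{\FF_q}\overline{\FF}_p,\cL_{f,b,\Psi})\cong\H_c^i\big((V_0\times\AA^1)\otimes_{\FF_q}\overline{\FF}_p,\pi^*\cL_\Psi\big)$.

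Next I would apply the Künneth formula for cohomology with compact support over the algebraically closed field $\overline{\FF}_p$: since $\pi^*\cL_\Psi=\overline{\QQ}_\ell\boxtimes\cL_\Psi$, one has $R\Gamma_c\big((V_0\times\AA^1)_{\overline{\FF}_p},\pi^*\cL_\Psi\big)\cong R\Gamma_c\big((V_0)_{\overline{\FF}_p},\overline{\QQ}_\ell\big)\otimes^{L}R\Gamma_c\big(\AA^1_{\overline{\FF}_p},\cL_\Psi\big)$. So it suffices to recall the classical fact that $R\Gamma_c(\AA^1_{\overline{\FF}_p},\cL_\Psi)$ is acyclic for the nontrivial Artin--Schreier sheaf $\cL_\Psi$. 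Concretely: $\H_c^0$ vanishes because $\cL_\Psi$ is a nonzero lisse sheaf on the connected non-proper curve $\AA^1$; $\H_c^2(\AA^1_{\overline{\FF}_p},\cL_\Psi)$ is, by Poincaré duality, dual to $\H^0(\AA^1_{\overline{\FF}_p},\cL_\Psi^\vee)$, which is $0$ since $\cL_\Psi$ has nontrivial monodromy; and the Grothendieck--Ogg--Shafarevich formula gives $\chi_c(\AA^1_{\overline{\FF}_p},\cL_\Psi)=\operatorname{rk}(\cL_\Psi)\cdot\chi_c(\AA^1_{\overline{\FF}_p})-\operatorname{Sw}_\infty(\cL_\Psi)=1-1=0$, forcing $\H_c^1=0$ as well (alternatively, invoke Deligne's computation of such sums, e.g. \cite{Deligneetalecoho}). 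Combining, every summand in the Künneth decomposition vanishes, hence $\H_c^i(V\otimes_{\FF_q}\overline{\FF}_p,\cL_{f,b,\Psi})=0$ for all $i$.

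I do not expect a genuine obstacle here: all the geometric content is already packaged in Lemma \ref{fibration}, and what remains is the standard vanishing of the cohomology of an Artin--Schreier sheaf on the affine line together with a Künneth argument. The only points that deserve a line of care are the compatibility of $\theta$ with the base change to $\overline{\FF}_p$ and the use of Künneth with non-constant coefficients on the $\AA^1$-factor, which is valid for arbitrary constructible $\overline{\QQ}_\ell$-sheaves on separated schemes of finite type over an algebraically closed field; neither causes any difficulty.
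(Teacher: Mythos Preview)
Your proof is correct and follows essentially the same approach as the paper: use Lemma \ref{fibration} to identify $V$ with $V_0\times\AA^1$ so that the sheaf becomes $\overline{\QQ}_\ell\boxtimes\cL_\Psi$, then apply K\"unneth together with the vanishing $\H_c^i(\AA^1_{\overline{\FF}_p},\cL_\Psi)=0$. The only difference is that the paper dispatches this last vanishing by a direct reference to \cite[Lemme 8.5]{DeligneWeil1}, whereas you unpack it via Poincar\'e duality and Grothendieck--Ogg--Shafarevich; both are fine.
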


\begin{proof}
We remark that $\H_c^i(\AA_{\overline{\FF}_p}^1, \cL_{\Psi})=0$ for all $i$ (see \cite[Lemme 8.5]{DeligneWeil1}). Then the claim follows from Lemma \ref{fibration} and the K\"unneth formula.
\end{proof}

\begin{cor}\label{zero2} We use Notation \ref{notation for finite fields}. Suppose that $p>m\geq2$.

(1) Then we have for all $i$ that
$$\H_c^i((Z^{(m-1)}\backslash Z_{m-1,m-2})\otimes_{\FF_q}\overline{\FF}_p,\cL_{f,b,\Psi})=0$$
and
$$\H_c^i(Z^{(m-1)}\otimes_{\FF_q}\overline{\FF}_p,\cL_{f,b,\Psi})=\H_c^i(Z_{m-1,m-2}\otimes_{\FF_q}\overline{\FF}_p,\cL_{f,b,\Psi}).$$

(2) In particular,  we have that
$$\H_c^i(Z^{(m-1)}\otimes_{\FF_q}\overline{\FF}_p,\cL_{f,b,\Psi})=0$$
if $i>2\dim(Z_{m-1,m-2})$.
\end{cor}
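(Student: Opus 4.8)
The plan is a routine dévissage along the filtration
$$Z_{m-1,m-2}\subset Z_{m-1,m-3}\subset\cdots\subset Z_{m-1,0}=Z^{(m-1)},$$
feeding Corollary \ref{zero1} into the long exact sequences for compactly supported cohomology attached to open–closed decompositions. First I would dispose of the degenerate case $m=2$: there $Z_{m-1,m-2}=Z_{1,0}=Z^{(1)}=Z^{(m-1)}$, so $Z^{(m-1)}\setminus Z_{m-1,m-2}=\emptyset$ and both assertions of (1) are vacuous. So assume $m\ge 3$ and put $X:=Z^{(m-1)}\setminus Z_{m-1,m-2}$. Then $X$ is the disjoint union of the locally closed subschemes $S_{i_0}:=Z_{m-1,i_0}\setminus Z_{m-1,i_0+1}$ for $0\le i_0\le m-3$ (the upper bound being exactly what guarantees $Z_{m-1,m-2}\subset Z_{m-1,i_0+1}$). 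For each such $i_0$ one has $2\le i_0+2\le m-1<m<p$, so the hypotheses of Corollary \ref{zero1} hold and $\H_c^i(S_{i_0}\otimes_{\FF_q}\overline{\FF}_p,\cL_{f,b,\Psi})=0$ for all $i$.

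Next I would glue these vanishings together. Set $C_{i_0}:=Z_{m-1,i_0}\setminus Z_{m-1,m-2}=X\cap Z_{m-1,i_0}$; then $C_{i_0+1}$ is closed in $C_{i_0}$ with open complement $S_{i_0}$, while $C_0=X$ and $C_{m-2}=\emptyset$. The long exact sequence
$$\cdots\to\H_c^i(S_{i_0}\otimes\overline{\FF}_p,\cL_{f,b,\Psi})\to\H_c^i(C_{i_0}\otimes\overline{\FF}_p,\cL_{f,b,\Psi})\to\H_c^i(C_{i_0+1}\otimes\overline{\FF}_p,\cL_{f,b,\Psi})\to\cdots$$
together with the stratum vanishing gives, by descending induction on $i_0$ starting from $C_{m-2}=\emptyset$, that $\H_c^i(C_{i_0}\otimes\overline{\FF}_p,\cL_{f,b,\Psi})=0$ for all $i$; taking $i_0=0$ is the first assertion of (1). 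For the second assertion I would use the long exact sequence for the decomposition of $Z^{(m-1)}$ into the open subscheme $X$ and the closed subscheme $Z_{m-1,m-2}$: since the $X$-terms vanish, the restriction maps $\H_c^i(Z^{(m-1)}\otimes\overline{\FF}_p,\cL_{f,b,\Psi})\to\H_c^i(Z_{m-1,m-2}\otimes\overline{\FF}_p,\cL_{f,b,\Psi})$ are isomorphisms for all $i$.

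Finally, part (2) is immediate from (1): $\cL_{f,b,\Psi}$ restricts to a constructible $\ell$-adic sheaf on the finite-type $\overline{\FF}_p$-scheme $Z_{m-1,m-2}\otimes_{\FF_q}\overline{\FF}_p$, which has dimension $\dim(Z_{m-1,m-2})$, and the standard vanishing of étale cohomology with compact support above twice the dimension yields $\H_c^i(Z_{m-1,m-2}\otimes\overline{\FF}_p,\cL_{f,b,\Psi})=0$ for $i>2\dim(Z_{m-1,m-2})$; combined with (1) this is the claim.

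I do not expect a genuine obstacle here: the argument is pure bookkeeping. The only points needing care are tracking the index range so that the numerical hypothesis $p>m>i_0+2\ge 2$ of Corollary \ref{zero1} is met for \emph{every} stratum $S_{i_0}$ — this is precisely what forces the cut-off at $Z_{m-1,m-2}$ rather than $Z_{m-1,m-1}$ — handling the trivial case $m=2$ separately, and citing clean references for the excision long exact sequence for compactly supported cohomology and for the $2\dim$ vanishing bound.
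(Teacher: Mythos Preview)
Your proof is correct and follows essentially the same approach as the paper: handle $m=2$ trivially, apply Corollary \ref{zero1} to each stratum $Z_{m-1,i_0}\setminus Z_{m-1,i_0+1}$ for $0\le i_0\le m-3$, and assemble the vanishings. The only cosmetic difference is that the paper packages your inductive chain of excision long exact sequences as a single citation to the stratification spectral sequence of \cite[Sommes trig.\ 2.5*]{Deligneetalecoho}, and then invokes the open--closed long exact sequence for $Z^{(m-1)}\setminus Z_{m-1,m-2}\hookrightarrow Z^{(m-1)}\hookleftarrow Z_{m-1,m-2}$ exactly as you do; part (2) is handled identically.
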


\begin{proof}
(1) The first claim is trivial if $m=2$, and follows for $m>2$ by using Corollary \ref{zero1} and the spectral sequence from \cite[Sommes trig. 2.5*]{Deligneetalecoho}.
The second  claim is implied by the first and by the long exact sequence for the triple $Z^{(m-1)}\backslash Z_{m-1,m-2}\hookrightarrow Z^{(m-1)}\hookleftarrow Z_{m-1,m-2}$ \cite[Sommes trig. 2.5*]{Deligneetalecoho}, namely
\begin{align*}
\dots\rightarrow &\H_c^i((Z^{(m-1)}\backslash Z_{m-1,m-2})\otimes_{\FF_q}\overline{\FF}_p,\cL_{f,b,\Psi})\\
\rightarrow &\H_c^i(Z^{(m-1)}\otimes_{\FF_q}\overline{\FF}_p,\cL_{f,b,\Psi})\\
\rightarrow &\H_c^i(Z_{m-1,m-2}\otimes_{\FF_q}\overline{\FF}_p,\cL_{f,b,\Psi})\rightarrow\dots .
\end{align*}
(2) This is then immediate since
$\H_c^i(Z_{m-1,m-2}\otimes_{\FF_q}\overline{\FF}_p,\cL_{f,b,\Psi})=0$
if $i>2\dim(Z_{m-1,m-2})$.
\end{proof}

For later use, we mention a  special fibration property when $f$ is smooth.

\begin{lem}\label{smooth} We use Notation \ref{notation for finite fields}.
Suppose that $f$ is smooth at $Z$. Then there exists a stratification of $Z_{k,m-1,m-2}$ into $n$ locally closed subsets,  such that the restriction of $b_0 f_{0}+b_{m-1}f_{m-1}$ to each subset is a trivial fibration.
\end{lem}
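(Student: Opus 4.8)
The plan is to stratify the base $Z$ according to which partial derivative of $f$ is the first one not vanishing, and then to eliminate jet variables explicitly on each stratum. Since $f$ is smooth at $Z$, no point of $Z$ is a critical point of $f$, i.e.\ the closed set $V(\partial_{x_1}f,\dots,\partial_{x_n}f)$ is disjoint from $Z$; hence for each $1\le j\le n$ the locus
$$S_j:=\{\,a\in Z \mid \partial_{x_1}f(a)=\dots=\partial_{x_{j-1}}f(a)=0,\ \partial_{x_j}f(a)\ne 0\,\}$$
is a locally closed subscheme of $Z$, and the $S_j$ (some possibly empty) cover $Z$. Writing $\pi\colon Z_k^{(m-1)}\to Z_k$ for the canonical projection, the claim is that the $n$ locally closed subsets $Z_{k,m-1,m-2}\cap\pi^{-1}(S_j)$, $1\le j\le n$, form the required stratification.

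First I would record the (routine) shape of the jet polynomials: expanding $f(x_0+x_1t+\dots+x_{m-1}t^{m-1})$ in powers of $t$ gives, for $\ell\ge 1$,
$$f_\ell=\sum_{i=1}^{n}\partial_{x_i}f\bigl(x^{(0)}\bigr)\,x_{\ell i}+P_\ell\bigl(x^{(\ell-1)}\bigr),$$
where $P_\ell$ involves only the jet variables of order $<\ell$, because a monomial of $t$-degree $\ell$ in the ``variable part'' $x_1t+\dots+x_{m-1}t^{m-1}$ is either linear in the $x_{\ell i}$, giving the displayed sum, or has degree $\ge 2$ and so is built only from $x_1,\dots,x_{\ell-1}$. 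Next, on $\pi^{-1}(S_j)$ the function $\partial_{x_j}f(x^{(0)})$ is a unit, so I would solve the defining equations $f_1=\dots=f_{m-2}=0$ of $Z_{k,m-1,m-2}$ successively: $f_1=0$ determines $x_{1j}$ as a regular function of $x^{(0)}$ and $(x_{1i})_{i\ne j}$; substituting, $f_2=0$ determines $x_{2j}$; and so on through $f_{m-2}=0$. This identifies $Z_{k,m-1,m-2}\cap\pi^{-1}(S_j)$ with $S_j\times\AA^N_k$ for a suitable $N$, the $\AA^N_k$-factor carrying the coordinates $(x_{\ell i})_{1\le\ell\le m-2,\,i\ne j}$ and $(x_{m-1,i})_{1\le i\le n}$. (When $m=2$ there is no equation to solve and $Z_{k,1,0}=Z_k^{(1)}$.)

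Under this identification, $f_0=f(x^{(0)})$ is unchanged while $f_{m-1}=\sum_i\partial_{x_i}f(x^{(0)})\,x_{m-1,i}+\widetilde P_{m-1}$, with $\widetilde P_{m-1}$, the substituted form of $P_{m-1}$, not involving $x_{m-1}$; hence $b_0f_0+b_{m-1}f_{m-1}$ contains the monomial $b_{m-1}\,\partial_{x_j}f(x^{(0)})\,x_{m-1,j}$ whose coefficient is a unit on $S_j$, because $b_{m-1}\in k^{*}$. Since $b_0f_0+b_{m-1}f_{m-1}$ depends on $x_{m-1,j}$ linearly with unit coefficient and otherwise only on the remaining coordinates, replacing $x_{m-1,j}$ by the new coordinate $y:=b_0f_0+b_{m-1}f_{m-1}$ is an isomorphism of schemes over $S_j$. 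This exhibits $Z_{k,m-1,m-2}\cap\pi^{-1}(S_j)\cong V_{0,j}\times\AA^1_k$, with $b_0f_0+b_{m-1}f_{m-1}$ equal to the projection onto $\AA^1_k$, i.e.\ a trivial fibration in the sense of Lemma \ref{fibration}, where $V_{0,j}$ is $S_j$ times the affine space on the remaining coordinates. The point I expect to need care is checking that the successive elimination of the $x_{\ell j}$ and the final change of coordinate really are isomorphisms of schemes over the (possibly singular, possibly reducible) stratum $S_j$, and not merely bijections on points; this comes down to the triangular shape of the relevant Jacobians, but deserves to be written out.
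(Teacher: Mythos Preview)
Your proof is correct and follows essentially the same approach as the paper: the strata $Z_{k,m-1,m-2}\cap\pi^{-1}(S_j)$ are exactly the paper's sets $A_j$, and the key step---using that $b_0f_0+b_{m-1}f_{m-1}$ is linear in $x_{m-1,j}$ with unit coefficient $b_{m-1}\,\partial_{x_j}f(x^{(0)})$ on $A_j$---is the same. The paper's argument is slightly shorter because it skips your intermediate elimination of the lower-order variables $x_{\ell j}$ ($1\le\ell\le m-2$): since the defining conditions of $Z_{k,m-1,m-2}$ do not involve the top jet coordinates $x_{m-1,i}$, one can make the coordinate change $x_{m-1,j}\mapsto b_0f_0+b_{m-1}f_{m-1}$ directly on $A_j$ without first trivializing the rest.
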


\begin{proof}
Let $x^{(m-1)}\in Z_{k,m-1,m-2}$ and $x\in (k[[t]]/(t^m))^n$ the associated jet. In particular, $y^{(m-1)}\in  Z_{k,m-1,m-2}$ if $y\in x+ t^{m-1}k[[t]]^n$. By a simple calculation we have
\begin{align*}
f(x+t^{m-1}z)&=f_0(x^{(0)}) + f_{m-1}(x^{(m-1)})t^{m-1} \\
&+\sum_{j=1}^{n}\frac{\partial f}{\partial x_j}(x_{01},x_{02},\dots,x_{0n})t^{m-1}z_{0j} \mod t^m
\end{align*}
for $z\in k[[t]]^n$. We take
\begin{align*}
A_j:=&\{x^{(m-1)}\in Z_{k,m-1,m-2}\mid \frac{\partial f}{\partial x_i}(x_{01},x_{02},\dots,x_{0n})=0 \quad\text{for all } i<j, \\
&\text{and } \frac{\partial f}{\partial x_j}(x_{01},x_{02},\dots,x_{0n})\neq 0\}.
\end{align*}
Since $f$ is smooth at $Z$, it is clear that the restriction of $b_0 f_{0}+ b_{m-1}f_{m-1}$ to $A_j$ is a trivial fibration for all $j$.
\end{proof}

\section{Link with characteristic zero}\label{section5}

Here we link the exponential sums in positive characteristic of the previous section to our original exponential sums through a transfer principle.
In particular, we then show that a conjectural relation between the highest weight and the motivic oscillation index has important corollaries.
We first generalize some notation and terminology of Section \ref{section2} to non-archimedean fields of arbitrary characteristic.

\smallskip
Recall that, for a positive integer $M$, we denote by $\cL_M$ the set of all non-archimedean local fields of characteristic $0$ with residue field characteristic at least $M$, and by $\cL_{K,M}$ the set of fields in $\cL_M$ endowed with an embedding of $K$.
We denote similarly by $\cL'_M$ the set of all  non-archimedean local fields of characteristic at least $M$,
and by  $\cL'_{K,M}$ the set of fields in  $\cL'_M$ which are endowed with a structure of  $\cO_K$-algebra.  (Note that this is consistent; in characteristic zero a structure of $\cO_K$-algebra implies an embedding of $K$.)

For example, in the basic case $K=\QQ$, we have that $\cL'_{M}=\cL'_{K,M}$ consists of the fields $\FF_q((t))$, with characteristic of $\FF_q$ at least $M$.

\begin{notation}\label{notation2}
Let $f\in\overline{\QQ}[x_1,\dots,x_n]$ be a non-constant polynomial and $Z$ a reduced subscheme of $\AA_{\overline{\QQ}}^n$.
Let $\cO$ be a finitely generated $\ZZ$-subalgebra of $\overline{\QQ}$ over which $f$ and $Z$ are defined.
With the latter we mean that $Z$ is induced by a subscheme of $\AA_{\cO}^n$, that we denote also by $Z$.

(1) We say that a non-archimedean field $L$ {\em is over $\cO$} if $\cO_L$ is endowed with an $\cO$-algebra structure $\varphi:\cO\to \cO_L$. We denote the induced morphism between polynomial rings over these rings also by $\varphi$.
If there is no confusion about $\varphi$, we still denote $f$ for $\varphi(f)\in \cO_L[x_1,\dots,x_n]$ and $f_{k_L}$ or $f\mod \cM_L$ for the image of $\varphi(f)$ in $k_L[x_1,\dots,x_n]$.
For such $L$ we define the Schwartz-Bruhat function $\Phi_{L,Z}:=\11_{\{x\in\cO_L^n| \overline{x}\in Z(k_L)\}}$.

(2) Let $c_1,\dots,c_r$ be the critical values of $f$, these are elements of $\overline{\QQ}$. Put $\cO_i=\cO[c_i]$ and let $Z_i$ be the intersection of $Z\otimes_\cO\cO_i$ with $\{f=c_i\}$ in $\AA^n_{\cO_i}$.
\end{notation}

\subsection{A useful property of weights}

The following facts are easily verified (for example, by
model theoretic compactness).

\begin{lem}\label{critical values}
Fix a number field $K \supset \cO$.
For each $i=1,\dots,r$ we denote by $g_i\in K[y]$ the minimal polynomial of $c_i$ over $K$. There exists a constant $M$, depending only on $f$, such that for all $L\in \cL_{K,M}\cup\cL'_{K,M}$ the following facts hold:
\begin{itemize}
\item[(i)]$L$ is over $\cO$ with structure map $\varphi:\cO\to\cO_L$, that extends to a map $\varphi: \cO[c_1,\dots,c_r]\to \overline{\cO}_L$, where $\overline{\cO}_L$ is the integral closure of $\cO_L$ in $\overline{L}$,
\item[(ii)]$\varphi(c_i)$ is integral over $\cO_L$ for all $i=1,\dots,r$,  hence
$\varphi(g_i)\in \cO_L[y]$ and $\varphi(g_i) \mod \cM_L$ has only simple roots over $\overline{k}_L$,
\item[(iii)]if $i\neq j$ and $L$ is over $\cO_i$ and $\cO_j$ with the structure maps induced by $\varphi$,  then $\Supp(\Phi_{L,Z_i})\cap\Supp(\Phi_{L,Z_j})=\emptyset$,
\item[(iv)]the critical values of  $f \mod \cM_L$ over $\overline{k}_L$ are $\overline{\varphi (c_i)}, 1\leq i\leq r$ (the notation $\overline{\varphi(c_i)}$ is well-defined by (i) and (ii)).
\end{itemize}
\end{lem}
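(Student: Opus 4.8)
The plan is to treat each of (i)--(iv) as an elementary consequence of spreading out over $\cO$ together with model theoretic compactness, uniformly in $L \in \cL_{K,M} \cup \cL'_{K,M}$. First I would fix $\cO$ as a finitely generated $\ZZ$-algebra over which $f$, $Z$ and the coefficients of the minimal polynomials $g_i$ (together with the inverses of their leading coefficients and the nonzero resultants/discriminants described below) are all defined; enlarging $\cO$ if necessary, I may assume these auxiliary elements lie in $\cO$. Since $\cO$ is finitely generated over $\ZZ$, there is a finite list of elements of $\cO \setminus \{0\}$ that I need to be nonzero in the residue field $k_L$, and choosing $M$ larger than all rational primes dividing the norms of these elements (equivalently, invoking compactness: the finitely many defining sentences are true in all but finitely many residue characteristics) guarantees (i) and the integrality statements. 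Concretely, for (i)--(ii): each $c_i$ is a root of the monic-up-to-leading-coefficient polynomial $g_i$; after inverting the leading coefficient of $g_i$ in $\cO$ we may take $g_i$ monic, so any extension $\varphi : \cO \to \cO_L$ sends the roots $\varphi(c_i)$ into the integral closure $\overline{\cO}_L$, and $\varphi(g_i) \in \cO_L[y]$ automatically; requiring the discriminant of $g_i$ (a nonzero element of $\cO$, hence a unit mod $\cM_L$ for $p_L$ large) to be nonzero mod $\cM_L$ forces $\varphi(g_i) \bmod \cM_L$ to be separable, giving only simple roots over $\overline{k}_L$.

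For (iii): the disjointness of $\Supp(\Phi_{L,Z_i})$ and $\Supp(\Phi_{L,Z_j})$ for $i \neq j$ says exactly that no point of $\cO_L^n$ reduces into both $Z_i(k_L)$ and $Z_j(k_L)$; since $Z_i \subset \{f = c_i\}$ and $Z_j \subset \{f = c_j\}$ with $c_i \neq c_j$, a common reduction point $\overline{x}$ would satisfy $\overline{\varphi(c_i)} = f(\overline{x}) = \overline{\varphi(c_j)}$, so it suffices that $\overline{\varphi(c_i)} \neq \overline{\varphi(c_j)}$ in $\overline{k}_L$; but $c_i - c_j$ is a nonzero algebraic number whose norm down to $\QQ$ (equivalently, the relevant resultant of $g_i$ and $g_j$) lies in $\cO$ and is nonzero, hence a unit mod $\cM_L$ once $p_L \geq M$, so its image stays nonzero. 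For (iv): the critical values of $f \bmod \cM_L$ are the images of the critical points of $f \bmod \cM_L$; over a field of residue characteristic at least $M$, the reduction of the critical scheme $\{f = t, \nabla f = 0\} \subset \AA^{n+1}$ behaves well (the finitely many components, the number of critical values, and the integral dependence of each $c_i$ on $\cO$ all spread out), so the critical values of the reduction are precisely the reductions $\overline{\varphi(c_i)}$ of the critical values of $f$; this again is a first-order statement in the language of rings with parameters in $\cO$ that holds in $\CC$ (or $\overline{\QQ}$) and therefore, by compactness / the Lefschetz principle for residue fields of large characteristic, in all $\overline{k}_L$ with $p_L$ large.

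The common thread, and the only point requiring any care, is making the single constant $M$ work simultaneously for both families $\cL_{K,M}$ (mixed characteristic) and $\cL'_{K,M}$ (equal positive characteristic): the argument is purely about the residue field $k_L$ and the reduction map, so nothing about the characteristic of $L$ itself enters, and the uniform $M$ is extracted from the finitely many nonzero elements of $\cO$ that must remain invertible. I expect the main (mild) obstacle to be bookkeeping in (iv): one must argue that no \emph{new} critical values appear after reduction and no two of the $\overline{\varphi(c_i)}$ collide — the first is handled by spreading out the critical locus flatly over an open subscheme of $\Spec \cO$, the second reduces to the resultant non-vanishing already used for (iii). Everything else is the standard "finitely many bad primes" mechanism, which is why the lemma is stated as "easily verified."
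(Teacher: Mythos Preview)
The paper does not prove this lemma; it merely says beforehand that ``the following facts are easily verified (for example, by model theoretic compactness).'' Your proposal is a correct and reasonably careful fleshing out of precisely that approach---inverting finitely many discriminants, resultants, and leading coefficients in $\cO$ and invoking compactness/Lefschetz to handle all residue fields of large characteristic uniformly---so it matches the paper's intended argument.
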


\begin{prop}\label{reduction} Take $f$ and $Z$  as in Notation \ref{notation2}.
For all $m\geq 2$, there exists a number $M_m$, depending on $m$,  such that for all local fields $L=k_L((t))\in\cL'_{K,M_m}$ and for all $(b_0,b_{m-1})\in k_L\times k_L^{*}$,  we have
\begin{align*}
w(b_0f_{k_L,0}+b_{m-1}f_{k_L,m-1},Z_{k_L}^{(m-1)})&=w(b_{m-1}f_{k_L,m-1},Z_{k_L}^{m-1})\\
&=\max_{i\in I_L}w(b_{m-1}f_{k_L,m-1},Z_{i,k_L}^{(m-1)})\\
&=\max_{i\in I_L}w(b_0f_{k_L,0}+b_{m-1}f_{k_L,m-1},Z_{i,k_L}^{(m-1)})
\end{align*}
where $I_L=\{1\leq i\leq r|\varphi(\cO_i)\subset L\}$.
\end{prop}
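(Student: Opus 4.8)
The plan is to establish the four equalities by combining three ingredients: the reparameterization simplification (Proposition \ref{simplified expsum}) which already tells us that $b_0f_{k_L,0}+b_{m-1}f_{k_L,m-1}$ and $b_{m-1}f_{k_L,m-1}$ yield the same exponential sums — hence, by the Grothendieck trace formula applied over all finite extensions of $k_L$ together with the purity theorem of Deligne, the same highest weight — giving the first equality; the stratification of $Z^{(m-1)}$ by the nested subschemes $Z_{m-1,i}$ together with the vanishing results of Corollaries \ref{zero1} and \ref{zero2}, which reduce the computation of cohomology (and hence of the highest weight) to the closed stratum $Z_{m-1,m-2}$; and Lemma \ref{critical values}(iii)--(iv), which says that for $L$ with large enough residue characteristic the reduced polynomial $f\mod\cM_L$ has critical values exactly $\overline{\varphi(c_i)}$ for $i\in I_L$, and that the $Z_{i,k_L}$ have disjoint supports.

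First I would fix $m\geq 2$ and choose $M_m$ larger than the constants coming from Proposition \ref{simplified expsum} (requiring $p>m$), from Corollaries \ref{zero1}, \ref{zero2} (again $p>m$), and from Lemma \ref{critical values} (a constant depending only on $f$); taking the maximum and enlarging once more so that all the scheme-theoretic facts below hold over $\overline{k}_L$. The first equality then follows as indicated: Proposition \ref{simplified expsum} gives $E_{f,k((t)),Z,\psi_z}$ in terms of $b_0f_0+b_{m-1}f_{m-1}$ for every finite $k\supset k_L$ and every $z$ of order $-m$, but one checks directly that replacing the jet-coordinate shift shows these sums only depend on $b_{m-1}$ through $f_{m-1}$; comparing Frobenius traces over all $\FF_{q^r}$ and invoking Deligne's purity forces the top weights to coincide. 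Alternatively, and more robustly, Lemma \ref{fibration} with $i_0=m-2$ is not available, but the constancy of the sum under the change of variable $t\mapsto t_1$ identifies the two Artin--Schreier sheaves up to an automorphism of $Z^{(m-1)}$, so their compactly supported cohomology is canonically isomorphic as Frobenius modules.

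Next I would address the middle equalities, which are the substantive point. By Corollary \ref{zero2}(1) we have $\H^i_c(Z^{(m-1)}\otimes\overline{\FF}_p,\cL_{f,b,\Psi})=\H^i_c(Z_{m-1,m-2}\otimes\overline{\FF}_p,\cL_{f,b,\Psi})$ for all $i$, so the highest weight is computed on $Z_{m-1,m-2}$. The key observation is that a point $x^{(m-1)}$ lies in $Z_{m-1,m-2}$ precisely when $f_1(x^{(0)})=\dots=f_{m-2}(x^{(m-2)})=0$; since $f_1(x^{(0)})$ is (up to unit) the differential of $f$ at $x^{(0)}$, the locus where $x^{(0)}$ is \emph{not} a critical point of $f$ contributes nothing — indeed on that locus the fibration argument of Lemma \ref{fibration} (with $i_0$ the first index where $f_{i_0+1}\neq 0$, which occurs before $m-1$) applies and kills the cohomology. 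Hence $Z_{m-1,m-2}$ may be replaced by its intersection with $\pi_{(m-1)0}^{-1}(C_f)$ where $C_f$ is the critical locus of $f\mod\cM_L$; and $f$ restricted to each connected component of $C_f$ is locally constant, equal to one of the critical values $\overline{\varphi(c_i)}$, $i\in I_L$, by Lemma \ref{critical values}(iv). This decomposes $Z_{m-1,m-2}$ (up to cohomologically irrelevant pieces) as the disjoint union over $i\in I_L$ of the corresponding pieces of $Z_{i,k_L,m-1,m-2}$, which by Lemma \ref{critical values}(iii) are genuinely disjoint; the additivity of compactly supported cohomology over a disjoint (open-closed) decomposition, together with the first equality applied to each $f-c_i$, yields $w(b_{m-1}f_{k_L,m-1},Z^{(m-1)}_{k_L})=\max_{i\in I_L}w(b_{m-1}f_{k_L,m-1},Z^{(m-1)}_{i,k_L})$ and simultaneously the fourth equality.

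\textbf{Main obstacle.} The delicate step is the reduction of $Z_{m-1,m-2}$ to the part lying over the critical locus of the reduced $f$, and the matching of that part with the union of the $Z_{i,k_L,m-1,m-2}$. One must be careful that $f_\ell(x^{(\ell)})$ for $1\le\ell\le m-2$ does not merely test whether $x^{(0)}\in C_f$ but imposes further conditions on higher jet coordinates; the point is that on the open locus $\{x^{(0)}\notin C_f\}$ the equations $f_1=\dots=f_{m-2}=0$ can be solved for $(m-2)$ of the coordinates $x^{(1)},\dots,x^{(m-2)}$, so that stratum is an affine-space bundle over $\{x^{(0)}\notin C_f\}$, and then Lemma \ref{fibration}'s conclusion (trivial $\AA^1$-fibration via reparameterization of $t$) applies to show its contribution to $\H^*_c$ vanishes. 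Handling the possible non-reducedness or singularities of $C_f$ and of the jet schemes uniformly in $L$ forces one final enlargement of $M_m$, justified by model-theoretic compactness exactly as in Lemma \ref{critical values}. Once this geometric decomposition is in place, the weight bookkeeping via Deligne's theory and the long exact sequences from \cite[Sommes trig. 2.5*]{Deligneetalecoho} is routine.
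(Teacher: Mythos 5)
Your proposal is structurally the same as the paper's proof: decompose $Z^{(m-1)}_{k_L}$ according to the critical values via Lemma~\ref{critical values}, kill the cohomology over the smooth complement using Corollary~\ref{zero2} and Lemma~\ref{smooth}, and compare weights piece by piece on the $Z^{(m-1)}_{i,k_L}$. However, your claimed \emph{direct} proof of the first equality $w(b_0 f_{k_L,0}+b_{m-1}f_{k_L,m-1},Z^{(m-1)}_{k_L})=w(b_{m-1}f_{k_L,m-1},Z^{(m-1)}_{k_L})$ --- either via ``replacing the jet-coordinate shift,'' or via the alternative that the change of variable $t\mapsto t_1$ identifies the two Artin--Schreier sheaves up to an automorphism of $Z^{(m-1)}$ --- does not work. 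The reparameterization of Lemma~\ref{changechar}/Proposition~\ref{simplified expsum} removes only the intermediate terms $f_1,\dots,f_{m-2}$; the constant coefficient $f_0$ is unaffected by any substitution $t\mapsto t_1=t+\alpha_2t^2+\cdots$, and Proposition~\ref{simplified expsum} explicitly outputs a coefficient $b_0$ that is nonzero in general. So the two sheaves $\cL_{b_0f_0+b_{m-1}f_{m-1},\Psi}$ and $\cL_{b_{m-1}f_{m-1},\Psi}$ on $Z^{(m-1)}_{k_L}$ are genuinely different, and there is no a priori isomorphism of their compactly supported cohomology. Moreover, even if the Frobenius trace sums over all finite extensions agreed, this alone would not force the top weights to coincide, since cancellations across cohomological degrees are possible.

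The paper obtains the first equality only as a \emph{corollary} of the other three. It first establishes $w(b_0f_{k_L,0}+b_{m-1}f_{k_L,m-1},Z^{(m-1)}_{k_L}) = \max_{i\in I_L} w(b_0f_{k_L,0}+b_{m-1}f_{k_L,m-1},Z^{(m-1)}_{i,k_L})$ and the analogous identity for $b_{m-1}f_{m-1}$ alone, both coming from the vanishing of $\H_c^j(U^{(m-1)}\otimes_{k_L}\overline{k}_L,\cL_{f_{k_L},b,\Psi})$ on the smooth complement $U$. The decisive observation is then that $f_{k_L,0}$ is \emph{constant} on each $Z^{(m-1)}_{i,k_L}$ (equal to $\overline{\varphi(c_i)}$), so $\cL_{b_0f_0+b_{m-1}f_{m-1},\Psi}$ restricted there is a twist of $\cL_{b_{m-1}f_{m-1},\Psi}$ by a geometrically constant rank-one sheaf of weight zero; by the K\"unneth formula for $Z_{i,k_L}^{(m-1)}\times\Spec k_L$, the highest weight on each piece is unchanged. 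This is exactly what your phrase ``the first equality applied to each $f-c_i$'' unpacks to (a near-tautology, since $(f-c_i)_0\equiv 0$ on $Z_i$), so the latter half of your plan is sound. You should drop the flawed reparameterization argument for the first equality and instead present it, as the paper does, as a consequence of the decomposition together with the constant-twist observation.
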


\begin{proof}
By Lemma \ref{critical values}, we have that $f_{k_L}$ has no critical point in the complement $U$ of the closed subscheme  $\sqcup_{i\in I_L}Z_{i,k_L}$ in $Z_{k_L}$. By Corollary \ref{zero2} and Lemma \ref{smooth}, we have for all $m\geq 2$, all $L\in \cL'_{K,M_m}$,  all $j$, and all $b=(b_0,b_{m-1})\in k_L\times k_L^*$ that
$$\H_c^j(U^{(m-1)}\otimes_{k_L}\overline{k}_L, \cL_{f_{k_L},b,\Psi})=0,$$
 hence
$$\H_c^j(Z_{k_L}^{(m-1)}\otimes_{k_L}\overline{k}_L, \cL_{f_{k_L},b,\Psi})=\oplus_{i\in I_L}\H_c^j(Z_{i,k_L}^{(m-1)}\otimes_{k_L}\overline{k}_L, \cL_{f_{k_L},b,\Psi}),$$
and consequently
\begin{equation}\label{b0}
w(b_0f_{k_L,0}+b_{m-1}f_{k_L,m-1},Z_{k_L}^{(m-1)})=\max_{i\in I_L}w(b_0f_{k_L,0}+b_{m-1}f_{k_L,m-1},Z_{i,k_L}^{(m-1)})
\end{equation}
and
\begin{equation}\label{0b0}
w(b_{m-1}f_{k_L,m-1},Z_{k_L}^{(m-1)})=\max_{i\in I_L}w(b_{m-1}f_{k_L,m-1},Z_{i,k_L}^{(m-1)}).
\end{equation}
By the definition of $Z_i$ we have that $f_{k_L,0}$ is constant on $Z_{i,k_L}^{(m-1)}$ for all $i\in I_L$.  Using the K\"unneth formula for $Z_{i,k_L}^{(m-1)}\times \Spec(k_L)$, the function $b_{m-1} f_{k_L, m-1}$ on $Z_{k_L}$ and a constant function on $\Spec(k_L)$, we then obtain
\begin{equation}\label{cons}
w(b_0f_{k_L,0}+b_{m-1}f_{k_L,m-1},Z_{i,k_L}^{(m-1)})=w(b_{m-1}f_{k_L,m-1},Z_{i,k_L}^{(m-1)})
\end{equation}
for all $m\geq 2$,  all $L\in \cL'_{K,M_m}$, all $b=(b_0,b_{m-1})\in k_L\times k_L^*$, and all $i\in I_L$. Our claim follows by combining (\ref{b0}), (\ref{0b0}) and (\ref{cons}).
\end{proof}

\subsection{Transfer principle}

We show the following identification of exponential sums over local fields in $\cL_{K,M}\cup\cL'_{K,M}$ with the same residue field.

\begin{prop}[Transfer principle]\label{transfer} Let $f\in \overline{\QQ}[x_1,\dots,x_n]$ be a non-constant polynomial and $Z$ a subscheme of $\AA_{\overline{\QQ}}^n$ such that $f$ and $Z$ are defined over a number field $K$. We use Notation \ref{notation2}. There exists a number $M$, depending only on $f$, such that for all pairs of local fields $(L,\pi,\psi)$ and $(L',\pi',\psi')$ in  $\cL_{K,M}\cup\cL'_{K,M}$ over $\cO$ with $k_L\simeq k_{L'}$, and for all $z\in L$ and $z'\in L'$ with $\ord_{L}(z)=\ord_{L'}(z')\leq -1$ and $\ac(z)=\ac(z')$, we have
\begin{equation}\label{residuefield}
E_{f,L,Z,\psi_z}=E_{f,L',Z,\psi'_{z'}} .
\end{equation}
\end{prop}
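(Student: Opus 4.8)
The plan is to deduce the transfer principle for these exponential sums from the general transfer principle for constructible motivic functions of Cluckers--Loeser \cite{Cluckers-Loeser-ann}, applied to a suitably constructed motivic integrand. First I would note that, by Proposition \ref{zetaex}, over a local field of characteristic zero the exponential sum $E_{f,L,Z,\psi_z}$ is an explicit $\CC$-linear combination of coefficients (in the variable $q_L^{-s}$) of Igusa local zeta functions $Z(f,\chi,L,\Phi_{L,Z};s)$ with Gaussian sum factors. Hence it suffices to prove that each such zeta function, together with the relevant Gaussian sums, is the specialization of a single constructible motivic exponential function; the transfer principle then gives equality of the two specializations at local fields with isomorphic residue field. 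The key point, already recorded in Section \ref{section2}, is that after enlarging $M$ one may fix an embedded resolution $(Y,h)$ of $f^{-1}(0)$ defined over $K$ with tame good reduction modulo $\cM_L$ for all $L\in\cL_{K,M}\cup\cL'_{K,M}$ over $\cO$; in positive characteristic this makes sense as soon as $p_L$ is large, even though resolution of singularities is unavailable there, because the scheme $Y$ and the map $h$ are already given over $\cO$ and one only needs the good-reduction conditions, which are open. Denef's formula (Theorem \ref{goodreduction}) then expresses $Z(f,\chi,L,\Phi_{L,Z};s)$ uniformly in $L$ in terms of the numerical data $(N_i,\nu_i)$ and the point counts $c_{I,\Phi,\chi}$ on the strata $\overset{\circ}{\overline{E}}_I(k_L)$, and these point counts, being counts of $k_L$-points of fixed $\cO$-schemes twisted by the characters $\Omega_\chi$, are exactly the residue-field data that a motivic constructible exponential function records.

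The steps, in order, would be: (1) reduce, via Proposition \ref{zetaex} and Corollary \ref{invariant sigma}, the claimed identity \eqref{residuefield} to an identity between the finitely many relevant zeta functions (using Lemma \ref{zerofunction}(i) to bound the number of characters $\chi$ that contribute) together with matching of the Gaussian sums $g_\chi$; the latter depend only on $k_L$, $\pi$ normalization and $\ac$, hence agree under our hypotheses $k_L\simeq k_{L'}$, $\ord(z)=\ord(z')$, $\ac(z)=\ac(z')$. (2) Express each $Z(f,\chi,L,\Phi_{L,Z};s)$ via Denef's formula over a fixed good-reduction resolution defined over $\cO$, valid for $L$ in $\cL_{K,M}\cup\cL'_{K,M}$ once $M$ is large, exactly as was argued in the proof of Main Theorem \ref{mainthm1}; reduce to general $Z$ using the critical-value decomposition of Notation \ref{notation2} and Lemma \ref{critical values}, which guarantees the supports $\Supp(\Phi_{L,Z_i})$ are disjoint and the critical values behave well mod $\cM_L$. (3) Recognize the right-hand side of Denef's formula as the specialization at $L$ of one constructible motivic exponential function $\mathbf{Z}(f,\chi,Z;s)$ in the category of \cite{Cluckers-Loeser-ann}, the only $L$-dependence entering through $q_L$, the residue field, and the additive/multiplicative characters — i.e.\ precisely through the data on which the transfer principle acts. (4) Invoke the transfer principle: two constructible motivic exponential functions that specialize equally over $\cL_{K,M}$ (for $M$ large) also specialize equally over $\cL'_{K,M}$ at fields with the same residue field, and conversely. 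Assemble (1)--(4) to conclude \eqref{residuefield}.

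The main obstacle I anticipate is the bookkeeping needed to fit the \emph{exponential} sums, as opposed to the zeta functions, honestly into the motivic formalism: one must phrase $E_{f,L,Z,\psi_z}$ itself as the specialization of a motivic \emph{exponential} function (a function in $\Cexp$ in the notation of \cite{Cluckers-Loeser-ann}), tracking the additive character $\psi_z$ through the chain and matching conductors $m_\psi=-\ord(z)$, rather than going through the coefficient-extraction of Proposition \ref{zetaex} separately for each $\chi$. Doing it directly via the motivic integral $\int \Phi_{Z}\cdot E(f(x)/ \cdot)$ over $\VF^n$ is cleaner and avoids juggling Gaussian sums, but then one needs the uniform-in-$L$ good-reduction setup to know the specialization is the naive one in both characteristics; this is where the remark at the end of Section \ref{section2}, that for large residue characteristic the induced $f$ admits an embedded resolution with tame good reduction in positive characteristic too, is doing essential work. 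A secondary subtlety is making sure $M$ can be chosen depending only on $f$ (not on $z$ or the character), which follows because the resolution, its numerical data, the bound on contributing $\chi$ from Lemma \ref{zerofunction}(i), and the good-reduction locus are all fixed once $f$ and the ambient data $\cO, K$ are fixed.
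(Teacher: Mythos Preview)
Your route through zeta functions and Denef's formula is considerably more indirect than the paper's, and as written step~(4) has a gap. The paper avoids Proposition~\ref{zetaex}, Igusa zeta functions, and resolution of singularities entirely. Assuming first $f(Z(\CC))=0$, it invokes a vanishing lemma (adapted from \cite{Saskia-Kien}): there is $M$ depending only on $f$ such that for all $L\in\cL_{K,M}\cup\cL'_{K,M}$ and all $z$ with $\ord_L(z)=-m\leq-2$,
\[
\int_{\{x\in\cO_L^n\,:\,\overline{x}\in Z(k_L),\ \ord_L(f(x))\leq m-2\}}\psi_z(f(x))\,|dx|=0.
\]
On the complementary region $\ord_L(f(x))\geq m-1$, the value $\psi_z(f(x))$ depends only on $\ac(f(x))$ and $\ac(z)$, so
\[
E_{f,L,Z,\psi_z}=\sum_{\alpha\in k_L}F_L(\alpha,m)\,\Psi_{p_L}\bigl(\Tr_{k_L/\FF_{p_L}}(\alpha\cdot\ac(z))\bigr),
\]
where $F_L(\alpha,m)$ is the volume of an $\cO$-definable set. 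Thus $F$ is a \emph{constructible} motivic function on $\RF\times\VG$ (no exponential part needed), and the Cluckers--Loeser transfer principle gives $F_L=F_{L'}$ once $k_L\simeq k_{L'}$; the outer sum manifestly depends only on $k_L$ and $\ac(z)$. The general $Z$ case then reduces to this one via the critical-value decomposition, as you note.

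The gap in your version is that Proposition~\ref{zetaex} (and Lemma~\ref{zerofunction}(ii)) are stated only for local fields of characteristic zero. So your step~(1) only says that over $\cL_{K,M}$ the exponential sum equals the zeta-coefficient combination; it does not by itself give you the same identity over $\cL'_{K,M}$. To close this you must either reprove Proposition~\ref{zetaex} in positive characteristic, or set up \emph{both} the exponential sum and the zeta/Gaussian-sum combination as specializations of motivic exponential functions of $z\in\VF$, check they agree on $\cL_{K,M}$, and then transfer. That second route is viable but requires precisely the ``direct motivic integral'' you demote to an alternative, together with a verification that the Denef-formula side (with its sum over characters and Gaussian sums) is genuinely of class $\cC^{\exp}$. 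Even then, the conclusion $E_{L}(z)=E_{L'}(z')$ for $z\in L$, $z'\in L'$ in different fields only follows once you have shown the function descends from $\VF$ to $\VG\times\RF$; the paper's vanishing lemma accomplishes exactly that descent in one stroke, and without invoking any embedded resolution.
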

\begin{proof} The claim  is trivial if $\ord_L(z)=\ord_{L'}(z')=-1$, so we can suppose that $\ord_L(z)=\ord_{L'}(z')\leq -2$.
Firstly, we suppose that $f(Z(\CC))=0$. We remark that there exists a number $M$, depending only on $f$, such that for all $L$ in $\cL_{K,M}\cup\cL'_{K,M}$ over $\cO$, for all $z\in L$ with $\ord_{L}(z)\leq -2$ and all additive characters $\psi$ of conductor $0$, we have
\begin{equation}\label{m-2}
\int_{\{x\in\cO_L^n \mid \overline{x}\in Z(k_L), \ord_L(f(x))\leq -\ord_L(z)-2\}}\psi(zf(x))|dx|=0.
\end{equation}
We can prove this by adapting the argument from \cite[Lemma 3.2.1]{Saskia-Kien}. (A weaker version of (\ref{m-2}) with $M$ depending on $f,       \ord_L(z)$ and $L\in \cL'_{K,M}$  follows from Corollary \ref{zero2}.) Hence, for all $(L,\pi,\psi)\in\cL_{K,M}\cup\cL'_{K,M}$  over $\cO$ and for all $z\in L\setminus\cO_L$, we have
\begin{align*}
&\int_{\{x\in\cO_L^n|\overline{x}\in Z(k_L)\}}\psi(zf(x))|dx|\\
=&\int_{\{x\in\cO_L^n|\overline{x}\in Z(k_L), \ord_L(f(x))\geq-\ord_L(z)-1\}}   \psi(zf(x))|dx|\\
=&\sum_{\alpha\in k_L}F_L(\alpha,-\ord_L(z))\Psi_{p_L}\bigl(\Tr_{k_L/\FF_{p_L}}(\alpha\ac(z))\bigr) ,
\end{align*}
where the last line has the following meaning.
In the terminology of \cite{Cluckers-Loeser-Inv}, $F$ is a constructible motivic function on $\RF\times \VG$, where $\RF$ and $\VG$ stand for residue field variables and value group variables, respectively. More precisely,
$$F_L(\alpha,m)=\operatorname{Vol}\{x\in\cO_L^n \mid \overline{x}\in Z(k_L), \ord_L(f(x))=m-1, \ac(f(x))=\alpha\}$$
 if $\alpha\neq 0$ and
 $$F_L(0,m)=\operatorname{Vol}\{x\in\cO_L^n \mid \overline{x}\in Z(k_L), \ord_L(f(x))>m-1\}$$
for all $L\in \cL_{K,M}\cup\cL'_{K,M}$.
We can write  $F$ as a linear combination of basic constructible motivic functions on $\RF\times\VG$,  so the interpretation of constructible motivic functions on local fields $L\in\cL_{K,M}\cup\cL'_{K,M}$ for large enough $M$ (see \cite{Cluckers-Loeser-ann}) implies (\ref{residuefield}).

\medskip
For general $Z$, we can conclude from Lemma \ref{critical values} that there is a partition
\begin{equation}\label{decomposition}
\Supp(\Phi_{L,Z})=\left(\bigsqcup_{1\leq i\leq r, \varphi(\cO_i)\subset L}\Supp(\Phi_{L,Z_i})\right)\bigsqcup\Supp(\Phi_{L,Z_0})
\end{equation}
of $\Supp(\Phi_{L,Z})$, where $Z_0$ is a subscheme  of $Z$
such that $f \mod \cM_L$ has no critical point in $\overline{\Supp(\Phi_{L,Z_0})}$. Since moreover $\Phi_{L,Z_0}$ is residual, we derive that $E_{f,L,Z_0,\psi_z}=0$ for all $L\in\cL_{K,M}\cup\cL'_{K,M}$ and all $z\in L$ with $\ord_L(z)\leq -2$. So our claim follows by applying the above discussion for $(f-c_i, Z_i), 1\leq i\leq r,$ and using the fact that
$$E_{f,L,Z,\psi_z}=\sum_{1\leq i\leq r, \varphi(\cO_i)\subset L} \psi_z(\varphi(c_i)) \cdot  E_{f-c_i,L,Z_i,\psi_z}$$
for all  $L\in\cL_{K,M}\cup\cL'_{K,M}$ and all $z\in L$ with $\ord_L(z)\leq -2$.
\end{proof}

\begin{cor}\label{transfer1}  Let $f$ and $\cO$ be as in Notation \ref{notation2} and $K\supset \cO$ a number field.   Let $(W_i)_{i\in I}$ be a family of subschemes of $\AA_{\cO}^n$ and $(\sigma_i)_{i\in I}$  positive real numbers. Assume that $f(W_i(\CC))$ contains at most one critical value of $f$. Suppose that for each $m\geq 2$ there exist constants $M_m$ and $C_m$ such that for all local fields $L\in \cL'_{K,M_m}$,  for all additive characters $\psi$ of $L$ with $m_\psi=m$, and for all $i\in I$, we have
$$|E_{f,L,W_i,\psi}|\leq C_mq_L^{-m\sigma_i}.$$
Then there exist constants $M$ and $C$ such that for all $m\geq 2$, for all local fields $L\in \cL_{K,M}\cup\cL'_{K,M}$, for all additive characters $\psi$ of $L$ with $m_\psi=m$, and for all $i\in I$, we have
$$|E_{f,L,W_i,\psi}|\leq Cm^{n-1}q_L^{-m\sigma_i}.$$
\end{cor}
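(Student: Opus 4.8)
The plan is to go from positive-characteristic bounds to mixed-characteristic bounds in two stages: first transfer the estimate to characteristic zero via Proposition \ref{transfer}, then use Proposition \ref{zetaex} (Denef--Bourbaki) to convert a bound valid for all conductors $m_\psi = m$ into a bound whose $p$-independent shape absorbs the polynomial factor $m^{n-1}$. First I would fix $m \geq 2$ and note that, by the hypothesis for that value of $m$, every exponential sum $E_{f,L,W_i,\psi}$ with $L \in \cL'_{K,M_m}$ and $m_\psi = m$ is bounded by $C_m q_L^{-m\sigma_i}$; applying the transfer principle (Proposition \ref{transfer}) to the pair $(L,L')$ with $k_L \simeq k_{L'}$ shows the \emph{same} numerical bound holds for all $L \in \cL_{K,M_m} \cup \cL'_{K,M_m}$. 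Here I would need to check that the family $(W_i)$ satisfies the standing hypothesis that each $f(W_i(\CC))$ contains at most one critical value, so that the decomposition (\ref{decomposition}) in the proof of Proposition \ref{transfer} applies uniformly — this is exactly why that hypothesis is imposed.

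The remaining issue is that $M_m$ and $C_m$ a priori depend on $m$, whereas the conclusion asks for a single $M$ and a single $C$ (and a single polynomial factor $m^{n-1}$). To handle the small conductors, I would argue that for $m$ below any fixed bound there are only finitely many residue characteristics $p$ with $p < m$ plus finitely many $m$ to worry about, and in the remaining range $p > m$ one can invoke the uniform structure coming from a fixed embedded resolution with tame good reduction (as recalled at the end of Section \ref{section2}): Denef's formula (Theorem \ref{goodreduction}) expresses $Z(f-c_i,\chi,L,\Phi_{L,Z_i};s)$ with coefficients $c_{I,\Phi,\chi}$ that are, up to $q_L^{-n}$, point counts on varieties over $k_L$, hence bounded polynomially in $q_L$ with exponents controlled by the fixed resolution and independent of $L$. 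Feeding this into Proposition \ref{zetaex}, the coefficient extraction $\mbox{Coeff}_{t^{m-c(\chi)}}$ of a rational function in $t$ with denominator a product of at most $n$ factors $1 - q_L^{-N_i s - \nu_i}$ produces, for the poles of real part exactly $-\sigma_i$, a factor that is polynomial in $m$ of degree at most $n-1$ (the order of the pole minus one), times $q_L^{-m\sigma_i}$; the contributions from poles of strictly larger $-\Re(s)$ are negligible and those from smaller $-\Re(s)$ are dominated. Combining the finitely many small-$m$ cases with this uniform large-$m$ estimate yields constants $M$ and $C$ and the claimed bound $|E_{f,L,W_i,\psi}| \leq C m^{n-1} q_L^{-m\sigma_i}$.

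The main obstacle, I expect, is the uniformity in $i \in I$: the transfer principle as stated in Proposition \ref{transfer} is for a single fixed $Z$, and one must be careful that the constant $M$ produced there (which "depends only on $f$") does not secretly depend on $W_i$. The cleanest route is to observe that Proposition \ref{transfer}'s proof only uses properties of $f$ (the vanishing statement (\ref{m-2}), Lemma \ref{critical values}, and the interpretation of a constructible motivic function built from $f$), so the relevant $M$ is genuinely uniform over all subschemes $W_i$ whose image contains at most one critical value; the bound $C_m q_L^{-m\sigma_i}$ then transfers with $i$-independent $M$ and the stated $C_m$. The secondary technical point is making the degree-$(n-1)$ polynomial growth in $m$ precise from the partial-fraction expansion in Proposition \ref{zetaex}; this is a routine but slightly fiddly computation about coefficients of $1/(1-q_L^{-Ns-\nu})^k$ with $k \leq n$, and I would only sketch it, noting that the order of a pole of an Igusa zeta function in $n$ variables is at most $n$.
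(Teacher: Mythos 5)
Your overall architecture — transfer to characteristic zero via Proposition \ref{transfer}, then use the rational structure of Igusa zeta functions (Denef's formula plus Proposition \ref{zetaex}) to trade the $m$-dependent constant $C_m$ for a single constant times a polynomial factor $m^{n-1}$ — is the right one, and your observation that the one-critical-value hypothesis on $f(W_i(\CC))$ is exactly what makes the transfer uniform over $i$ is a good catch. The paper itself disposes of the corollary in one line, citing \cite[Claim 3.2.7]{Saskia-Kien} together with Proposition \ref{transfer}; since you cannot see that claim, you are forced to reconstruct it, and the reconstruction is where the trouble lies.

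The gap is in the partial-fraction step. You write that, extracting $\mbox{Coeff}_{t^{m-c(\chi)}}$, the pole at real part $-\sigma_i$ yields a factor $m^{n-1}q_L^{-m\sigma_i}$, that poles with larger $-\Re(s)$ are negligible, and that ``those from smaller $-\Re(s)$ are dominated.'' Two problems. First, $-\sigma_i$ need not be a pole of any of the zeta functions: the $\sigma_i$ in the statement are arbitrary positive reals, not tied to the resolution data. Second, and more seriously, a pole with $-\Re(s)<\sigma_i$ (i.e.\ $\sigma_J<\sigma_i$) contributes a term of size roughly $m^{k-1}q_L^{-m\sigma_J}$, which \emph{dominates} $q_L^{-m\sigma_i}$, not the other way around. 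Such a pole, if it contributes with a nonzero coefficient, would destroy the desired bound outright. The heart of the argument — and almost certainly the content of \cite[Claim 3.2.7]{Saskia-Kien} — is to show that the hypothesis (the bound $C_m q_L^{-m\sigma_i}$ for each fixed $m$, uniform as $q_L\to\infty$ within $\cL'_{K,M_m}$) forces the coefficients attached to any pole with real part $>-\sigma_i$ to vanish once $q_L$ is large, via a careful comparison of the $q_L$-growth of the Denef coefficients $c_{I,\Phi,\chi}$ (bounded by Lang--Weil) against the $q_L^{-m\sigma_i}$ decay, for a bounded number of values of $m$ (bounded because the $P_J$ are polynomials of degree $<n$). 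You need to carry out this elimination before the polynomial-in-$m$ estimate is meaningful; as written, your ``dominated'' silently assumes the conclusion.

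A secondary point: you propose to handle small $m$ by a finiteness argument, but for a fixed small $m$ the set of local fields $L$ with $p_L\geq M_m$ is infinite, so there is nothing finite to enumerate; what actually makes small $m$ harmless is again the rational structure, once the pole elimination above is in place. So the small/large split you describe is not doing the work you want it to.
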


\begin{proof}
The proof follows by adapting \cite[Claim 3.2.7]{Saskia-Kien}, in combination with Proposition \ref{transfer}.
\end{proof}

\begin{remark}
In Corollary \ref{transfer1}, the condition that $f(W_i(\CC))$ contains at most one critical value of $f$ may not be necessary, but to prove it more work is needed, combining \cite[Claim 3.2.7]{Saskia-Kien} and \cite[Proposition 2.7]{Denef-Veys}. However, almost every  problem can be reduced to Corollary \ref{transfer1} by using Proposition \ref{transfer}, therefore we do not explore this here.
\end{remark}

\subsection{A weight conjecture and applications}

Motivated by Conjecture \ref{Conjexp} and Proposition \ref{transfer}, we propose the following conjecture.  In fact, we will prove below that it implies
 Conjecture \ref{Conjexp} and Conjecture \ref{Conj1}.

\begin{conj}\label{highest}  Let $f$, $Z$ and $\cO$ be as in Notation \ref{notation2} and $K\supset \cO$ a number field.  Then for each $m\geq 2$ there exists  a number $M$,  depending on $m$, such that for all local fields $L\in\cL'_{K,M}$, we have that $w(f_{k_L},b,Z_{k_L}^{(m-1)}) = w(b_0f_{k_L,0}+b_{m-1}f_{k_L,m-1},Z_{k_L}^{(m-1)})\leq 2(mn-m \moi_K(f,Z))$ for all $(b_0, b_{m-1})\in k_L \times k_L^*$, where $Z_{k_L}=Z\otimes_{\cO} k_L$.
\end{conj}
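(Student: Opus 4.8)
The conjectured inequality is a strong, cohomological form of the Igusa-type estimate of Conjecture \ref{Conjexp}: the trace formula expresses $E_{f,k((t)),Z,\psi_z}$ as $\#(k)^{-mn}\sum_i(-1)^i\Tr(\Frob_{q_L}^r,\H^i_c(Z^{(m-1)}\otimes\overline{\FF}_p,\cL_{f,b,\Psi}))$, and Corollary \ref{zero2} bounds the number and dimensions of the contributing groups polynomially in $m$, so $w(f_{k_L},b,Z^{(m-1)}_{k_L})\leq 2(mn-m\,\moi_K(f,Z))$ immediately yields $|E_{f,k((t)),Z,\psi_z}|\leq C\,q_L^{-m\,\moi_K(f,Z)}$ uniformly over all finite residue extensions; via the transfer principle (Proposition \ref{transfer} and Corollary \ref{transfer1}) together with Corollary \ref{invariant sigma} this is exactly Conjecture \ref{Conjexp}, and it also forces Conjecture \ref{Conj1}, since the right-hand side $2(mn-m\,\moi_K(f,Z))$ is insensitive to enlarging $K$ only if $\moi$ itself is. So the plan is to establish this stronger statement directly on the jet-scheme side.

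First I would reduce to a single critical value. By Proposition \ref{reduction} one has $w(f_{k_L},b,Z^{(m-1)}_{k_L})=\max_{i\in I_L}w(b_{m-1}f_{k_L,m-1},Z^{(m-1)}_{i,k_L})$, the $b_0f_0$-summand being constant on each $Z^{(m-1)}_i$ and hence immaterial by K\"unneth, while $\moi_K(f,Z)=\min_i\moi_{K(c_i)}(f-c_i,Z_i)$ by definition; thus one may assume $Z(\CC)\subset f^{-1}(0)$. Next, using Corollary \ref{zero2} to replace $Z^{(m-1)}_{k_L}$ by $Z_{m-1,m-2}$, I would try to describe the cohomology of $\cL_{f,b,\Psi}$ on $Z_{m-1,m-2}$ through a stratification of $Z_{k_L}$ pulled back from an embedded resolution $(Y,h)$ of $f^{-1}(0)$ over $K$ (which, for $L$ of large residue characteristic, has tame good reduction, so that $\moi_K(f,Z)=\min\{\nu_i/N_i\}$), bounding the weight stratum by stratum via the trivial-fibration statements of Lemmas \ref{fibration} and \ref{smooth} in terms of the numerical data $(N_i,\nu_i)$; this is the route that, in favourable cases, matches the resolution bound coming from Denef's formula and Corollary \ref{invariant sigma}.

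The main obstacle will be exactly this last step, which is also why the full statement is only conjectural: resolution of singularities is unavailable in positive characteristic, so the needed stratification of $Z_{k_L}$ does not descend from a resolution, and even after transferring to a situation with a tame-good-reduction resolution one still needs a genuinely geometric input — that the Artin--Schreier cohomology on the exceptional jet-strata carries no weight beyond what the $(N_i,\nu_i)$ predict, and that no degree-to-degree cancellation can hide such a weight (the obstruction already visible in the discussion of pure versus impure sums) — neither of which is known for arbitrary singularities. The case where the plan can be completed is the Thom--Sebastiani one, $f=\sum_{i=1}^r g_i$ in disjoint variable blocks with every $g_i$ having non-rational singularities at $Q_i$: there $\moi_K(f,Z)=\lct$, the jet scheme $Z^{(m-1)}_f$ splits as a product over the blocks so that the K\"unneth formula adds up the per-block weight contributions, and the theorem of \cite{CMN} supplies the matching exponential-sum bound for each non-rational $g_i$, which — after the transfer principle — suffices to pin down the weight. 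I would therefore organise the write-up as: (i) the general reductions above; (ii) the K\"unneth splitting in the Thom--Sebastiani situation; (iii) the appeal to \cite{CMN} and the transfer principle; flagging explicitly that steps (ii)--(iii) do not extend beyond the non-rational case, which is why Conjecture \ref{highest} is stated only as a conjecture.
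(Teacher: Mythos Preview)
The statement is a conjecture, so there is no full proof in the paper to compare against; what the paper does prove is the special case of non-rational singularities (Theorem \ref{nonrational}) and the Thom--Sebastiani extension (Proposition \ref{Thom-Sebastiani}, Corollary \ref{Th-S}). Your proposal correctly identifies that the conjecture implies Conjectures \ref{Conjexp} and \ref{Conj1}, correctly performs the reduction to a single critical value via Proposition \ref{reduction}, and correctly notes that the K\"unneth formula splits the weight additively in the Thom--Sebastiani situation. Your recognition that the general case is blocked, and why, is also accurate.

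There is however a genuine gap in your step (iii). You write that the exponential-sum bound of \cite{CMN} for a non-rational $g_i$, ``after the transfer principle, suffices to pin down the weight''. This implication is exactly the nontrivial part, and your earlier parenthetical about cancellation shows you are aware of the issue without resolving it: an upper bound on $|E_{f,k((t)),Z,\psi_z}|$ uniform in $k$ does not by itself bound $w(f_{k_L},b,Z^{(m-1)}_{k_L})$, because eigenvalues of high weight in different cohomological degrees could cancel in the alternating trace sum. The paper's proof of Theorem \ref{nonrational} overcomes this with three ingredients you do not mention: first, Musta\c{t}\u{a}'s formula $\dim Z_{m-1,m-2}\le mn-(m-1)\lct_Z(f)$, which via Corollary \ref{zero2} kills $\H^i_c$ for $i>2(mn-(m-1)\lct_Z(f))$; second, for the borderline degree, a Poincar\'e-duality argument showing that nonvanishing of the top $\H^{2\dim}_c$ forces the trace function $b_0f_0+b_{m-1}f_{m-1}$ to be constant on a dense open subset of $Z_{m-1,m-2}$; and third, the Bombieri--Katz lower bound \cite{Bombieri-Katz}, which then produces infinitely many finite extensions $L'$ over which $|E_{f,L',Z,\psi}|$ is at least of order $q_{L'}^{-(m-1)\lct_Z(f)-\epsilon}$, contradicting the \cite{CMN} upper bound $q_{L'}^{-m\lct_Z(f)}$. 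A further iteration handles the half-integer shift. Without this contradiction mechanism your plan does not close, and the stratification-from-resolution route you sketch in the second paragraph is not pursued in the paper and would face the same cancellation obstruction even where a resolution with tame good reduction is available.
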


\begin{remark}
We may think about a number $M$ that does not depend  on $m$. But this is not necessary for our purposes, as we will see below, and it could be very hard if $M<m$.
\end{remark}

\begin{remark}
The motivic oscillation index is a quite abstract notion, and for the moment we do not have a useful tool to compute it in general.
For arithmetic applications, it is sometimes good enough to prove an analogue of Conjecture \ref{highest}, with $\moi_K(f,Z)$ replaced by some \lq more accessible\rq\ other invariant that is a lower bound for $\moi_K(f,Z)$. Hence, such analogues are also interesting to study. In fact, we will show the variant with the log canonical threshold.

Recall that, when $f(Z(\CC))=0$, the {\em minimal exponent $\tilde{\alpha}_{Z}(f)$ of $f$ at $Z$} is the smallest number among all values $\tilde{\alpha}_{P}(f)$ with $P\in Z(\CC)$, where $\tilde{\alpha}_{P}(f)$ is the minimal exponent of $f$ at $P$, i.e., $-\tilde{\alpha}_{P}(f)$ is the largest non-trivial root of the Bernstein-Sato polynomial of $f$ at $P$.
A famous conjecture (sometimes called the strong monodromy conjecture) predicts essentially that poles of Igusa zeta functions of $f$ are always roots of the Bernstein-Sato polynomial of $f$.
Therefore, $\tilde{\alpha}_{Z}(f)$ is a \lq predicted\rq\ lower bound for $\moi_K(f,Z)$.

A conceptually appealing hope, when $f(Z(\CC))=0$, is that moreover $\tilde{\alpha}_{Z}(f)= \moi_K(f,Z)$, which then would automatically yield Conjecture \ref{Conj1} (since the Bernstein-Sato polynomial is invariant under base field extension).
\end{remark}

Firstly, we check Conjecture \ref{highest} when $f$ is smooth at $Z_\CC$. In this case $\moi_K(f,Z)=+\infty$, so we need to show that there exists a value $M$ such that $w(f_{k_L},b,Z_{k_L}^{(m-1)}) = w( b_0 f_{k_L,0} + b_{m-1}f_{k_L,m-1},Z_{k_L}^{(m-1)})=-\infty$ for all  $L\in\cL'_{K,M}$, and for all $(b_0, b_{m-1})\in k_L \times k_L^*$. If $f$ is smooth at $Z_\CC$, then by Robinson's principle or by \cite[Corollary 2.2.10]{Marker}, there exists $M$ such that $f_{k_L}$ is smooth at $Z_{k_L}$ for all $L\in \cL'_{K,M}$. We conclude by using the same idea as in the proof of Corollary \ref{zero2} and Lemma \ref{smooth}.

\begin{prop}\label{Implication} If Conjecture \ref{highest} holds for $(f, Z)$, then

(1)  Conjecture \ref{Conjexp} holds for $(f, Z)$,

(2) Conjecture \ref{Conj1} holds for $(f, Z)$.
\end{prop}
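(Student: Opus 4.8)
The plan is to deduce both parts from the transfer principle (Proposition \ref{transfer}) together with the cohomological description of the positive-characteristic exponential sums and the weight bound hypothesized in Conjecture \ref{highest}. The core observation is that, in characteristic $p>m$, the exponential sum $E_{f,k_L((t)),Z,\psi_z}$ over a local field $L=k_L((t))\in\cL'_{K,M}$ with $m_{\psi_z}=m$ can be written, via Proposition \ref{simplified expsum}, as an alternating sum of traces of $\Frob_q$ on the cohomology groups $\H^i_c(Z^{(m-1)}\otimes\overline{\FF}_p,\cL_{f_{k_L},b,\Psi})$ for a suitable $b=(b_0,b_{m-1})$. By Corollary \ref{zero2}, these groups vanish outside the range $i\le 2\dim(Z_{m-1,m-2})$, and the total number of nonvanishing terms is bounded polynomially in $m$ (in fact $O(m^{n-1})$, as is implicit in the fibration arguments of Section \ref{section4}). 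Each surviving eigenvalue of $\Frob_q$ has $\iota$-absolute value $q_L^{w/2}$ with $w\le w(f_{k_L},b,Z_{k_L}^{(m-1)})$. Under Conjecture \ref{highest}, $w\le 2(mn-m\,\moi_K(f,Z))$, so $|\iota(\text{eigenvalue})|\le q_L^{mn-m\,\moi_K(f,Z)}$. After dividing by the normalizing factor $\#(k_L)^{-mn}=q_L^{-mn}$ from Definition \ref{finitexp}, we obtain $|E_{f,k_L((t)),Z,\psi_z}|\le C\,m^{n-1}q_L^{-m\,\moi_K(f,Z)}$ for all $L\in\cL'_{K,M_m}$.

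For part (1), I would first pass to the case $f(Z(\CC))=0$ by the standard reduction using the critical values $c_i$ and Lemma \ref{critical values}(iii),(iv): by Notation \ref{notation2}(2) and the decomposition $\Supp(\Phi_{L,Z})=\bigsqcup_i\Supp(\Phi_{L,Z_i})\sqcup\Supp(\Phi_{L,Z_0})$ used in the proof of Proposition \ref{transfer}, it suffices to treat each $(f-c_i,Z_i)$, where $Z_i\subset\{f=c_i\}$. For such a pair the bound above holds in $\cL'_{K,M_m}$; applying Corollary \ref{transfer1} with the family $(W_i)_{i}$ being the single scheme $Z$ (or the $Z_i$) and constant exponent $\sigma=\moi_K(f,Z)$ then transfers the estimate to all $L\in\cL_{K,M}\cup\cL'_{K,M}$, yielding $|E_{f,L,Z,\psi}|\le C\,m_\psi^{n-1}q_L^{-m_\psi\sigma}$ for any fixed $\sigma<\moi_K(f,Z)$; absorbing the polynomial factor $m_\psi^{n-1}$ into $q_L^{-m_\psi\epsilon}$ for small $\epsilon$ (using $m_\psi\ge 2$ and $q_L\ge M$) gives exactly the form demanded in Conjecture \ref{Conjexp}.

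For part (2), the point is that the right-hand side of the inequality in Conjecture \ref{highest}, namely $2(mn-m\,\moi_K(f,Z))$, involves $\moi_K(f,Z)$ but the left-hand side $w(f_{k_L},b,Z_{k_L}^{(m-1)})$ depends only on $f$, $Z$ and $k_L$, not on the chosen number field. Suppose $K'\supset K$ is a finite extension. Then $\moi_{K'}(f,Z)\ge\moi_K(f,Z)$ is immediate from the definition (more poles are allowed over $K$, as the class of local fields is larger), so we only need $\moi_{K'}(f,Z)\le\moi_K(f,Z)$, equivalently an upper bound on $\moi_{K'}(f,Z)$. By the relation between $\moi$ and the real parts of non-trivial poles of Igusa zeta functions (Corollary \ref{invariant sigma} and the definition in Section \ref{section2}), together with Proposition \ref{zetaex} relating poles to exponential-sum estimates, an inequality $|E_{f,L,Z,\psi}|\le C m_\psi^{n-1}q_L^{-m_\psi\sigma}$ holding for all $L\in\cL_{K',M}$ forces every non-trivial pole to have real part $\le-\sigma$, i.e.\ $\moi_{K'}(f,Z)\ge\sigma$; but now run the argument of part (1) with $K'$ in place of $K$: Conjecture \ref{highest} for $(f,Z)$ over $K$ gives the weight bound $w\le 2(mn-m\,\moi_K(f,Z))$, which is a fortiori a bound with $\moi_K$ in place of $\moi_{K'}$, hence the exponential-sum estimate with exponent $\moi_K(f,Z)$ holds over $\cL'_{K',M_m}\subset\cL'_{K,M_m}$ and, by Corollary \ref{transfer1}, over $\cL_{K',M}$; therefore $\moi_{K'}(f,Z)\ge\moi_K(f,Z)$, and equality follows.

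The main obstacle I anticipate is bookkeeping rather than conceptual: one must be careful that the number $M$ coming from Conjecture \ref{highest} depends on $m$, while the $M$ needed for the transfer principle (Proposition \ref{transfer}) and for tame good reduction is uniform in $m$; reconciling these is exactly what Corollary \ref{transfer1} is designed for, since it upgrades a family of $m$-dependent estimates over $\cL'_{K,M_m}$ to a single uniform estimate over $\cL_{K,M}\cup\cL'_{K,M}$ with only a polynomial loss $m^{n-1}$. A secondary technical point is verifying that the hypothesis of Corollary \ref{transfer1} — that $f(W_i(\CC))$ contains at most one critical value — is met, which is why the reduction to the pairs $(f-c_i,Z_i)$ with $Z_i\subset\{f=c_i\}$ is carried out first; on each such piece $f-c_i$ vanishes on $Z_i(\CC)$, so the condition holds trivially. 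With these uniformity issues handled, the two implications are formal consequences of the machinery already assembled in Sections \ref{section2}--\ref{section5}.
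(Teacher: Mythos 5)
Your proof of part (1) is essentially the paper's argument: reduce to $f(Z(\CC))=0$, use Proposition~\ref{simplified expsum} and the Grothendieck trace formula together with the weight bound of Conjecture~\ref{highest} to obtain, for each $m$, an estimate $|E_{f,L,Z,\psi_z}|\leq C_m\,q_L^{-m\moi_K(f,Z)}$ for $L\in\cL'_{K,M_m}$ (the paper gets $C_m$ from Katz's bound on the sum of Betti numbers, rather than from a count of nonvanishing terms, but this is immaterial), then invoke Corollary~\ref{transfer1} to pass to $\cL_{K,M}\cup\cL'_{K,M}$ with only a polynomial loss $m_\psi^{n-1}$, and finally trade $m_\psi^{n-1}$ for $q_L^{-m_\psi\epsilon}$. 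That part is fine.

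Part (2) has a genuine logical gap: the direction of your deduction is wrong, and you end up proving the trivial inequality twice. You correctly note at the outset that $\moi_{K'}(f,Z)\geq\moi_K(f,Z)$ is automatic (since $\cL_{K',M}\subset\cL_{K,M}$) and that the nontrivial direction is $\moi_{K'}(f,Z)\leq\moi_K(f,Z)$. But you then apply Conjecture~\ref{highest} over $K$, obtaining the weight bound $w\leq 2(mn - m\,\moi_K(f,Z))$ over $\cL'_{K',M_m}\subset\cL'_{K,M_m}$, and from the resulting exponential-sum estimate over $\cL_{K',M}$ you conclude $\moi_{K'}(f,Z)\geq\moi_K(f,Z)$ --- which is again only the trivial direction, and the final phrase \lq\lq and equality follows\rq\rq\ is a non sequitur. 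The correct route (as in the paper) is the \emph{opposite} pairing of conjecture and field: apply Conjecture~\ref{highest} over $K'$ to get $w\leq 2(mn - m\,\moi_{K'}(f,Z))$ for $L\in\cL'_{K',M_m}$; then use the fact that the weight $w(f_{k_L},b,Z_{k_L}^{(m-1)})$ is unchanged under a finite extension of the residue field (so the same bound holds for every $L\in\cL'_{K,M_m}$, after passing to a finite extension $L'$ of $L$ carrying an $\cO_{K'}$-structure); finally run the part~(1) argument over $K$, now with exponent $\moi_{K'}(f,Z)$, to deduce $\moi_K(f,Z)\geq\moi_{K'}(f,Z)$. You observed that $w$ does not depend on the choice of number field, but you did not use this observation to transport the $\moi_{K'}$-bound from $\cL'_{K',M}$ up to the larger class $\cL'_{K,M}$, which is exactly the step that makes the nontrivial inequality come out.
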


\begin{proof}    By Proposition \ref{reduction}, the proof of Proposition \ref{transfer} and the definition of $\moi_K(f,Z)$, it suffices to prove our proposition in case $f(Z(\CC))=0$.

(1)  Using \cite{Katz}, for each $m\geq 2$, there exist a number $M_m$ and a constant $B_m$ (the Bombieri constant), both depending on $m$, $\deg(f)$ and the complexity of $Z$, such that for all local fields $L\in\cL'_{K,M_m}$ we have
$$\sum_{i}\dim_{\overline{\QQ}_\ell}\H^i_c(Z_{k_L}^{(m-1)}\otimes_{\FF_q}\overline{\FF}_p,\cL_{f_{k_L}, b, \Psi})\leq B_m.$$
So for each $m\geq 2$, there exists a number $M_m$ such that, for all local fields $(L,\pi,\psi)\in\cL'_{K,M_m}$ and for all $z\in L$ with $\ord_z=-m$, we have
$$|E_{f,L,Z,\psi_z}|\leq B_{m}q_L^{-m\moi_K(f,Z)}.$$
By Corollary \ref{transfer1}, there exist  constants $C,M$ such that, for all local fields $(L,\pi,\psi)\in\cL_{K,M}\cup\cL'_{K,M}$ and  for all $z\in L$ with $\ord_L(z)\leq -2$, we have
$$|E_{f,L,Z,\psi_z}|\leq C|\ord_L(z)|^{n-1}q_L^{\ord_L(z)\moi_K(f,Z)}.$$

(2) If $w(b_0f_{k_L,0}+b_{m-1}f_{k_L,m-1},Z_{k_L}^{(m-1)})\leq 2(mn-m\sigma)$ for all $m\geq 2$,  all $L\in\cL'_{K,M_m}$, and all $(b_0,b_{m-1})\in k_L\times k_L^*$, then by the above argument and the definition of $\moi_K(f,Z)$ we have that $\moi_K(f,Z)\geq \sigma$. For a finite extension  $K'$ of $K$, since the weights are stable by  field extension, Conjecture \ref{highest} implies that
$$w(b_0 f_{k_L,0}+b_{m-1}f_{k_L,m-1},Z_{k_L}^{(m-1)})\leq 2(mn-m\moi_{K'}(f,Z))$$
for all $m\geq 2$, all $L\in\cL'_{K,M}$, and all $(b_0,b_{m-1})\in k_L\times k_L^*$ . So we have $\moi_K(f,Z)\geq\moi_{K'}(f,Z)$. However, $\moi_K(f,Z)\leq\moi_{K'}(f,Z)$ by definition, and thus $\moi_K(f,Z)=\moi_{K'}(f,Z)$.
\end{proof}

\begin{cor}\label{uniformZ}  Let $f$, and $\cO$ be as in Notation \ref{notation2} and $K\supset \cO$ a number field. Let $(W_i)_{i\in I}$ be a family of subschemes of $\AA_{\cO}^n$ with bounded complexity. Suppose that for each $m\geq 2$ there exists a constant $M_m$ such that Conjecture \ref{highest} holds  for all $L\in\cL'_{K,M_m}$, all $b\in k_L\times k_L^*$ and all $W_i, i\in I$. Then there exist  constants $M, C$ such that for all local fields $L\in \cL'_{K,M}\cup \cL_{K,M}$,  all additive characters $\psi$ of $L$ with $m_\psi\geq 2$, and all $i\in I$, we have
$$|E_{f,L,W_i,\psi}|\leq Cm_\psi^{n-1}q_L^{-m_\psi\moi_K(f,W_i)}.$$
\end{cor}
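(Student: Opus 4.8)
The plan is to run a family version of the proof of Proposition \ref{Implication}(1), using the bounded complexity of $(W_i)_{i\in I}$ to make every auxiliary constant uniform in $i$. First I would reduce to the case where $f$ vanishes on the subscheme in play, exactly as in the proof of Proposition \ref{transfer}: writing $c_1,\dots,c_r$ for the critical values of $f$, $\cO_j=\cO[c_j]$, and $W_{i,j}$ for the intersection of $W_i\otimes_\cO\cO_j$ with $\{f=c_j\}$ in $\AA^n_{\cO_j}$, Lemma \ref{critical values} provides a constant $M_0$, depending only on $f$, such that for all $L\in\cL_{K,M_0}\cup\cL'_{K,M_0}$ and all additive characters $\psi$ with $m_\psi\ge2$,
\[E_{f,L,W_i,\psi}=\sum_{1\le j\le r,\ \varphi(\cO_j)\subset L}\psi(\varphi(c_j))\,E_{f-c_j,L,W_{i,j},\psi},\]
the residual part of $\Supp(\Phi_{L,W_i})$, on which $f\bmod\cM_L$ has no critical point, contributing $0$. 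The key observations are that $(W_{i,j})_{i\in I,\,1\le j\le r}$ still has bounded complexity (controlled by the complexity of the $W_i$ and $\deg f$), that $(f-c_j)(W_{i,j}(\CC))=\{0\}$, and that $\moi_{K(c_j)}(f-c_j,W_{i,j})\ge\moi_K(f,W_i)$ for every $j$ by the definition of $\moi_K(f,W_i)$; hence it suffices to bound $|E_{f-c_j,L,W_{i,j},\psi}|$, uniformly in $i$, by a constant times $m_\psi^{n-1}q_L^{-m_\psi\moi_K(f,W_i)}$.

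For the positive-characteristic estimate I would fix $m\ge2$ and combine three ingredients on the jet scheme $(W_{i,j})^{(m-1)}$. Conjecture \ref{highest}, assumed for each $(f,W_i)$, together with Proposition \ref{reduction} (whose decomposition of $W_i$ is precisely the critical-value decomposition above), yields a constant $M_m$ with
\[w\bigl(b_{m-1}f_{k_L,m-1},(W_{i,j})_{k_L}^{(m-1)}\bigr)\le w\bigl(b_0f_{k_L,0}+b_{m-1}f_{k_L,m-1},(W_i)_{k_L}^{(m-1)}\bigr)\le 2\bigl(mn-m\moi_K(f,W_i)\bigr)\]
for all $L\in\cL'_{K,M_m}$ and $(b_0,b_{m-1})\in k_L\times k_L^*$; here I use that $f_{k_L,0}$ equals the constant $\overline{\varphi(c_j)}$ on $(W_{i,j})_{k_L}^{(m-1)}$, so that the Artin--Schreier sheaf $\cL_{(f-c_j)_{k_L},b,\Psi}$ governing $E_{f-c_j,L,W_{i,j},\psi_z}$ is $(b_{m-1}f_{k_L,m-1})^*\cL_\Psi$, together with the weight identity (\ref{cons}). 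Next, \cite{Katz} supplies (after enlarging $M_m$) a Bombieri constant $B_m$, depending only on $m,n,\deg f$ and the complexity bound---hence uniform over the entire family---bounding $\sum_\nu\dim_{\overline{\QQ}_\ell}\H^\nu_c\bigl((W_{i,j})_{k_L}^{(m-1)}\otimes_{\FF_q}\overline{\FF}_p,\cL_{(f-c_j)_{k_L},b,\Psi}\bigr)$. Finally, the Grothendieck trace formula with the normalisation $\#(k_L)^{-mn}$ from Definition \ref{finitexp} turns these into $|E_{f-c_j,L,W_{i,j},\psi_z}|\le B_m\,q_L^{-m\moi_K(f,W_i)}$ for all $L\in\cL'_{K,M_m}$ with $\ord_L(z)=-m$ and all $i\in I$, exactly as in the proof of Proposition \ref{Implication}(1).

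Then I would transfer to characteristic zero and introduce the polynomial factor $m_\psi^{n-1}$ by applying Corollary \ref{transfer1} once for each of the finitely many $j$: to the polynomial $f-c_j$ over $\cO_j$, the number field $K(c_j)$, the bounded-complexity family $(W_{i,j})_{i\in I}$, and the positive reals $\sigma_i=\moi_K(f,W_i)$ (the convention $q_L^{-\infty}=0$ absorbing the harmless case $\moi_K(f,W_i)=+\infty$, in which the integral vanishes for $L$ large). Its single-critical-value hypothesis holds since $(f-c_j)(W_{i,j}(\CC))=\{0\}$, and its input over $\cL'_{K(c_j),M_m}$ is the bound just proved. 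This produces constants $M_{(j)},C_{(j)}$ with $|E_{f-c_j,L,W_{i,j},\psi}|\le C_{(j)}m_\psi^{n-1}q_L^{-m_\psi\moi_K(f,W_i)}$ for all $L\in\cL_{K(c_j),M_{(j)}}\cup\cL'_{K(c_j),M_{(j)}}$; setting $M=\max(M_0,M_{(1)},\dots,M_{(r)})$ and $C=r\max_jC_{(j)}$ and substituting into the decomposition above finishes the proof.

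I expect the only genuinely delicate point---everything else being bookkeeping---to be that Conjecture \ref{highest} is hypothesised only for the $W_i$, whereas the exponential sums that survive the critical-value decomposition live on the smaller pieces $W_{i,j}$. Proposition \ref{reduction} is exactly what makes this go through: it identifies the weight on $(W_i)^{(m-1)}$ with the maximum of the weights on the $(W_{i,j})^{(m-1)}$, so each piece inherits the bound $2(mn-m\moi_K(f,W_i))$---weaker than the piecewise-optimal exponent, but precisely what the statement asks for---and the constancy of $f_{k_L,0}$ on each piece, via (\ref{cons}), lets one replace $f$ by $f-c_j$ without altering the relevant weight. The remaining care, namely tracking $\cO$ versus $\cO_j$ and $K$ versus $K(c_j)$ through the transfer step, is handled by Lemma \ref{critical values} once $M$ is large enough.
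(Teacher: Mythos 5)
Your proposal is correct and follows exactly the route the paper intends: the paper's proof of Corollary \ref{uniformZ} is the one-line remark \lq\lq This follows by Corollary \ref{transfer1} and the proof of Proposition \ref{Implication},\rq\rq\ and what you have written is a careful unfolding of that sentence. You reduce to $f(W_{i,j}(\CC))=\{0\}$ via the critical-value decomposition from the proof of Proposition \ref{transfer}, invoke Proposition \ref{reduction} and the assumed Conjecture \ref{highest} to get the weight bound $2(mn-m\moi_K(f,W_i))$ on each piece $(W_{i,j})^{(m-1)}$, combine it with the Bombieri constant from \cite{Katz} (which is uniform in $i$ precisely because the complexity of the $W_{i,j}$ is bounded) and the trace formula to get the positive-characteristic estimate, and then feed this into Corollary \ref{transfer1} for each of the finitely many $j$ to obtain the $Cm_\psi^{n-1}$ factor over both $\cL_{K,M}$ and $\cL'_{K,M}$. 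The one point where you go slightly beyond what Corollary \ref{transfer1} formally states is the case $\moi_K(f,W_i)=+\infty$, which the paper also leaves tacit; your treatment of it (the sum vanishes for $L$ large, so the bound is trivially satisfied) is sound.
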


\begin{proof} This follows by Corollary \ref{transfer1} and the proof of Proposition \ref{Implication}.
\end{proof}

\section{Unconditional results}\label{section6}

\subsection{Non-rational singularities}

Applying the main result of \cite{CMN} in the presence of non-rational singularities, we prove Conjecture \ref{highest} in that setting.

\begin{thm}\label{nonrational} Let $f$, $Z$ and $\cO$ be as in Notation \ref{notation2} and $K\supset \cO$ a number field.  If, for some critical value $c_i$ of $f$ contained in $f(Z(\CC))$, we have that $f-c_i$ has non-rational singularities in every neighbourhood of $Z(\CC)$ in $\CC^n$, then Conjecture \ref{highest}  holds for $(f,Z)$.
\end{thm}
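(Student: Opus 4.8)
The plan is to reduce Conjecture \ref{highest} for $(f,Z)$ to the already-established analogue of Conjecture \ref{Conjexp} with $\moi_K(f,Z)$ replaced by the log canonical threshold, as proven in \cite{CMN} precisely in the non-rational singularities setting. First I would invoke Lemma \ref{critical values} and Proposition \ref{reduction} to localize at the relevant critical value: replacing $f$ by $f-c_i$ and $Z$ by $Z_i$, one may assume $f(Z(\CC))=0$, and then by the equality $\moi_K(f,Z)=\lct_Z(f)$ from \cite{CMN} (valid exactly when $f$ has non-rational singularities in every neighbourhood of $Z(\CC)$) the target bound becomes $w(b_0 f_{k_L,0}+b_{m-1}f_{k_L,m-1},Z_{k_L}^{(m-1)})\leq 2(mn - m\,\lct_Z(f))$. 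Here I would also use that $\lct_Z(f)$, being defined over $\CC$ by resolution, agrees after base extension, and that an embedded resolution over $K$ has tame good reduction mod $\cM_L$ for $L\in\cL'_{K,M}$ with $M$ large, so that the positive-characteristic fibers behave as expected.

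The heart of the argument is to translate the cohomological weight bound into a bound on exponential sums over $k_L((t))$ and then cite \cite{CMN}. Concretely, for $L=k_L((t))\in\cL'_{K,M_m}$ with $p=p_L>m$, Proposition \ref{simplified expsum} together with the Grothendieck–Lefschetz trace formula expresses $E_{f,L,Z,\psi_z}$ (for $\ord_t(z)=-m$) as $\iota$ of an alternating sum of traces of $\Frob_q$ on $\H^i_c(Z^{(m-1)}\otimes\overline{\FF}_p,\cL_{f,b,\Psi})$ for a suitable $b=(b_0,b_{m-1})\in k_L\times k_L^*$, and the same holds over any finite extension $k'$ of $k_L$ after replacing $\Frob_q$ by its power. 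By Corollary \ref{zero2} these cohomology groups vanish outside the range $i\leq 2\dim(Z_{m-1,m-2})$, and the number of such groups together with their dimensions is bounded by a constant $B_m$ (the Bombieri bound, \cite{Katz}) depending only on $m$, $\deg f$ and the complexity of $Z$. Hence if $w(f_{k_L},b,Z_{k_L}^{(m-1)})\leq 2(mn-m\,\lct_Z(f))$, purity gives $|E_{f,k'((t)),Z,\psi_{z'}}|\leq B_m\,(\#k')^{-m\,\lct_Z(f)}$ for \emph{all} finite extensions $k'/k_L$ and all $z'$ with $\ord_t(z')=-m$. Conversely — and this is the direction I actually need — the analogue of Conjecture \ref{Conjexp} from \cite{CMN}, valid for all non-archimedean local fields including those of positive characteristic of large residue characteristic, gives exactly such a bound $|E_{f,k'((t)),Z,\psi_{z'}}|\leq c_{f,\sigma}(\#k')^{-m\sigma}$ for every $\sigma<\lct_Z(f)$, uniformly in $k'$. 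The point is that having this estimate for \emph{every} finite extension $k'$ of $k_L$ simultaneously, with the uniform dimension bound $B_m$, forces each eigenvalue of $\Frob_q$ on each $\H^i_c$ to have absolute value $\leq (\#k_L)^{i/2}$ compatible with $w\leq 2(mn-m\sigma)$; letting $\sigma\to\lct_Z(f)$ along rationals and using that the weights are integers and the finitely many eigenvalues cannot accumulate, one upgrades this to $w(f_{k_L},b,Z_{k_L}^{(m-1)})\leq 2(mn-m\,\lct_Z(f))$ for all $L\in\cL'_{K,M_m}$ and all $b$.

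The main obstacle is precisely this last step: passing from exponential-sum bounds over a single field and all its extensions to a bound on individual Frobenius eigenvalues (equivalently, on the weight). This is the Bombieri–Katz type argument via the subspace theorem mentioned in the text after Notation \ref{notation for finite fields}, and it is what makes the uniform dimension estimate $B_m$ from \cite{Katz} indispensable — without a bound on the number of eigenvalues, cancellation in the trace sum over varying extensions does not control the individual eigenvalues. I would carry this out by the standard device: the generating function $\sum_{r\geq 1}\#(\text{sum over }\FF_{q^r})\,T^r/r$ is, up to sign, $\log$ of a ratio of characteristic polynomials, so knowing the sums over all $\FF_{q^r}$ with the stated bound pins down which eigenvalues can have large absolute value, and the weight bound follows. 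The remaining bookkeeping — choosing $M$ uniformly so that Lemma \ref{critical values}, tame good reduction, $p_L>m$, and the validity of the \cite{CMN} estimate in positive characteristic all hold — is routine given the results already assembled in Sections \ref{section2}–\ref{section5}.
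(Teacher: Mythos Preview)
Your overall plan---reduce to $f(Z(\CC))=0$, identify $\moi_K(f,Z)=\lct_Z(f)$ via \cite{CMN}, and then play the CMN exponential-sum upper bound against a cohomological lower bound---is the right one and matches the paper's strategy. However, the converse step you sketch has a genuine gap.

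You propose that the uniform bound $|E_{f,k'((t)),Z,\psi_{z'}}|\leq c\,(\#k')^{-m\sigma}$ over all finite extensions $k'/k_L$, together with the Bombieri--Katz dimension bound $B_m$, forces \emph{every} Frobenius eigenvalue on \emph{every} $\H^i_c$ to be small, via the zeta-function device. But the trace formula is an \emph{alternating} sum over cohomological degrees, and the generating series you write down is indeed the log of a \emph{ratio} of characteristic polynomials: eigenvalues of equal absolute value in even and odd degree can cancel perfectly in $E_r$ for all $r$ while still contributing to the highest weight $w(f_{k_L},b,Z_{k_L}^{(m-1)})$, which by definition is the maximum over \emph{all} eigenvalues on \emph{all} $\H^i_c$. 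So the implication ``small sums over all extensions $\Rightarrow$ small weight'' fails in general, and your appeal to a Bombieri--Katz/subspace argument does not close this.

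The paper avoids this cancellation problem by inserting a structural ingredient you do not mention: Musta\c{t}\u{a}'s jet-scheme formula for the log canonical threshold, which bounds $\dim(Z_{k_L,m-1,m-2})\leq mn-(m-1)\lct_Z(f)$ (after transferring the dimension from $\CC$ to $k_L$ via Robinson's principle). Combined with Corollary \ref{zero2} this already kills $\H^i_c$ for $i>2(mn-(m-1)\lct_Z(f))$. One is then left only with the \emph{top} possible degree $i=2\dim(Z_{k_L,m-1,m-2})$, and there cancellation cannot hide a large eigenvalue: nonvanishing of top compact-support cohomology forces the trace function $b_0f_0+b_{m-1}f_{m-1}$ to be constant on a nonempty open $U\subset Z_{k_L,m-1,m-2}$, so that the contribution of $U$ to the sum is essentially a point count and Bombieri--Katz yields a genuine \emph{lower} bound $|E|\gg q^{-(m-1)\lct_Z(f)-\epsilon}$ over infinitely many extensions, contradicting CMN. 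This gives $w\leq 2(mn-(m-1)\lct_Z(f))-1$; one more pass of the same argument (needed only when $\lct_Z(f)>1/2$), together with $\lct_Z(f)\leq 1$, closes the remaining gap of one unit to reach $w\leq 2(mn-m\lct_Z(f))$. Your proposal is missing both the Musta\c{t}\u{a} dimension input and this top-degree/constancy mechanism.
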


\begin{proof}
Let us fix $m\geq 2$. We use the notion  log canonical threshold $\lct_Z(f)$ of $f$ around $Z$ as in \cite{MustataJAMS}. Let $c_i$ be a critical value of $f$ contained in $f(Z(\CC))$.  By \cite[Proposition 3.10]{CMN},  $\moi_{K(c_i)}(f-c_i,Z_i)>1=\lct_{Z_i}(f-c_i)$ if $f-c_i$ has rational singularities in some neighborhood of $Z_i(\CC)$ in $\CC^n$, and $\moi_{K(c_i)}(f-c_i,Z_i)=\lct_{Z_i}(f-c_i)\leq 1$ if $f-c_i$ has non-rational singularities in every neighborhood of $Z_i(\CC)$ in $\CC^n$. Hence, by Proposition \ref{reduction} and the definition of $\moi_K(f,Z)$, it suffices to prove the weaker version of Conjecture \ref{highest} with $\lct_Z(f)$ instead of $\moi_K(f,Z)$,  moreover assuming that $f(Z(\CC))=0$.

\smallskip
Firstly, we show the following claim.

\noindent
 {\em There exists $M_m$ such that, for all local fields $L\in\cL'_{K,M_m}$, we have
$$\H_c^i(Z_{k_L}^{(m-1)}\otimes_{\FF_q}\overline{\FF}_p,\cL_{f_{k_L},b,\Psi})=0$$
 for all $i\geq 2(mn-(m-1)\lct_Z(f))$.}

\smallskip
\noindent
We combine several facts.

(i) By Robinson's principle, there exists $M_m>m$ such that the dimension of $Z_{k_L,m-1,m-2}$ does not depend on $L\in \cL'_{K,M_m}$ and is equal to the dimension of $Z_{\CC,m-1,m-2}$.

(ii) The main result from \cite{CMN} says that a weaker version of Conjecture \ref{Conjexp} with $\lct_Z(f)$ instead of $\moi_K(f,Z)$ holds for all $L\in\cL_{K,M_m}$.

(iii) Using the description of the log canonical threshold in terms of jet schemes in \cite{MustataJAMS}, we deduce that $\dim(Z_{k_L,m-1,m-2})\leq mn-(m-1)\lct_Z(f)$.

\smallskip
Then Proposition \ref{transfer} in $\cL_{K,M_m}\cup\cL'_{K,M_m}$ and  Corollary \ref{zero2}  imply that
$$\H_c^i(Z_{k_L}^{(m-1)}\otimes_{\FF_q}\overline{\FF}_p,\cL_{f_{k_L},b,\Psi})=0$$
for $L\in\cL'_{K,M_m}$ if $i>2(mn-(m-1)\lct_Z(f))$.
The only case left to prove is
\begin{equation}\label{true}
\H_c^{2(mn-(m-1)\lct_Z(f) )} (Z_{k_L,m-1,m-2}\otimes_{\FF_q}\overline{\FF}_p,\cL_{f_{k_L},b,\Psi})=0.
\end{equation}
If $\dim(Z_{k_L,m-1,m-2})<mn-(m-1)\lct_Z(f)$, then the equality (\ref{true}) is true. Suppose that $\dim(Z_{k_L,m-1,m-2})=mn-(m-1)\lct_Z(f)$ and (\ref{true}) does {\em not} hold for $L\in\cL'_{K,M_m}$. By the argument in \cite[Remark. 1.18]{Deligneetalecoho} for Poincar\'e duality or the more detailed argument in \cite[Example 20]{Kowalski}, there exists a non-empty open subset $U$ of $Z_{k_L,m-1,m-2}$ such that the trace function $\Tr_{k/\FF_{p_L}}((b_0f_{k_L,0}+b_{m-1}f_{k_L,m-1})(x^{(m-1)}))$ is constant on $U(k)$ for all finite extensions $k$ of $k_L$. So it suffices to look at the Grothendieck trace formula for the constant $\QQ_\ell$-sheaf of rank $1$ on $U$ when we want to find the size of the  exponential sum related to $b_0f_{k_L,0}+b_{m-1}f_{k_L,m-1}$ on $U$. We then use \cite[Theorem. 3.1]{Bombieri-Katz}, the majoration for the sum of Betti numbers in \cite{Katz}, the Grothendieck trace formula and Corollary \ref{zero2} to conclude that, for each $\epsilon>0$, there exists a constant $C_{\epsilon,m}$ such that
\begin{equation}\label{ineq1}
|E_{f,L',Z,\psi_{a\pi'^{-m}}}|\geq q_{L'}^{-mn}C_{\epsilon,m}q_{L'}^{mn-(m-1)\lct_Z(f)-\epsilon}
\end{equation}
for infinitely many finite extensions $(L',\pi',\psi')$ of $(L,\pi,\psi)$, and for some $a\in\cO_{L'}^*$ (depending on $b_{m-1}$).

On the other hand, from \cite[Theorem 1.5]{CMN}, Proposition \ref{transfer} and the fact that $f$ has non-rational singularities in every neighbourhood of $Z(\CC)$, we deduce that Conjecture \ref{Conjexp} holds for all such local fields $L'$, and  therefore
\begin{equation}\label{ineq2}
|E_{f,L',Z,\psi_{a\pi'^{-m}}}|<C_mq_{L'}^{-m\lct_Z(f)}
\end{equation}
for some constant $C_m>0$ for all $a\in\cO_{L'}^*$. We obtain a contradiction if we take $0<\epsilon\ll\lct_Z(f)$ and $q_{L'}\to +\infty$. Hence, we conclude that  equality (\ref{true}) holds for all $L\in\cL'_{K,M_m}$, thereby finishing the proof of the claim.

\smallskip
Using Corollary \ref{zero2} again, we deduce that
$$w(f_{k_L},b,Z_{k_L}^{(m-1)})  \leq 2(mn-(m-1)\lct_Z(f))-1$$
for all $L\in\cL'_{K,M_m}$ and  all $(b_0, b_{m-1})\in k_L \times k_L^*$. If $\moi_K(f,Z)\leq 1/2$, then we have $w(f_{k_L},b,Z_{k_L}^{(m-1)})  \leq 2(mn-m\lct_Z(f))$. If $\lct_Z(f)> 1/2$ and $w(f_{k_L},b,Z_{k_L}^{(m-1)})  = 2(mn-(m-1)\lct_Z(f) )-1$, by using the same argument as above, we see that there exists a constant $C_{\epsilon,m}$  such that
$$|E_{f,L',Z,\psi_{a\pi'^{-m}}}|\geq q_{L'}^{-mn}C_{\epsilon,m}q_{L'}^{mn-(m-1)\lct_Z(f)-1/2-\epsilon}$$
for infinitely many finite extensions $(L',\pi',\psi')$ of $(L,\pi,\psi)$, for some $a\in\cO_{L'}^*$.
We have a contradiction with inequality (\ref{ineq2}) if we take $0<\epsilon\ll \lct_Z(f)-1/2$ and $q_{L'}\to+\infty$. So, if $\lct_Z(f)> 1/2$, then
$$ w(f_{k_L},b,Z_{k_L}^{(m-1)})  \leq 2(mn-(m-1) \lct_Z(f))-2 .$$
We finish the proof by observing that $\lct_Z(f)\leq 1$.
\end{proof}

Note that the proof above of Theorem \ref{nonrational} implies the variant of Conjecture \ref{highest} with the log canonical threshold.

\begin{prop} Let $f$, $Z$ and $\cO$ be as in Notation \ref{notation2} and $K\supset \cO$ a number field.  Then for each $m\geq 2$ there exists  a number $M$,  depending on $m$, such that for all local fields $L\in\cL'_{K,M}$, we have that $w(b_0f_{k_L,0}+b_{m-1}f_{k_L,m-1},Z_{k_L}^{(m-1)})\leq 2(mn-m \sigma_{Z}(f))$ for all $(b_0, b_{m-1})\in k_L \times k_L^*$, where $\sigma_{Z}(f)=\min_{P\in Z(\CC)}\lct_P(f-f(P))$.
\end{prop}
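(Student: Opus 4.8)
The plan is to observe that this inequality is already contained in the proof of Theorem~\ref{nonrational}: once that proof has reduced to the case $f(Z(\CC))=0$ it produces exactly the bound $w\le 2(mn-m\lct_Z(f))$, and the non-rational singularities hypothesis enters there only at a single, easily circumvented point.

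First I would carry out the reduction to the case $f(Z(\CC))=0$, where $\sigma_Z(f)=\lct_Z(f)$. Fix $m\ge 2$. By Proposition~\ref{reduction} there is a constant $M$ such that, for all $L\in\cL'_{K,M}$ and all $(b_0,b_{m-1})\in k_L\times k_L^{*}$,
$$w(b_0f_{k_L,0}+b_{m-1}f_{k_L,m-1},Z_{k_L}^{(m-1)})=\max_{i\in I_L}w(b_{m-1}f_{k_L,m-1},Z_{i,k_L}^{(m-1)}),$$
with $I_L=\{i\mid\varphi(\cO_i)\subset L\}$; in particular the left side does not depend on $b_0$. Since $m\ge 2$, the jet coefficient $f_{m-1}$ equals $(f-c_i)_{m-1}$, so the $i$-th term on the right equals $w(b_{m-1}(f-c_i)_{k_L,m-1},Z_{i,k_L}^{(m-1)})$, where $f-c_i$ vanishes on $Z_i(\CC)$. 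As $Z_i(\CC)\subset\{P\in Z(\CC)\mid f(P)=c_i\}$, the definition of $\sigma_Z$ gives $\sigma_Z(f)\le\lct_{Z_i}(f-c_i)$ for every $i$. Hence it is enough to prove the asserted inequality in the case $f(Z(\CC))=0$ and then apply it with $f$ replaced by each of the finitely many polynomials $f-c_i$ and $Z$ by $Z_i$ (and $b_0=0$), taking the maximum of the resulting constants.

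It remains to handle $f(Z(\CC))=0$, and here I would invoke the proof of Theorem~\ref{nonrational}, which in this very case establishes $w(b_0f_{k_L,0}+b_{m-1}f_{k_L,m-1},Z_{k_L}^{(m-1)})\le 2(mn-m\lct_Z(f))$ for $L\in\cL'_{K,M_m}$ and all $(b_0,b_{m-1})$: it distinguishes the subcases $\lct_Z(f)\le 1/2$ and $\lct_Z(f)>1/2$, uses $\lct_Z(f)\le 1$, and rests on Robinson's principle for the dimension of $Z_{k_L,m-1,m-2}$, the jet-scheme description of the log canonical threshold of \cite{MustataJAMS}, Corollary~\ref{zero2}, Proposition~\ref{transfer}, Deligne's weight estimates, and the contradiction arguments around (\ref{true}) combining \cite[Theorem~3.1]{Bombieri-Katz}, the bounds of \cite{Katz} for sums of Betti numbers, and the Grothendieck trace formula. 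The sole appeal to non-rational singularities in that proof is to justify inequality (\ref{ineq2}), namely $|E_{f,L',Z,\psi_{a\pi'^{-m}}}|<C_m q_{L'}^{-m\lct_Z(f)}$; but this inequality is exactly the analogue of Conjecture~\ref{Conjexp} with $\moi_K(f,Z)$ replaced by $\lct_Z(f)$, which \cite{CMN} prove unconditionally for every $f$ via the Minimal Model Program, transported to positive characteristic by Proposition~\ref{transfer}. Thus (\ref{ineq2}) holds with no assumption on the singularities, and the argument of Theorem~\ref{nonrational} goes through unchanged.

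I do not expect a genuinely hard new step: the whole argument is bookkeeping around Theorem~\ref{nonrational}. The one point that must be checked carefully is precisely that the non-rational hypothesis enters the proof of Theorem~\ref{nonrational} only through (\ref{ineq2}), and that (\ref{ineq2}) coincides with the unconditional log-canonical-threshold estimate of \cite{CMN}; granting that, the critical-value reduction above — together with the elementary inequality $\sigma_Z(f)\le\lct_{Z_i}(f-c_i)$ — finishes the proof.
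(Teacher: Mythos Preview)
Your proposal is correct and follows precisely the paper's own approach: the paper offers no separate proof but simply notes that ``the proof above of Theorem~\ref{nonrational} implies the variant of Conjecture~\ref{highest} with the log canonical threshold,'' and you have supplied the details of that implication, correctly isolating (\ref{ineq2}) as the only place where the non-rational hypothesis is invoked and observing that the unconditional $\lct$-bound of \cite{CMN} makes it unnecessary. Your reduction to the case $f(Z(\CC))=0$ via Proposition~\ref{reduction} and the inequality $\sigma_Z(f)\le\lct_{Z_i}(f-c_i)$ is also the intended one.
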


\subsection{Weight conjecture and similarity}

\begin{thm}\label{transferanalytic} Let $f$, $Z$ and $\cO$ be as in Notation \ref{notation2} and $K\supset \cO$ a number field. Let $P,Q\in \cO^n$ satisfying $f(P)=g(Q)=0$. If $(f,P)\sim_K (g,Q)$, then Conjecture \ref{highest} holds for $(f,P)$ if and only if it holds for $(g,Q)$.
\end{thm}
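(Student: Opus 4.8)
The plan is to prove only the implication ``Conjecture~\ref{highest} for $(f,P)$ $\Rightarrow$ Conjecture~\ref{highest} for $(g,Q)$'', since $\sim_K$ is a symmetric relation (replace $\theta$ by $\theta^{-1}$ and $v$ by $1/(v\circ\theta^{-1})$) and then $f,g$ are interchangeable. So assume Conjecture~\ref{highest} holds for $(f,P)$. By Proposition~\ref{Implication} this already gives Conjectures~\ref{Conjexp} and~\ref{Conj1} for $(f,P)$; in particular $\moi_K(f,P)=\moi_{K'}(f,P)$ for every finite extension $K'/K$. By Main theorem~\ref{mainthm1} there are a finite extension $K_1/K$ and an integer $M_1$ with $Z(f,\chi,L,\Phi_{L,P};s)=Z(g,\chi,L,\Phi_{L,Q};s)$ and $E_{f,L,P,\psi}=E_{g,L,Q,\psi}$ for all $L\in\cL_{K_1,M_1}$; comparing non-trivial poles via Corollary~\ref{invariant sigma} yields $\sigma_{f,L,\Phi_{L,P}}=\sigma_{g,L,\Phi_{L,Q}}$, hence $\moi_{K_1}(f,P)=\moi_{K_1}(g,Q)$, and therefore
$$\moi_K(g,Q)\le\moi_{K_1}(g,Q)=\moi_{K_1}(f,P)=\moi_K(f,P),$$
the first inequality being the trivial one. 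Thus it will be enough to prove $w(g_{k_L},b,Q^{(m-1)}_{k_L})\le 2\bigl(mn-m\,\moi_K(f,P)\bigr)$ for all large $L$ and all $b$.

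Next I would transfer the equality of exponential sums to positive characteristic. Applying the transfer principle (Proposition~\ref{transfer}) once for $f$ and once for $g$ --- choosing, for $L\in\cL'_{K_1,M}$, a field $L''\in\cL_{K_1,M}$ with $k_{L''}\simeq k_L$ and matching $\ord$ and $\ac$ of the twists --- one gets $E_{f,L,P,\psi_z}=E_{f,L'',P,\psi''_{z''}}=E_{g,L'',Q,\psi''_{z''}}=E_{g,L,Q,\psi_z}$ for all $L\in\cL'_{K_1,M}$ and all $z$ with $\ord_L(z)\le-1$; since finite extensions of such $L$ again lie in $\cL'_{K_1,M}$, the identity holds there too. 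Fixing $m\ge2$ and $p_L>m$, Proposition~\ref{simplified expsum} and the Grothendieck trace formula rewrite both sides as alternating sums of traces of $\Frob$ on $\H^i_c(P^{(m-1)}_{k_L}\otimes\overline{\FF}_p,\cL_{f,b,\Psi})$, resp.\ on $\H^i_c(Q^{(m-1)}_{k_L}\otimes\overline{\FF}_p,\cL_{g,b,\Psi})$, with the \emph{same} parameter $b=(b_0,b_{m-1})$, because the reparameterisation $t_1$ of Lemma~\ref{changechar} depends only on $z$ and not on $f$ (and $b$ runs over all of $k_L\times k_L^*$ as $z$ varies). As a virtual Frobenius module is determined by its trace function over all finite extensions, this forces the equality $[\H^\bullet_c(P^{(m-1)}_{k_L},\cL_{f,b,\Psi})]=[\H^\bullet_c(Q^{(m-1)}_{k_L},\cL_{g,b,\Psi})]$ in the Grothendieck group. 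In particular, applying Deligne's weight bound \cite{DeligneWeil2} and Katz's estimate \cite{Katz} on the total Betti number, the hypothesis $w(f_{k_L},b,P^{(m-1)}_{k_L})\le 2(mn-m\,\moi_K(f,P))$ propagates to a uniform estimate $|E_{g,L',Q,\psi_z}|\le C_m\,q_{L'}^{-m\,\moi_K(f,P)}$ valid for \emph{all} finite extensions $L'$ of such $L$.

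It remains to turn this estimate back into the desired bound on $w(g_{k_L},b,Q^{(m-1)}_{k_L})$. Here I would argue exactly as in the proof of Theorem~\ref{nonrational}: by Corollary~\ref{zero2} the cohomology lives on the contact locus $Q_{g,m-1,m-2}$ and vanishes above degree $2\dim Q_{g,m-1,m-2}$, and by Deligne $\H^i_c$ is mixed of weights $\le i$. If some $\Frob$-eigenvalue had weight $>2(mn-m\,\moi_K(f,P))$, then stratifying $Q_{g,m-1,m-2}$ (using the fibration Lemmas~\ref{fibration} and~\ref{smooth} to discard the pieces with vanishing cohomology) and applying the Bombieri--Katz lower bound \cite[Theorem~3.1]{Bombieri-Katz} on a stratum on which the relevant function is geometrically constant, one produces a contradiction with the upper bound above; for the extremal step one uses $\moi_K(f,P)\ge\lct_P(f)=\lct_Q(g)$, whence $m\,\moi_K(f,P)>(m-1)\lct_Q(g)\ge mn-\dim Q_{g,m-1,m-2}$, and then iterates the argument downward on the cohomological degree. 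This yields $w(g_{k_L},b,Q^{(m-1)}_{k_L})\le 2(mn-m\,\moi_K(f,P))\le 2(mn-m\,\moi_K(g,Q))$, which is Conjecture~\ref{highest} for $(g,Q)$; since a weight depends only on $k_L$, $b$ and $m$, the passage from $K_1$ back to $K$ is routine bookkeeping (and then Proposition~\ref{Implication} re-applies to $(g,Q)$).

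I expect the main obstacle to be precisely this last step: extracting a \emph{sharp} bound on the highest weight of a non-pure cohomology from an exponential-sum estimate. One cannot simply read it off from the equality of virtual Frobenius modules, because a large weight could a priori cancel between even and odd cohomological degrees; excluding this requires the Bombieri--Katz subspace-theorem input together with a careful stratification of the contact locus of $g$, and --- unlike the non-rational case of Theorem~\ref{nonrational}, where $\lct\le1$ bounds the process automatically --- one must also control how many times the downward iteration must be run. A secondary nuisance is the bookkeeping with the finite extension $K_1$ and with which residue fields actually occur for local fields carrying an $\cO_{K_1}$-structure.
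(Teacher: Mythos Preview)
Your first two paragraphs are fine: the reduction to one direction, the chain of $\moi$ identities via Main theorem~\ref{mainthm1} and Proposition~\ref{Implication}, the transfer to positive characteristic, and the deduction of equality of virtual Frobenius modules are all correct. The problem is exactly where you say it is, and it is a real gap that your proposed workaround does not close.

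The Bombieri--Katz iteration from Theorem~\ref{nonrational} only runs for two steps. The first step ($\H_c^{2d}=0$ with $d=\dim Z_{g,m-1,m-2}$) uses that top-degree compact cohomology of a lisse sheaf forces the trace function to be constant on an open set; the second step ($w\neq 2d-1$) uses that any weight-$(2d-1)$ eigenvalue can only live in $\H_c^{2d-1}$, hence all such eigenvalues contribute with the same sign and Bombieri--Katz applies. At the third step the argument dies: a weight-$(2d-2)$ eigenvalue may occur in $\H_c^{2d-2}$ and $\H_c^{2d-1}$ simultaneously, so cancellation in the alternating sum is possible and no lower bound follows. In Theorem~\ref{nonrational} this is harmless because $\lct\le 1$ means two steps are enough; here you need to descend from $2d\le 2(mn-(m-1)\lct_Q(g))$ all the way to $2(mn-m\,\moi_K(f,P))$, and when $\moi_K(f,P)>1$ (the rational-singularities case, which is precisely the interesting one for this theorem) the gap exceeds two. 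The equality of virtual Frobenius modules does not help either: it only tells you that any eigenvalue of weight $>2(mn-m\,\moi_K(f,P))$ in $\H_c^\bullet(Q^{(m-1)},\cL_{g,b,\Psi})$ must cancel between even and odd degrees, not that it is absent.

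The paper bypasses this entirely by working geometrically rather than through traces. From the power-series expansion of $\theta$ and $v$ one gets, for each $m$, a polynomial automorphism $\theta_{m-1}$ of the $(m-1)$-jet space over $\CC$ which restricts to an isomorphism $\Theta_{m-1}:Z_{\CC,f,m-1,m-2}\to Z_{\CC,g,m-1,m-2}$ with $f_{m-1}=b_0\cdot(g_{m-1}\circ\Theta_{m-1})$ on the source (here $b_0=v(0)$). Since the contact loci and $f_{m-1},g_{m-1}$ are defined over $K$, quantifier elimination for $\mathrm{ACF}_0$ descends this to an isomorphism $\tilde\Theta_{m-1}$ over some finite $K'/K$ and a unit $a\in K'$, which spreads out and reduces mod $\cM_L$ for all $L\in\cL'_{K',M}$ with $M$ large. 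One then has an honest isomorphism of pairs $(Z_{k_L,f,m-1,m-2},\,f_{k_L,m-1})\cong(Z_{k_L,g,m-1,m-2},\,\overline a\,g_{k_L,m-1})$, hence an isomorphism of each cohomology group $\H_c^i$ with its Artin--Schreier sheaf (not merely of the virtual sum). Combined with Corollary~\ref{zero2}, this gives $w(f_{k_L},b,Z^{(m-1)}_{k_L})=w(g_{k_L},(b_0,\overline a\,b_{m-1}),Z^{(m-1)}_{k_L})$ on the nose, and the $\moi$ bookkeeping (which you already did correctly) finishes the proof. The moral: to compare highest weights you need an actual isomorphism of the cohomology, and the analytic similarity hands you one at the level of jet contact loci; going through exponential sums throws that information away.
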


\begin{proof}
As in the proof of Main Theorem \ref{mainthm1}, we can assume that $P=Q=0$. Denote $Z=\{0\}\subset\AA^n$. Since $(f,0)\sim_K (g,0)$, there exists neighbourhoods $U,V$ of $0$ in $\CC^n$, a biholomorphic map $\theta:U\to V$ and a holomorphic function $v: V\to \CC^*$ such that $f=(g.v)\circ\theta$.

We take local coordinates $(x_1,\dots,x_n)$ on $U$ and $(y_1,\dots,y_n)$ on $V$. 
Let $\theta$ and $v\circ \theta$ be given in coordinates as
$$  \theta : U\to V: (x_i)_{1\leq i \leq n}\mapsto \Bigl(y_j = \sum_{I\in\NN^n,|I|\geq 1}a_{j,I}x^I\Bigr) _{1\leq j \leq n},$$
where $\det(a_{j,I})_{1\leq j\leq n, |I|=1}\neq 0$, and
$$  v\circ \theta : U \to  \CC^* : (x_i)_{1\leq i \leq n}\mapsto \sum_{I\in\NN^n}b_{I}x^I ,$$
where $b_{0}\neq 0$, respectively.

For $x=(x_1,\dots,x_n)\in t\CC[[t]]^n$, with $x_j=\sum_{i\geq 1}x_{ij}t^i$, we denote as usual $x^{(i)}=(x_{\ell j})_{1\leq \ell\leq i, 1\leq j\leq n}$.
Then
$$y_j(x)=\sum_{i\geq 1}y_{ij}(x^{(i)})t^i \quad\text{ and } \quad  (v\circ\theta)(x)=b_{0}+\sum_{i\geq 1}v_i(x^{(i)})t^i ,$$
where $y_{ij}$ and $v_i$ are  polynomials in $x^{(i)}$ with coefficients in $\CC$.

Because $\theta$ is biholomorphic, the map $\theta_{m-1}$ given by $x^{(m-1)}\mapsto y^{(m-1)}=(y_{\ell j}(x^{(\ell)}))_{1\leq \ell\leq m-1, 1\leq j\leq n}$ is an automorphism of $Z_\CC^{(m-1)}$ for all $m\geq 2$. Since $f=(g.v)\circ\theta$, we have
$$f_{m-1}(x^{(m-1)})=\sum_{i=1}^{m-2}g_i(y^{(i)})v_{m-1-i}(x^{(m-1-i)})+b_0g_{m-1}(y^{(m-1)})$$
for all $x\in t\CC[[t]]^n$ and $m\geq 2$.  Hence $$\Theta_{m-1}:=\theta_{m-1}|_{Z_{\CC,f,m-1,m-2}}:Z_{\CC,f,m-1,m-2}\to Z_{\CC,g,m-1,m-2}$$ is an isomorphism  for all $m\geq 2$, and moreover  $$f_{m-1}|_{Z_{\CC,f,m-1,m-2}}=b_0 g_{m-1}|_{Z_{\CC,g,m-1,m-2}}\circ\Theta_{m-1}.$$

Remember that $f_{m-1}$ and $g_{m-1}$ have coefficients in $K$. Then, by quantifier elimination for the theory of algebraically closed fields of characteristic $0$ (see \cite[Theorem 3.2.2]{Marker}) or by using \cite[Corollary 2.2.10]{Marker}, there exists $a\in \overline{K}$ and a $\overline{K}$-isomorphism $\tilde{\Theta}_{m-1}:Z_{\overline{K},f,m-1,m-2}\to Z_{\overline{K},g,m-1,m-2}$ such that $f_{m-1}|_{Z_{\overline{K},f,m-1,m-2}}=a g_{m-1}|_{Z_{\overline{K},g,m-1,m-2}}\circ\tilde{\Theta}_{m-1}$.

Let $K'$ be a finite extension of $K$ such that $a\in K'$ and $\tilde{\Theta}_{m-1}$ and its inverse can be defined over $K'$. By Robinson's principle  or by using \cite[Corollary 2.2.10]{Marker} again, there exists $M$ such that $a\in \cO_L^{*}$ and $\tilde{\Theta}_{m-1}$ and its inverse can be defined over $\cO_L$ for all $L\in\cL_{K',M}$.
Moreover,  we then have for all $L\in\cL'_{K',M}$ that $\tilde{\Theta}_{m-1,k_L}$ is an isomorphism between $Z_{k_L,f_{k_L},m-1,m-2}$ and $Z_{k_L,g_{k_L},m-1,m-2}$ and that $$f_{k_L,m-1}|_{Z_{k_L,f_{k_L},m-1,m-2}}=\overline{a} g_{k_L,m-1}|_{Z_{k_L,g_{k_L},m-1,m-2}}\circ\tilde{\Theta}_{m-1,k_L}.$$

\smallskip
After these preparations, we now suppose that Conjecture \ref{highest} holds for $(g,Z)$.
Because the weights do not change by base field extension, we have by definition of $\cL_{f_{k_L},b,\Psi}$ and $\cL_{g_{k_L},b,\Psi}$ that $w(f_L,b, Z_{k_L,f_{k_L},m-1,m-2})=w(\overline{a} g_{k_L},b,Z_{k_{L'},g_{k_{L}},m-1,m-2})$ for all $L\in\cL'_{K,M}$ and  $L'\in\cL'_{K',M}$ such that $L'$ is a finite extension of $L$, and  all $b=(b_0,b_{m-1})\in k_L\times k_L^{*}$. By Corollary \ref{zero2} and the fact that Conjecture \ref{highest} holds for $(g,Z)$, we must have $$w(f_{k_L},b,Z_{k_L,f_{k_L},m-1,m-2})\leq 2(mn-m\moi_{K'}(g,Z)).$$

By the proof of Proposition \ref{Implication}, we have $\moi_K(g,Z)=\moi_{K'}(g,Z)\leq\moi_K(f,Z)$. Main theorem \ref{mainthm1} tells us that there exists a finite extension $K''$ of $K$ such that $\moi_{K''}(f,Z)=\moi_{K''}(g,Z)$. We now use Proposition \ref{Implication} again to obtain that $\moi_{K''}(f,Z)=\moi_{K''}(g,Z)=\moi_K(g,Z)$.

On the other hand, by the definition of the motivic oscillation index, we have $\moi_{K''}(f,Z)\geq \moi_K(f,Z)$. Therefore we have $\moi_K(f,Z)=\moi_K(g,Z)$ and Conjecture \ref{highest} holds for $(f,Z)$.
\end{proof}

\subsection{Main Theorem \ref{mainthm2} and its corollaries}

The highest weight satisfies a Thom-Sebastiani type property.

\begin{prop}[Thom-Sebastiani for highest weight]\label{Thom-Sebastiani}   Let $f_1\in\overline{\QQ}[x_1,\dots,x_{n_1}]$, $f_2\in \overline{\QQ}[y_1,\dots,y_{n_2}]$ and   $f=f_1+f_2$. Let $Z_1$ and $Z_2$ be subschemes of  $\AA_{\overline{\QQ}}^{n_1}$ and $\AA_{\overline{\QQ}}^{n_2}$, respectively, and $Z=Z_1\times Z_2$.
Let $\cO$ be a finitely generated $\ZZ$-subalgebra of $\overline{\QQ}$ over which $f_1,f_2, Z_1,Z_2$ are defined, and let $K\supset \cO$ be a number field.
 Then for all $m\geq 1$ and all $L\in\cL'_{K,1}$ we have
$$w(f_{k_L},b,Z_{k_L}^{(m-1)})=w(f_{1,k_L},b,Z_{1,k_L}^{(m-1)})+w(f_{2,k_L},b,Z_{2,k_L}^{(m-1)})$$
for all $(b_0, b_{m-1})\in k_L \times k_L^*$.
\end{prop}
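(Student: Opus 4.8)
The plan is to realize the Artin--Schreier sheaf attached to the jet polynomial of $f$ on the jet scheme of $Z$ as an external tensor product of the corresponding sheaves for $f_1$ and $f_2$, and then to read off the statement from the Künneth formula together with the multiplicativity of weights under tensor products.

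First I would record two elementary compatibilities. From the functorial description of jet schemes one sees that the $r$-th jet functor is compatible with products, and that the projection $\pi_{r0}$ on $(\AA_{k_0}^{n_1}\times\AA_{k_0}^{n_2})_r=(\AA_{k_0}^{n_1})_r\times(\AA_{k_0}^{n_2})_r$ is the product of the two projections $\pi_{r0}$; hence $Z^{(r)}=Z_1^{(r)}\times Z_2^{(r)}$ as schemes over $k_0$, and likewise after any base change, in particular $Z_{k_L}^{(m-1)}=Z_{1,k_L}^{(m-1)}\times_{k_L} Z_{2,k_L}^{(m-1)}$. Moreover, since $f=f_1+f_2$ with $f_1$ and $f_2$ in disjoint sets of variables, the $\ell$-th jet polynomial of $f$ splits as $f_\ell=(f_1)_\ell+(f_2)_\ell$, where $(f_i)_\ell$ is the $\ell$-th jet polynomial of $f_i$ and only involves the jet coordinates attached to the variables of $f_i$. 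Consequently, writing $g_i:=b_0(f_i)_0+b_{m-1}(f_i)_{m-1}$ as a $k_L$-morphism $Z_{i,k_L}^{(m-1)}\to\AA^1_{k_L}$, the morphism $b_0f_0+b_{m-1}f_{m-1}\colon Z_{k_L}^{(m-1)}\to\AA^1_{k_L}$ factors as the composite of $g_1\times g_2\colon Z_{1,k_L}^{(m-1)}\times Z_{2,k_L}^{(m-1)}\to\AA^1_{k_L}\times\AA^1_{k_L}$ with the addition morphism $s\colon\AA^1_{k_L}\times\AA^1_{k_L}\to\AA^1_{k_L}$.

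Next I would use the multiplicativity of the Artin--Schreier sheaf, $s^{*}\cL_{\Psi}\cong\cL_{\Psi}\boxtimes\cL_{\Psi}$ on $\AA^1\times\AA^1$, which follows from the additivity of $\Psi$ and the explicit description of the covering $\cX_0$. Pulling back along $g_1\times g_2$ yields $\cL_{f_{k_L},b,\Psi}\cong\cL_{f_{1,k_L},b,\Psi}\boxtimes\cL_{f_{2,k_L},b,\Psi}$. The Künneth formula for $\ell$-adic cohomology with compact support then gives, for every $i$, a $\Frob_q$-equivariant isomorphism
\begin{align*}
\H_c^i\bigl(Z_{k_L}^{(m-1)}\otimes_{k_L}\overline{\FF}_p,\cL_{f_{k_L},b,\Psi}\bigr)\cong\bigoplus_{a+b=i}\;&\H_c^a\bigl(Z_{1,k_L}^{(m-1)}\otimes_{k_L}\overline{\FF}_p,\cL_{f_{1,k_L},b,\Psi}\bigr)\\
&\otimes\;\H_c^b\bigl(Z_{2,k_L}^{(m-1)}\otimes_{k_L}\overline{\FF}_p,\cL_{f_{2,k_L},b,\Psi}\bigr),
\end{align*}
with $\Frob_q$ acting diagonally on each summand. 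Hence the eigenvalues of $\Frob_q$ occurring on the left-hand side (over all $i$) are exactly the products $\lambda_1\lambda_2$ with $\lambda_1$ an eigenvalue on some $\H_c^{a}$ of the sheaf for $(f_1,Z_1)$ and $\lambda_2$ an eigenvalue on some $\H_c^{b}$ of the sheaf for $(f_2,Z_2)$; and if $\lambda_1$ is pure of weight $j_1$ and $\lambda_2$ pure of weight $j_2$ related to $\FF_q$, then $\lambda_1\lambda_2$ is pure of weight $j_1+j_2$. Taking the maximum over all occurring weights on each side gives the asserted identity, the convention $-\infty+w=-\infty$ taking care of the degenerate case where one (and hence both) of the cohomologies vanish.

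I do not expect a genuine obstacle here: the statement is a formal consequence of the Künneth formula together with the behaviour of weights under tensor product, and---unlike the results that go through Proposition \ref{simplified expsum}---no hypothesis relating $p$ and $m$ is needed, since $w(f_{k_L},b,Z_{k_L}^{(m-1)})$ is defined purely cohomologically. The only points worth a careful check are the two compatibilities in the first step (that jet schemes and jet polynomials of an external sum decompose accordingly) and the $\Frob_q$-equivariance of the Künneth isomorphism; both are standard.
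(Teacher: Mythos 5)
Your proposal is correct and takes essentially the same approach as the paper: you decompose $Z_{k_L}^{(m-1)}$ as a product of the two jet schemes, recognize the Artin--Schreier sheaf as an external tensor product of the sheaves attached to $f_1$ and $f_2$, and conclude by the Künneth formula together with the additivity of weights under tensor product. The paper states this more tersely (writing the sheaf directly as $\phi_1^*\cL_{f_{1,k_L},b,\Psi}\otimes\phi_2^*\cL_{f_{2,k_L},b,\Psi}$ rather than passing through the addition morphism), but the argument is identical, and you are right that no constraint between $p$ and $m$ is needed since the weight is defined purely cohomologically.
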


\begin{proof} By construction we have $Z_{k_L}^{(m-1)}=Z_{1,k_L}^{(m-1)}\times Z_{2,k_L}^{(m-1)}$ and $\cL_{f_{k_L},b,\Psi}=\phi_1^*(\cL_{f_{1,k_L},b,\Psi})\otimes \phi_2^*(\cL_{f_{2,k_L},b,\Psi})$, where $\phi_1$ and $\phi_2$ are the projections from $Z_{k_L}^{(m-1)}$ to
$Z_{1,k_L}^{(m-1)}$ and $Z_{2,k_L}^{(m-1)}$, respectively. The claim follows by the K\"unneth formula.
\end{proof}

\begin{cor}\label{Th-S} Using the notation of Proposition \ref{Thom-Sebastiani}, suppose that $f_1(Z_1(\CC))$ contains only one critical value $c$ of $f$ and that $f_1-c$ has non-rational singularities in every neighborhood of $Z_1(\CC)$ in $\CC^{n_1}$. If Conjecture \ref{highest} holds for $(f_1,Z_1)$ and $(f_2,Z_2)$ then it also holds for $(f,Z)$.
\end{cor}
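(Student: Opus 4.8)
The plan is to combine the Thom--Sebastiani identity for the highest weight (Proposition \ref{Thom-Sebastiani}) with Theorem \ref{nonrational} and the hypothesis on $(f_2,Z_2)$, the real content being a Thom--Sebastiani inequality for the motivic oscillation index. First I would reduce, using Proposition \ref{reduction} and the definition of $\moi_K$ (the critical locus of $f=f_1+f_2$ inside $Z_1\times Z_2$ is the product of the critical loci of the $f_i$, so every critical value of $f$ relevant to $\moi_K(f,Z)$ has the form $c+d$, with $c$ the unique critical value of $f_1$ on $Z_1(\CC)$ and $d$ one of $f_2$ on $Z_2(\CC)$), to the case $f_1(Z_1(\CC))=f_2(Z_2(\CC))=\{0\}$; then $\moi_K(f_1,Z_1)=\lct_{Z_1}(f_1)\le 1$ by the main theorem of \cite{CMN}. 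Since Conjecture \ref{highest} holds for $(f_1,Z_1)$ (by Theorem \ref{nonrational}, the non-rationality being what makes this unconditional) and for $(f_2,Z_2)$ (by hypothesis), Proposition \ref{Thom-Sebastiani} gives, for each $m\ge 2$, a number $M$ with
\[
w(f_{k_L},b,Z_{k_L}^{(m-1)})=w(f_{1,k_L},b,Z_{1,k_L}^{(m-1)})+w(f_{2,k_L},b,Z_{2,k_L}^{(m-1)})\le 2\bigl(mn-m\rho\bigr)
\]
for all $L\in\cL'_{K,M}$ and all $b=(b_0,b_{m-1})\in k_L\times k_L^*$, where $n=n_1+n_2$ and $\rho:=\moi_K(f_1,Z_1)+\moi_K(f_2,Z_2)$. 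Hence it is enough to prove $\moi_K(f,Z)\le\rho$; the opposite inequality, which then makes $\rho$ the exact value of $\moi_K(f,Z)$, follows from the factorisation $E_{f,L,Z,\psi}=E_{f_1,L,Z_1,\psi}\cdot E_{f_2,L,Z_2,\psi}$ and Corollary \ref{invariant sigma}, but is not needed here.

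To prove $\moi_K(f,Z)\le\rho$ I would produce, for infinitely many local fields $L$, a non-trivial pole of an Igusa zeta function of $f$ with real part $-\rho$, and then pass to the liminf in the definition of $\moi_K(f,Z)$. Fix embedded resolutions $(Y_1,h_1)$ and $(Y_2,h_2)$ of $f_1$ and $f_2$ over $\overline K$, and choose irreducible components $E_{i_0}\subset h_1^{-1}(f_1^{-1}(0))$ and $E_{j_0}\subset h_2^{-1}(f_2^{-1}(0))$, lying over a point of $Z_1(\CC)$, resp. $Z_2(\CC)$, with numerical data $(N_{i_0},\nu_{i_0})$ and $(N_{j_0},\nu_{j_0})$ realising $\nu_{i_0}/N_{i_0}=\lct_{Z_1}(f_1)=\moi_K(f_1,Z_1)$ and $\nu_{j_0}/N_{j_0}=\moi_K(f_2,Z_2)$. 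On $Y_1\times Y_2$ one has $f\circ(h_1\times h_2)=u_1z_1^{N_{i_0}}+u_2z_2^{N_{j_0}}$ near $E_{i_0}\times E_{j_0}$ with $z_1,z_2$ transverse coordinates; the weighted blow-up with weights $N_{j_0}/g$ and $N_{i_0}/g$, where $g=\gcd(N_{i_0},N_{j_0})$, introduces after completion to an embedded resolution of $f$ an exceptional divisor $\tilde E$ with $N_{\tilde E}=\lcm(N_{i_0},N_{j_0})$ and $\nu_{\tilde E}=\tfrac{N_{j_0}}{g}\nu_{i_0}+\tfrac{N_{i_0}}{g}\nu_{j_0}$, so $\nu_{\tilde E}/N_{\tilde E}=\nu_{i_0}/N_{i_0}+\nu_{j_0}/N_{j_0}=\rho$. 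What remains is to check via Denef's formula (Theorem \ref{goodreduction}) that $\tilde E$ contributes a surviving non-trivial pole of $Z(f,\chi,L,\Phi_{L,Z};s)$ at real part $-\rho$ for infinitely many $L$.

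This last step is the main obstacle, and it is exactly where non-rationality of $f_1$ is used. By the main theorem of \cite{CMN}, non-rationality forces $E_{i_0}$ to contribute a non-trivial pole of $Z(f_1,\chi,L,\Phi_{L,Z_1};s)$ at $-\moi_K(f_1,Z_1)$ for all $L$ with sufficiently large residue characteristic (the relevant stratum $\overline{E}_{i_0}$ being geometrically irreducible, so it has $k_L$-points by Lang--Weil), whereas $E_{j_0}$ contributes one for $f_2$ only over a positive-density set $S$ of local fields; thus the $f_1$-side of the relevant Denef coefficient is \lq\lq robustly\rq\rq\ nonzero while the $f_2$-side is nonzero on the infinite set $S$. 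The coefficient attached to $\tilde E$ in $Z(f,\chi,L,\Phi_{L,Z};s)$ is, for $L\in S$ with large residue characteristic, a character sum over the natural fibration of the open part of $\tilde E$ over $\overset{\circ}{E_{i_0}}(k_L)\times\overset{\circ}{E_{j_0}}(k_L)$; one must show it is nonzero and not cancelled by the contributions of other divisors of the same $\nu/N$. For $\rho<1$ it suffices to take $\chi$ trivial, where the coefficient is a genuinely positive point count; for $\rho\ge 1$ one has to exhibit a non-trivial character $\chi$ of order $d\mid N_{\tilde E}>1$ for which the $\chi$-coefficient is nonzero, using that the unit $u_1+u_2\eta^{N_{j_0}}$ appearing along $\tilde E$ is generically not a $d$-th power. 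Carrying out this non-vanishing/non-cancellation analysis --- in essence a Thom--Sebastiani statement for Igusa zeta functions, that the pole of $f_1$ at $-\lct_{Z_1}(f_1)$ \lq\lq convolved\rq\rq\ with the pole of $f_2$ at $-\moi_K(f_2,Z_2)$ survives for infinitely many $L$ --- is the technical heart; the rest is bookkeeping with numerical data and Proposition \ref{Thom-Sebastiani}.
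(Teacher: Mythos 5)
Your overall plan is the one the paper uses: combine the additivity of highest weights (Proposition~\ref{Thom-Sebastiani}) with an additivity statement for the motivic oscillation index, and you correctly isolate that the only inequality needed is $\moi_K(f,Z)\le \moi_K(f_1,Z_1)+\moi_K(f_2,Z_2)$ (you also correctly note the opposite inequality is the easy one coming from $E_f=E_{f_1}\cdot E_{f_2}$). But that inequality is exactly where your proposal leaves a genuine gap. You try to establish it geometrically, by producing a divisor $\tilde E$ via a weighted blow-up of a product of components realising $\lct_{Z_1}(f_1)$ and $\moi_K(f_2,Z_2)$, and then appealing to Denef's formula (Theorem~\ref{goodreduction}) to see it contribute a non-trivial pole at $-\rho$. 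You then say explicitly that the non-vanishing/non-cancellation analysis of the corresponding Denef coefficients is ``the technical heart,'' and you do not carry it out. That is precisely the step that cannot be waved away: in Denef's formula many components with the same ratio $\nu/N$ can contribute, the residue at $-\rho$ is a signed combination, and for a non-trivial character $\chi$ (which is needed as soon as $\rho\ge 1$) the relevant coefficients $c_{I,\Phi,\chi}$ are character sums whose non-vanishing is not automatic from a fibration picture of $\tilde E$ alone. So, as written, the proof is incomplete.

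The paper's proof avoids this difficulty altogether. Rather than trying to exhibit a surviving pole through the geometry of one embedded resolution of $f_1+f_2$, it works at the level of the exponential sum/zeta-function correspondence: by the non-rationality hypothesis and Theorem~\ref{goodreduction}, the real part $-\moi_K(f_1,Z_1)=-\lct_{Z_1}(f_1-c)$ is a pole of $Z(f_1-c,\chi_{\mathrm{triv}},L,\Phi_{L,Z_1};s)$ for \emph{all} $L$ of large residue characteristic and with \emph{trivial} character (this is the real payoff of non-rationality --- the $\lct$-pole is robustly present and attached to $\chi_{\mathrm{triv}}$). Then the Thom--Sebastiani identity for Igusa zeta functions from \cite[Section~5.1]{DenefBourbaki}, which encodes the factorisation $E_{f}=E_{f_1}\cdot E_{f_2}$ together with the Mellin-type relation of Proposition~\ref{zetaex}, gives directly that $\moi_K(f,Z)=\moi_K(f_1,Z_1)+\moi_K(f_2,Z_2)$, with no residue-cancellation analysis needed. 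In effect you set out to re-prove that reference by hand and stopped at the hard part. One smaller remark: your first reduction assumes $f_2(Z_2(\CC))=\{0\}$, but the hypothesis only controls the critical values of $f_1$ on $Z_1(\CC)$; $f_2$ may have several critical values on $Z_2(\CC)$, and the reduction there needs to go through the decomposition into the $Z_{2,i}$ as in Notation~\ref{notation2} and Proposition~\ref{reduction}, not a single translation.
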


\begin{proof}
Combining our assumption with Theorem  \ref{goodreduction}, there exists an integer $M$ such that $-\moi_K(f_1,Z_1)=-\lct_{Z_1}(f_1-c)$ is always the real part of a non-trivial pole of $Z(f_1-c,\chi_{\textnormal{triv}},L,\Phi_{L,Z_1},s)$ for all local fields $L\in \cL_{K',M}$, where $K'$ is some finite extension of $K$. By \cite[Section 5.1]{DenefBourbaki} and the definition of motivic oscillation index, we then have $\moi_K(f,Z)=\moi_K(f_1,Z_1)+\moi_K(f_2,Z_2)$. Our claim follows by combining this fact with Proposition \ref{Thom-Sebastiani}.
\end{proof}

\begin{proof}[Proof of Main Theorem \ref{mainthm2}] The claim follows by combining Theorems \ref{nonrational} and \ref{transferanalytic} and Corollary \ref{Th-S}.
\end{proof}

\begin{proof}[Proof of Corollary \ref{ADE}] If $f$ has singularities of type $A-D-E$ at $P$, then $(f,P)\sim_K (g,0)$, where $g$ is one of the following polynomials:
\begin{itemize}
\item[(i)]$g=x_1^{d+1}+x_2^2+\dots+x_n^2$ \quad with $d\geq 1$,
\item[(ii)]$g=x_1^{d-1}+x_1x_2^2+x_3^2+\dots+x_n^2$ \quad with $d\geq 4$,
\item[(iii)]$g=x_1^3+x_2^4+x_3^2+\dots+x_n^2$,
\item[(iv)]$g=x_1^3+x_1x_2^3+x_3^2+\dots+x_n^2$,
\item[(v)]$g=x_1^3+x_2^5+x_3^2+\dots+x_n^2$,
\end{itemize}
where $n\geq 3$. Since $x^d$ (with $d>1$), $x_1^3+x_1x_2^3$ and $x_1^{d-1}+x_1x_2^2$ have non-rational singularities at $0$, our claim follows by Theorem \ref{transferanalytic} and Proposition \ref{Implication}.
\end{proof}

\begin{proof}[Proof of Corollary \ref{3variables}]
Let $f$ be a polynomial in three variables.  By Proposition \ref{reduction}  and the definition of $\moi_K(f,Z)$, it suffices to prove our theorem in case $f(Z(\CC))=0$. If $f$ is smooth at $Z_\CC$,  then there exists $M$ such that $E_{f,L,Z,\psi}=0$ for all $L\in\cL_{K,M}$ and $m_{\psi}\geq 2$ (see \cite[Remark 4.5.3]{DenefBourbaki} or by using Proposition \ref{Implication} and Lemma \ref{smooth}), and then our claim follows from the fact that $\moi_K(f,Z)=+\infty$. Denote by $\Sing(f)$ the singular locus of $f$, we derive from the smooth case that $E_{f,L,Z,\psi}=E_{f,L,Z\cap\Sing(f),\psi}$ if $m_{\psi}\geq 2$, and hence we can reduce to the case $Z\subset \Sing(f)$.

If $\dim(Z)>0$, then $f$ has non-rational singularities in every neighbourhood of $Z(\CC)$, so our claim follows by \cite{CMN}. If $\dim(Z)=0$, we can reduce to the case $Z=\{0\}$. If $f$ has a non-rational singularity at $0$, we are again done by \cite{CMN}. If $f$ is non-smooth and has a rational singularity at $0$, then $f$ has singularity of type $A-D-E$ (see \cite{Durfee}). This case follows by Corollary \ref{ADE}.
\end{proof}

\section{Appendix}\label{section7}

Let $L$ be a $p$-adic field. To a non-constant polynomial $f\in L[x_1,\dots,x_n]$, a  Schwartz-Bruhat function $\Phi$ on $L^n$, and a character $\chi$ of $\cO_L^*$, we associated the Igusa local zeta function
$Z(f,\chi,L,\Phi;s)$.
The maximal possible order of its poles is $n$.

When $\chi$ is the trivial character, Laeremans and the second author \cite{Laeremans-Veys} showed that the real part a pole of order $n$ is always of the form $1/N$ for some positive integer $N$.
They also conjectured that there is at most one possible real part of a pole or order $n$, and that it is then necessarily equal to $-\lct_{Z}(f)$.
This was shown by Nicaise and Xu in \cite{Nicaise-Xu}.

When $\chi$ is a non-trivial character of order $d>1$, Cluckers, Musta\c t\u a and the first author asked in \cite{CMN} the following. {\em If $Z$ is a subscheme of $\AA_{\cO_L}^n$ and $s_0$ is the real part of a pole of order $n$ of $Z(f,\chi,L,\Phi_{Z},s)$, does this imply that $s_0=-\lct_{Z}(f)$ and $s_0=-1/dk$ for some positive integer $k$?}
They obtain in this direction the following result \cite[Proposition 3.8]{CMN}: with $f$ and $Z$  as in Notation \ref{notation2}, if $s_0=-\lct_{Z}(f)$ is a pole of order $n$ of $Z(f,\chi,L,\Phi_Z,s)$ for infinitely many $L$ with arbitrarily large residue field characteristic, then $\lct_{Z}(f) \leq 1/d$.

Here we answer their question affirmatively.

\begin{thm}
Let $L$ be a $p$-adic field, $f\in L[x_1,\dots,x_n]\setminus L$, and $\Phi$ a Schwartz-Bruhat function on $L^n$. Let $\chi$ be a character of $\cO_L^*$ of order $d>1$.
If $s_0$ is the real part of a pole of order $n$ of $Z(f,\chi,L,\Phi;s)$, then $s_0=-\lct_{Z}(f)=-1/dk$ for some positive integer $k$.
\end{thm}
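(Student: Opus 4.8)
The plan is to reduce the statement, by means of an embedded resolution and a local "$d$-th root" substitution, to the case of the trivial multiplicative character, where the two ingredients we need are at hand: the theorem of Laeremans and the second author \cite{Laeremans-Veys}, that the real part of a pole of order $n$ of an Igusa zeta function is of the form $-1/N$ with $N\in\ZZ_{>0}$, and the theorem of Nicaise--Xu \cite{Nicaise-Xu}, that this real part equals the negative of the log canonical threshold.

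First I would fix an embedded resolution $h\colon Y\to\AA^n_L$ of $f^{-1}(0)$, with irreducible components $E_i$ ($i\in T$) of $(h^{-1}(f^{-1}(0)))_{\mathrm{red}}$ and numerical data $(N_i,\nu_i)$. Near a closed point $a$ with $|T_a|=n$ one has a regular system of parameters $y_1,\dots,y_n$ with $E_{i_j}=\{y_j=0\}$, $f\circ h=u\prod_{j}y_j^{N_{i_j}}$ ($u$ a unit) and $h^{*}(dx)=w\prod_{j}y_j^{\nu_{i_j}-1}\,dy$ ($w$ a unit); after the change of variables $x=h(y)$ the contribution of a small ball $B$ around $a$ to $Z(f,\chi,L,\Phi;s)$ equals
\[
\int_{B}(\Phi\circ h)(y)\,\chi\bigl(ac(u(y))\bigr)\,\prod_{j=1}^n\chi\bigl(ac(y_j)\bigr)^{N_{i_j}}\,\prod_{j=1}^n|y_j|_L^{N_{i_j}s+\nu_{i_j}-1}\,|dy|.
\]
Writing $y_j=\pi^{r_j}\eta_j$ with $\eta_j\in\cO_L^*$ and summing over $r_j\ge 1$, the integral over $\eta_j$ involves $\int_{\cO_L^*}\chi(\eta_j)^{N_{i_j}}\,|d\eta_j|$, which vanishes as soon as $d\nmid N_{i_j}$ (then $\chi^{N_{i_j}}$ is a non-trivial character of $\cO_L^*$). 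This is exactly the mechanism behind the restriction $\sum_{I:\ d\mid N_i\ \forall i\in I}$ in Denef's formula (Theorem \ref{goodreduction}), which governs the poles also without good reduction. Hence a pole of order $n$ of $Z(f,\chi,L,\Phi;s)$ with real part $s_0$ forces the existence of $n$ components $E_{i_1},\dots,E_{i_n}$ with a common closed point $a$ whose image $h(a)$ lies in the support of $\Phi$, and with $d\mid N_{i_j}$ and $\nu_{i_j}/N_{i_j}=-s_0$ for all $j$; in particular the displayed local integral at $a$ has itself a pole of order $n$ at $s_0$.

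Next I would exploit the divisibility. Write $N_{i_j}=dM_{i_j}$ and set $G:=\prod_{j}y_j^{M_{i_j}}$, a regular function on $B$ with $f\circ h=u\,G^{d}$. Since $\chi^{d}$ is trivial and $u$ is a unit, $\chi(ac(f\circ h))=\chi(ac(u))$ is locally constant, and $|f\circ h|^{s}=|G|^{ds}$; thus the local integral above is, up to a locally constant nonvanishing factor, the contribution of $B$ to the (trivial character) Igusa zeta function of $G$ relative to the gauge form $h^{*}(dx)$, taken at the parameter $s'=ds$ — equivalently, one may pass to the degree-$d$ Kummer cover $z^{d}=u$, on which $f\circ h$ is literally a $d$-th power (in the tame case $p_L\nmid d$; for $p_L\mid d$ one works relative to the gauge form, or passes to a finite extension). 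The numerical data of $G$ along $\{y_j=0\}$ being $(M_{i_j},\nu_{i_j})$ and $\nu_{i_j}/M_{i_j}=d\,\nu_{i_j}/N_{i_j}=-d s_0$ being independent of $j$, this zeta function of $G$ has a pole of order $n$ at $s'=d s_0$. By \cite{Laeremans-Veys}, $d s_0=-1/N'$ for some positive integer $N'$, whence $s_0=-1/(dN')=-1/dk$; and by \cite{Nicaise-Xu} (applied to the trivial-character zeta function, for which the same stratum contributes an order-$n$ pole at $s_0$), $s_0=-\lct_Z(f)$. Together with $\mathrm{ord}_{E}(G^{d})=\mathrm{ord}_{E}(f\circ h)$ for every exceptional $E$ this is consistent, and gives $s_0=-\lct_Z(f)=-1/dk$.

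I expect the principal obstacle to lie in the first step carried out in its correct logical form: one must show that an order-$n$ pole of the full sum $Z(f,\chi,L,\Phi;s)$ genuinely produces a single stratum $\overset{\circ}{E}_{I}$ ($|I|=n$) meeting $\Supp(\Phi)$ along which all $N_i$ are divisible by $d$ and all $\nu_i/N_i$ equal $-s_0$ — i.e. rule out that the maximal-order pole survives only through cancellations among lower-dimensional strata or among strata with differing numerical data — and it is precisely here that the hypothesis "$\chi$ has order $d>1$" is used essentially, through the vanishing $\int_{\cO_L^*}\chi^{N}=0$ for $d\nmid N$. Two secondary technical points also need care: Denef's description of the zeta function via a resolution is cleanest under (tame) good reduction, so for a general $p$-adic field one must argue directly on the resolution as above (the candidate poles depending only on the geometric data $(N_i,\nu_i)$); and one must pin down the identification of the local log canonical threshold at $h(a)$ with the invariant $\lct_Z(f)$ in the statement, which amounts to locating the maximal-order pole at a worst singularity of $f$ along the relevant locus, together with the treatment of signed Schwartz--Bruhat functions and of wildly ramified $\chi$.
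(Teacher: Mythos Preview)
Your first step --- locating $n$ exceptional components $E_{i_1},\dots,E_{i_n}$ meeting at a point $a$ with $h(a)\in\Supp(\Phi)$, $d\mid N_{i_j}$ and $\nu_{i_j}/N_{i_j}=-s_0$ for all $j$ --- is correct and is exactly the opening of the paper's proof; the invocation of \cite{Nicaise-Xu} to get $s_0=-\lct_Z(f)$ is likewise the same (their result is geometric and applies directly to this configuration, so the detour through a trivial-character zeta function is unnecessary).

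The genuine gap is the appeal to \cite{Laeremans-Veys} for the local function $G=\prod_j y_j^{M_{i_j}}$. That theorem concerns zeta functions $\int|f|^s|dx|$ with the \emph{standard} Haar measure; its proof hinges on the blow-up identity $\nu_E=\sum(\nu_i-1)+c$ for the relative canonical divisor, and it is this identity (together with $N_E=\sum N_i+\mu$) that forces the common ratio to be a unit fraction. Your integral $\int_B|G|^{s'}|h^*(dx)|$ carries the extra Jacobian factor $\prod_j|y_j|^{\nu_{i_j}-1}$, so the ``$\nu$'s'' you attach to $\{y_j=0\}$ are the $\nu_{i_j}$ of the resolution of $f$, not of any resolution of $G$ (indeed $G$ is already normal crossings, with genuine resolution data $(M_{i_j},1)$). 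There is thus no instance of Laeremans--Veys that applies: a bare product $\prod_j(1-q^{-M_{i_j}s'-\nu_{i_j}})^{-1}$ with arbitrary $\nu_{i_j}$ can have a maximal-order pole with common ratio $\nu_{i_j}/M_{i_j}$ equal to any positive rational, so nothing forces $-ds_0=1/N'$. The paper closes this gap not by reducing to \cite{Laeremans-Veys} but by proving a direct geometric statement (Proposition~\ref{geometric}) that reruns the Laeremans--Veys induction on the blow-up sequence of $h$ while carrying the extra hypothesis $d\mid N_i$; at each creation step the formulas for $N_1$ and $\nu_1$ let one descend to an earlier configuration preserving both the common-ratio and the divisibility, and the process terminates at a strict transform with $\nu=1$ and $d\mid N$, yielding $r=1/(dk)$.
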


\begin{proof}
If $s_0$ is the real part of a pole of order $n$ of $Z(f,\chi,L,\Phi;s)$, then for any embedded resolution $h:Y\to \AA_L^n$ of $f^{-1}(0)$, there must exist $n$ different components $E_1,\dots,E_n$ of $h^{-1}(f^{-1}(0))$, satisfying the following, where we use notation as in subsection \ref{Denef's formula}.
There exists a point $P \in \cap_{i=1}^n E_i$ such that $h(P)\in Z$,  $s_0=  -\nu_1/N_1 = \dots = - \nu_n/N_n$,  and $d \mid N_i$ for  $i=1,\dots,n$.

This implies that $s_0= - \lct_{Z}(f)$ by the results in  \cite{Nicaise-Xu}. The second statement follows from the geometric result in Proposition \ref{geometric} below.
\end{proof}

\begin{prop}\label{geometric} Let $F$ be an algebraically closed field of characteristic zero.
 Let $D = \sum_i  N_iD_i$ be an effective divisor on a nonsingular $F$-variety $Y$ of
dimension $n$. Take an embedded resolution $h : X \to Y$  of $D$, constructed as a finite composition of admissible blow-ups, and let $E_i, i\in T,$ be the irreducible components of $(h^{-1}(D))_{\operatorname{red}}$. For each $i\in T$, let $N_i$ and $\nu_i -1$ be the multiplicities of $E_i$ in the divisor of $f\circ h$ and in the relative canonical divisor $K_h$, respectively.

Let $d$ be a positive integer.  Suppose
that there exist $n$ different $E_i, i\in I\subset T,$ such that $\cap_{i\in I} E_i \neq \emptyset$, all $\nu_i/N_i, i\in I,$  are equal, and $d \mid N_i$ for all $i\in I$. Then there exists a positive integer $k$ such that  $\nu_i/N_i=1/dk$ for $i\in I$.
\end{prop}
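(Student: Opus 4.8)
The plan is to reduce to a statement that is local at a point of the intersection and then to induct on the number of blow-ups of which $h$ is composed. Since $(h^{-1}(D))_{\operatorname{red}}$ has simple normal crossings and $\#I=n$, the intersection $\bigcap_{i\in I}E_i$ is a nonempty finite set of closed points; fix one, $P$. Near $P$ there are local coordinates $z_1,\dots,z_n$ with $E_i=\{z_i=0\}$ for $i\in I$, in which a local equation of the total transform $h^{*}D$ is $u\prod_{i\in I}z_i^{N_i}$ and a local generator of the relative canonical sheaf is $v\prod_{i\in I}z_i^{\nu_i-1}$ for units $u,v$. It thus suffices to prove: for any composition $h\colon X\to Y$ of admissible blow-ups and any closed point $P$ lying on exactly $n$ components $E_i$ $(i\in I)$ of $(h^{-1}(D))_{\operatorname{red}}$, which is SNC near $P$, if all ratios $\nu_i/N_i$ $(i\in I)$ equal a common value $c$ and $d\mid N_i$ for all $i\in I$, then $1/c$ is a positive integer divisible by $d$.

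The base case is $h=\operatorname{id}$: then each $E_i$ is a component of $D$, so $\nu_i=1$ and $N_i=1/c$ for all $i$; hence $1/c\in\ZZ_{>0}$, it is divisible by $d$ by hypothesis, and $k:=(1/c)/d$ works. For the inductive step write $h=h'\circ\pi$ with $\pi\colon X\to X'$ the last blow-up, of smooth admissible center $C\subset X'$ of codimension $c'\ge 2$, and $h'\colon X'\to Y$ a composition of fewer admissible blow-ups. If the exceptional divisor of $\pi$ misses $P$, then $\pi$ is an isomorphism near $P$, the numerical data at $P$ agree with those at $\pi(P)$, and we conclude by induction applied to $h'$. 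So assume the exceptional divisor $E$ of $\pi$ passes through $P$, say $E=E_{i_0}$ with $i_0\in I$. Pick coordinates $w_1,\dots,w_n$ at $P':=\pi(P)$ adapted to the blow-up chart around $P$, i.e.\ with $C=\{w_1=\dots=w_{c'}=0\}$ and $\pi$ given near $P$ by $w_1=z_1$, $w_j=z_1z_j$ for $2\le j\le c'$, and $w_j=z_j$ for $j>c'$. Substituting $z_j=w_j/w_1$ and using that $h^{*}D$ has normal crossings near $P$, one finds that near $P'$ the total transform $h'^{*}D$ and the relative canonical divisor $K_{h'}$ are again monomial, with components $\{w_j=0\}$, and that letting $a_1$ and $\nu'_{i_0}-1$ be the multiplicities of $\{w_1=0\}$ in $h'^{*}D$ and in $K_{h'}$ respectively gives
\[
N_{i_0}=a_1+\sum_{j\in J}N_j,\qquad \nu_{i_0}=\nu'_{i_0}+\sum_{j\in J}\nu_j,
\]
where $J\subseteq I\setminus\{i_0\}$ is the set of $c'-1$ indices $j$ for which $E_j$ is the strict transform of a component of $(h'^{-1}(D))_{\operatorname{red}}$ containing $C$.

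Feeding $\nu_i=cN_i$ $(i\in I)$ into these two identities gives $\nu'_{i_0}=c\,a_1$; since $\nu'_{i_0}\ge 1$ this forces $a_1\ge 1$, so $\{w_1=0\}$ is a genuine component of $(h'^{-1}(D))_{\operatorname{red}}$ through $P'$ with numerical ratio $c$, and moreover $d\mid a_1$ because $d\mid N_{i_0}$ and $d\mid N_j$ for $j\in J$. Hence $P'$ is a point on $X'$ lying on exactly $n$ components of $(h'^{-1}(D))_{\operatorname{red}}$ (the ``new'' one $\{w_1=0\}$ together with the $n-1$ predecessors of the $E_j$, $j\in I\setminus\{i_0\}$), which is SNC near $P'$, with all $n$ ratios equal to $c$ and all $N$-values divisible by $d$; since $h'$ uses strictly fewer blow-ups, the inductive hypothesis yields $1/c\in\ZZ_{>0}$ with $d\mid 1/c$, and the proof is complete.

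The main obstacle I expect is the coordinate bookkeeping in the inductive step: choosing coordinates at $P'$ simultaneously adapted to $C$ and to the blow-up chart containing $P$, verifying that $h'^{*}D$ and $K_{h'}$ stay monomial near $P'$, and reading off the multiplicities $a_1$, $\nu'_{i_0}$ along with the index set $J$. Admissibility of the blow-ups is used precisely here, to guarantee that $C$ meets the accumulated exceptional locus normally so that these monomial descriptions are available; and the point that makes the induction close is the inequality $a_1\ge 0$, forced by effectiveness of $h'^{*}D$ and then upgraded to $a_1\ge 1$ by the equal-ratio hypothesis, which ensures that the ``hidden'' component is actually present.
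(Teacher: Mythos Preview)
Your inductive framework is natural, and the formulas you write would work if $h'^{*}D$ were always monomial at $P'$; but this is precisely where the argument breaks. Admissibility only guarantees that the center $C$ has normal crossings with the \emph{exceptional} locus of $h'$, not with the full divisor $(h'^{-1}(D))_{\operatorname{red}}$: the strict transform of $D$ under $h'$ may still be singular at $P'$, or may be a smooth component not of the form $\{w_1=0\}$. In either event your identity $N_{i_0}=a_1+\sum_{j\in J}N_j$ omits the multiplicity $\mu$ of that strict transform along $C$, and your conclusion that $P'$ again lies on exactly $n$ SNC components of $(h'^{-1}(D))_{\operatorname{red}}$, all with ratio $c$ and $N$-value divisible by $d$, is unjustified.

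Concretely there are two scenarios, corresponding to the paper's cases (1) and (2). If no further exceptional component of $h'$ contains $C$, the correct identities are $\nu_{i_0}=1+\sum_{j\in J}\nu_j$ and $N_{i_0}=\mu+\sum_{j\in J}N_j$; the equal-ratio hypothesis then yields $c=1/\mu$ with $d\mid\mu$ directly, and no induction is needed (or possible, since $(h'^{-1}(D))_{\operatorname{red}}$ need not be SNC at $P'$). If instead one more exceptional component $E$ of $h'$ contains $C$, the identities give $c=\nu_E/(N_E+\mu)$ with $d\mid N_E+\mu$; thus the ratio for $E$ equals $c$ only after shifting $N_E$ by $\mu$, so your inductive hypothesis (which requires $\nu_E/N_E=c$) does not apply. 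The paper handles this by going back to the blow-up at which one of the $E_i$ was \emph{created} (rather than always peeling off the last blow-up), carrying along integer shifts $a_i$ so that $\nu_i/(N_i+a_i)=r$ is the invariant actually preserved, and replacing the newly created divisor by the extra exceptional $E$ at the next step. Your scheme can be repaired along the same lines, but as written the inductive step does not close.
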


\begin{proof} We adapt the argument of  \cite{Laeremans-Veys}. For completeness, we mention most details.

 Denote $r=\nu_i/N_i$ for $i\in I$.
If at least one $E_i, i \in I,$ is an irreducible component of the strict transform of
$D$, then clearly $r$ is $1/N_i$ with $d\mid N_i$. So from now on we suppose that all $E_i, i\in T,$  are exceptional
varieties.
At a certain step of the resolution process h one of the $E_i, i\in I,$  is created as the
exceptional variety of a blow-up and the other ones are strict transforms of previously
created exceptional varieties. We now consider the following situation ($\dagger$) of which this
step is a special case.

\begin{itemize}
\item[($\dagger$)]
Let $h_0 : X_1 \to X_0$ be a blow-up of $h$ with centre $C_0$ of codimension $c\geq 2$ in
$X_0$ and exceptional variety  $E_1 \subset X_1$. Suppose that
 there exists a point $P \in E_1$ belonging to $n$ different exceptional varieties of
$h$, say $P \in E_1 \cap \tilde{E}_2 \cap \dots \cap \tilde{E}_n$, where $\tilde{E}_j$ is the strict transform of $E_j\subset X_0$ for
$j = 2, \dots, n$, such that
\begin{itemize}
\item[(i)] there exist $a_1,a_2, \dots, a_n \in \ZZ$ satisfying
$$\frac{\nu_1}{N_1+a_1}=\frac{\nu_2}{N_2+a_2}=\dots=\frac{\nu_n}{N_n+a_n}=r \quad \text{(and all $N_i+a_i \neq 0$)} ,$$
    \item[(ii)] $ d \mid N_i+a_i$ \quad for $i=1,\dots,n$.
\end{itemize}
\end{itemize}

\noindent
Since $C_0$ has normal crossings with $\cup_{i=2}^n E_i$, it is easy to check that (after renumbering)
 $E_2, \dots, E_c \supset C_0$ and $E_{c+1}, \dots, E_n \not\supset C_0$.
So we are left with two possibilities:

(1) no other exceptional variety of $h$ contains $C_0$, or (say)

(2) also $E_{n+1} \supset C_0$.

\noindent
We claim that then

(1') $r = 1/dk$ for some positive integer $k$, and

(2') $r=  \frac{\nu_{n+1}}{N_{n+1}+a_{n+1}}$ for some $a_{n+1} \in \ZZ$, satisfying $d\mid  N_{n+1}+a_{n+1}$,

\noindent
respectively. Recall now the well-known equalities
$$N_1 = \sum_{i=2}^c N_i + \mu \  [+N_{n+1}] \quad\text{ and } \quad \nu_1 = \sum_{i=2}^c (\nu_i-1) + c \ [+\nu_{n+1}-1],
$$
where $\mu$ is the multiplicity of the generic point of $C_0$ on the strict transform of $D$ on
$X_0$, and in case (1) and (2) the terms within square brackets do not and do occur, respectively.
So
$$
r= \frac{\nu_1}{N_1+a_1} = \frac{\sum_{i=2}^c \nu_i + 1 \ [+\nu_{n+1}-1]}{\sum_{i=2}^c (N_i+a_i) + \mu + (a_1-\sum_{i=2}^c a_i ) \  [+N_{n+1}]},
$$
which implies that
$$
r= \frac{ 1 \ [+\nu_{n+1}-1]} {\mu + a_1-\sum_{i=2}^c a_i \  [+N_{n+1}]}.
$$
In case (1) we are done since $d$ then divides $\mu + a_1-\sum_{i=2}^c a_i = N_1+a_1 - \sum_{i=2}^{c} (N_i+a_i)$.
In case (2) we define  $a_{n+1}:= \mu + a_1-\sum_{i=2}^c a_i$. Then $r$ is of the stated form and, similarly, $d$ divides $N_{n+1}+a_{n+1}$. This proves our claim.

Now we can derive the statement in  the theorem by consecutive applications of our study of the situation ($\dagger$). Start with the blow-up of $h$ where $E_1$ is created. In case (1) we are done. In case (2) we obtain (using the notation above) $Q \in \cap_2^{n+1}E_i$ which induces by
(2') a new situation ($\dagger$). We can now repeat the same arguments until, by finiteness of
the resolution process, we encounter a case (1).
\end{proof}

\begin {thebibliography}{99}
\bibitem{Arnold.G.V.II} V. I. Arnold, S. M. Gusein-Zade, A. N. Varchenko, {\it Singularities of differentiable maps. {V}olume 2}, Birkh\"{a}user/Springer, New York  (1988).

\bibitem{Bombieri-Katz} E. Bombieri, N. Katz, {\it A note on lower bounds for {F}robenius traces}, Enseign. Math. {\bf 56}, No. 2 (2010), 203--227.

\bibitem{Bravo-Encinas-Villamayor}    A. Bravo, S. Encinas, O. Villamayor,  {\it A simplified proof of desingularization and applications},
Rev. Mat. Iberoamericana {\bf 21}, No. 2 (2005), 349--458.

\bibitem{WouterKien} W. Castryck, K. H. Nguyen, {\it New bounds for exponential sums with a non-degenerate phase polynomial}, J. Math. Pures Appl. {\bf 130}, No. 9 (2019), 93--111.

\bibitem{Saskia-Kien} S. Chambille, K. H. Nguyen, { \it Proof of the Cluckers-Veys conjecture on exponential sums for polynomials with log-canonical threshold at most a half}, Int. Math. Res. Not. (2019), doi.org/10.1093/imrn/rnz036.
			
\bibitem{CluckersIMRN} R. Cluckers, { \it Igusa's conjecture on exponential sums modulo $p$ and $p^{2}$ and the motivic oscillation index}, Int. Math. Res. Not. (2008),  
doi.org/10.1093/imrn/rnm118.
   
\bibitem{CluckersDuke} R. Cluckers, {\it  Igusa and Denef-Sperber conjectures on nondegenerate $p$-adic exponential sums}, Duke Math. J. {\bf 141}, No. 1 (2008), 205--216.

\bibitem{CluckersTAMS} R. Cluckers, {\it  Exponential sums: questions by Denef, Sperber, and Igusa}, Trans. Amer. Math. Soc. {\bf 362}, No. 7 (2010), 3745--3756.

\bibitem{Cluckers-Loeser-Inv} R. Cluckers, F. Loeser, {\it Constructible motivic functions and motivic integration}, Inventiones Mathematicae {\bf 173}, No. 1 (2008), 23--121.

\bibitem{Cluckers-Loeser-ann} R. Cluckers, F. Loeser, { \it Constructible exponential functions, motivic Fourier transform and transfer principle}, Annals of Mathematics  {\bf 171}, No. 2 (2010), 1011--1065.

\bibitem{CMN} R. Cluckers, M. Musta\c{t}\u{a}, K. H. Nguyen {\it  Igusa's conjecture for exponential sums: optimal estimates for nonrational singularities}, Forum Math. Pi  {\bf 7}, e3  (2019), 28p.

\bibitem{DeligneWeil1} P. Deligne, {\it La conjecture de Weil {I}}, Inst. Hautes \'etudes Sci. Publ. Math. {\bf 43} (1974), 273--307.

\bibitem{DeligneWeil2} P. Deligne, {\it La conjecture de Weil {II}}, Inst. Hautes \'etudes Sci. Publ. Math. {\bf 52} (1980), 137--252.

\bibitem{Deligneetalecoho} P. Deligne, {\it Cohomologie \'{e}tale}, Lecture Notes in Mathematics, Vol. 569,  S\'{e}minaire de g\'{e}om\'{e}trie alg\'{e}brique du Bois-Marie SGA $4\frac{1}{2}$, Springer-Verlag, Berlin (1977).

\bibitem{Denef87} J. Denef, {\it On the degree of Igusa's local zeta function}, Amer. J. Math. {\bf 109}, No. 6 (1987), 991--1008 .

\bibitem{Denef91} J. Denef, {\it Local Zeta Functions and Euler Characteristics}, Duke Math. J. {\bf 63}, No. 3 (1991), 713--721.

\bibitem{DenefBourbaki} J. Denef, {\it Report on Igusa's local zeta function}, S\'eminaire Bourbaki {\bf 1990/91} Exp. no. 741 (1991), 359--386.

\bibitem{Denef-Sperber} J. Denef, S. Sperber, {\it Exponential sums mod $p^{n}$ and Newton polyhedra}, Bull. Belg. Math. Soc.--Simon Stevin  (2001), 55--63.

\bibitem{Denef-Veys} J. Denef and W. Veys, { \it On the holomorphy conjecture for Igusa's local zeta function}, Proc. Amer. Math. Soc. {\bf 123} (1995), 2981--2988.

\bibitem{Durfee} A. Durfee, {\it Fifteen characterizations of rational double points and simple critical points}, Enseign. Math. {\bf 25}, No. 2 (1979), 131--163.


\bibitem{Hironaka} H. Hironaka, {\it Resolution of singularities of an algebraic variety over a field of characteristic zero. {I}}, Ann. of Math. {\bf 79}, No. 2 (1964), 109--203.


\bibitem{IgusaInvent} J. I. Igusa, {\it On certain representations of semi-simple algebraic groups and the arithmetic of the corresponding invariants. {I}}, Invent. Math. {\bf 12} (1971), 62--94.

\bibitem{IgusaNago} J. I. Igusa, {\it On the arithmetic of {P}faffians}, Nagoya Math. J. {\bf 47} (1972), 169--198.

\bibitem{IgusaNagoya} J. I. Igusa, {\it On a certain {P}oisson formula}, Nagoya Math. J. {\bf 53} (1974), 211--233.

\bibitem{IgusaCrell} J. I. Igusa, {\it Complex powers and asymptotic expansions {II}}, J. reine angew. Math. {\bf 278/279} (1975), 307--321.

\bibitem{IgusaTokyo} J. I. Igusa, {\it A {P}oisson formula and exponential sums}, J. Fac. Sci. Univ. Tokyo Sect. IA Math. {\bf 23} (1976), 223--244.

\bibitem{IgusaKyoto} J. I. Igusa, {\it Criteria for the validity of a certain {P}oisson formula}, Algebraic number theory ({K}yoto {I}nternat. {S}ympos., {R}es. {I}nst. {M}ath. {S}ci., {U}niv. {K}yoto, {K}yoto, 1976) (1977), 43--65.

\bibitem{Igusalecture} J. I. Igusa, {\it Lectures on forms of higher degree (notes by {S}. {R}aghavan)}, Tata Institute of Fundamental Research, Lectures on Math. and Phys. {\bf 59}, Springer-Verlag, Heidelberg-New York-Berlin (1978).

\bibitem{Katz} N. Katz, { \it Estimates for ``singular'' exponential sums}. Int. Math. Res. Not.  {\bf 16} (1999), 875--899.

\bibitem{Kowalski} E. Kowalski, { \it Some aspects and applications of the {R}iemann hypothesis over finite fields}, Milan J. Math {\bf 78} (2010), 179--220.

\bibitem{Laeremans-Veys} A. Laeremans, W. Veys, {\it On the poles of maximal order of the topological zeta function}, Bull. London Math. Soc. {\bf 31} (1999), 441--449.

\bibitem{Lich1} B. Lichtin, {\it  On a conjecture of {I}gusa}, Mathematika {\bf 59}, No. 2 (2013), 399--425.

\bibitem{Lich2} B. Lichtin, {\it  On a conjecture of {I}gusa {II}}, Amer. J. Math. {\bf 138}, No. 1 (2016), 201--249.

\bibitem{Marker}  D. Marker, {\it Model Theory: An introduction}, Graduate texts in mathematics, Springer-Verlag, Graduate Texts in Mathematics {\bf 217} (2002).

\bibitem{MustataJAMS} M. Musta\c t\u a, {\it  Singularities of pairs via jet schemes}, J. Amer. Math. Soc, {\bf 15}, No. 3 (2002), 599--615.

\bibitem{Nicaise-Xu} J. Nicaise, C. Xu, {\it Poles of maximal order of motivic zeta functions}, Duke Math. J. {\bf  165} (2016),  217--243.

\bibitem{SerreGAGA}  J.-P. Serre, {\it G\'{e}om\'{e}trie alg\'{e}brique et g\'{e}om\'{e}trie analytique}. (French) [Algebraic geometry and analytic geometry] Ann. Inst. Fourier {\bf 6} (1956), 1--42.

\bibitem{Shim} G. Shimura, {\it Reduction of algebraic varieties with respect to a discrete valuation of the basis field}, Amer. J. Math. {\bf 77}, No. 1 (1955), 134--176.

\bibitem{Temkin} M.  Temkin, {Functorial desingularization over $\QQ$: boundaries and the embedded case},
Israel J. Math. {\bf 224}, No. 1 (2018), 455--504.

\bibitem{Var} A. N. Varchenko, {\it Newton polyhedra and estimates of oscillatory integrals}, Funkcional. Anal. i Prilo\v{z}en. {\bf 10}, No. 3 (1976), 13--38.

\bibitem{Veys} W. Veys, {\it On the log canonical threshold and numerical data of a resolution in dimension 2}, Manuscripta Math. (2019), 10p.,  doi.org/10.1007/s00229-019-01145-6.

\bibitem{Wright} J. Wright, {\it On the Igusa conjecture in two dimensions}, Amer. J. Math. {\bf 142}, No. 4   (2020),  1193--1238.

\end {thebibliography}

\end{document}